\newtheorem{theorem}{Theorem}[section]
\newtheorem{lemma}[theorem]{Lemma}
\theoremstyle{definition}
\newtheorem{remark}[theorem]{Remark}
\newtheorem*{notation}{Notation}
\numberwithin{equation}{section}
\renewcommand{\l}{\lambda}
\newcommand{\RR}{\ensuremath{\mathbb{R}}}
\newcommand{\prtl}{\ensuremath{\partial}}
\newcommand{\supp}{\ensuremath{\mathrm{supp}}}
\newcommand{\veps}{\ensuremath{\varepsilon}}
\newcommand{\tubeh}{\mathcal{T}_{h^{1/2}}(\gamma)}
\newcommand{\e}{\varepsilon}
\title[Refined Kakeya-Nikodym averages in higher dimensions]{Refined and Microlocal Kakeya-Nikodym Bounds of Eigenfunctions in Higher Dimensions}
\thanks{The first author was supported in part by the National Science Foundation grant DMS-1301717, and the second by the National Science Foundation grant DMS-1361476.}
\author[M. D. Blair]{Matthew D. Blair}
\address{Department of Mathematics and Statistics, University of New Mexico, Albuquerque, NM 87131, USA}
\email{blair@math.unm.edu}
\author[C. D. Sogge]{Christopher D. Sogge}
\address{Department of Mathematics, Johns Hopkins University, Baltimore, MD 21093, USA}
\email{sogge@jhu.edu}
\begin{document}
\begin{abstract}
We prove a Kakeya-Nikodym bound on eigenfunctions and quasimodes, which sharpens a result of the authors \cite{blairsoggerefined} and extends it to higher dimensions.  As in the prior work, the key intermediate step is to prove a microlocal version of these estimates, which involves a phase space decomposition of these modes which is essentially invariant under the bicharacteristic/geodesic flow.  In a companion paper \cite{blairsoggetop}, it will be seen that these sharpened estimates yield improved $L^q(M)$ bounds on eigenfunctions in the presence of nonpositive curvature when $2 < q < \frac{2(d+1)}{d-1}$.
\end{abstract}
\maketitle

\section{Introduction and main results}
Let $(M,g)$ be a smooth, compact, connected, boundaryless Riemannian manifold of dimension $d \geq 2$.  Let $\Delta_g$ be the nonpositive Laplace-Beltrami operator which is self adjoint with respect to the Riemannian measure $dV_g$.  The compactness assumption ensures that the spectrum of $\Delta_g$ is discrete, so that there exists an orthonormal basis for $L^2(M)$ consisting of eigenfunctions $e_{\l}$ satisfying
\begin{equation}\label{classicalefcn}
-\Delta_g e_{\l} = \l^2 e_{\l}, \qquad\text{ or equivalently, }\qquad \sqrt{-\Delta_g} e_{\l} = \l e_{\l} .
\end{equation}
Given such an eigenfunction with $\lambda >0$, we set $h=1/\lambda$, obtaining a solution to the semiclassical problem $(h^2\Delta_g+1)e_{h^{-1}}=0$.

In this work, we are interested in $L^q$ bounds on eigenfunctions and quasimodes associated to the semiclassical operator $h^2\Delta_g+1$.  A family of functions $\{\psi_h\}$ either defined for $h$ in some subinterval $h\in(0,h_0]\subset (0,1]$ or a decreasing sequence in this interval tending to 0 will be considered to be an admissible family of quasimodes if
\begin{equation}\label{admissquasi}
\begin{gathered}
\|\psi_h\|_{L^2(M)}=1,\\
\|(h^2\Delta_g+1)^k\psi_h\|_{L^2(M)} = O(h^k), \qquad k=1, \dots, \left\lceil \frac{d}{d+1}+\frac{d-1}{2} \right\rceil.
\end{gathered}
\end{equation}
Eigenfunctions normalized in $L^2$ define admissible quasimodes, as are normalized functions in the range of $\mathbf{1}_{[1,1+h]}(h\sqrt{-\Delta_g})$.  In the classical notation \eqref{classicalefcn}, it is easily verified that $L^2$-normalized functions in the ranges of
\begin{equation}\label{classicalquasi}
\mathbf{1}_{[\lambda,\lambda+1/T]} (\sqrt{-\Delta_g}) \qquad \text{ and } \qquad \rho(T(\lambda-\sqrt{-\Delta_g}))
\end{equation}
define admissible quasimodes for $T =T(\lambda)\geq 1$ a nondecreasing function, and in the latter case $\rho\in \mathcal{S}(\RR)$.

In this work, the interest is in upper bounds on the norm of these eigenfunctions and quasimodes in $L^q(M)$ for $2<q <\frac{2(d+1)}{d-1}$.  In \cite{sogge88}, the second author proved that $\|\psi_h\|_{L^q(M)} = O(h^{-\delta(q)})$ where $\delta(q) = \max(\frac{d-1}{2}(\frac 12-\frac 1q), \frac{d-1}{2}-\frac dq)$ as $h \to 0$ (or equivalently $\lambda \to \infty$).  These bounds are universal in that they apply to any $(M,g)$ as above and any admissible quasimode as in \eqref{admissquasi}.  Moreover, if one considers the large classes of quasimodes given by functions in the range of $\mathbf{1}_{[1,1+h]}(h\sqrt{-\Delta_g})$ (or in the classical notation, $\mathbf{1}_{[\lambda,\lambda+1]} (\sqrt{-\Delta_g})$), it can be seen that this exponent cannot be improved.  Indeed, when $2<q\leq \frac{2(d+1)}{d-1}$ one can find functions $\psi_h$ in the range of these spectral projectors for which $|\psi_h(x)|$ is roughly constant in a tubular neighborhood of radius $\approx h^{1/2}$ about a geodesic segment when $2<q\leq \frac{2(d+1)}{d-1}$ and rapidly decreasing outside of this set so that $
\|\psi_h\|_{L^q(M)}\approx  h^{-\frac{d-1}{2}(\frac 12-\frac 1q)}.
$ When $\frac{2(d+1)}{d-1}\leq q \leq \infty$, there exist modes weakly decaying outside of a ball of radius $h$ which similarly saturate these bounds.

On the round sphere $\mathbb{S}^d$, it is well known that the spectrum of $\sqrt{-\Delta_{\mathbb{S}^d}}$ is of the form $\sqrt{k(k+d-1)} = k+O(\frac 1k)$, $k\in \mathbb{N}\cup\{0\}$.  Consequently, these spectral projectors in the previous paragraph are nearly the same as projections onto an eigenspace, and the $L^q(M)$ bounds are saturated by families of exact eigenfunctions, in particular the highest weight spherical harmonics when $2<q\leq \frac{2(d+1)}{d-1}$ and the zonal harmonics when $\frac{2(d+1)}{d-1}\leq q \leq \infty$.  However, the spectrum of $\sqrt{-\Delta_{\mathbb{S}^d}}$ and the dynamics of the geodesic flow on $\mathbb{S}^d$ are very unique and these universal bounds are not expected to be optimal for eigenfunctions in most other geometries (save those with elliptic closed geodesics or a full measure of periodic geodesics in the sense of \cite{soggezelduke}).  A problem of great interest is to determine geometric conditions which show that these universal bounds can be improved for eigenfunctions or even quasimodes of shrinking width with $T(\lambda)\nearrow \infty$ in \eqref{classicalquasi}.

This work develops a condition for obtaining improvements on the universal bounds when $2 <q < \frac{2(d+1)}{d-1}$.  Let $\varPi$ denote the space of unit length geodesic segments.  Given $\gamma \in \varPi$, define $\mathcal{T}_{\veps}(\gamma)$ to be its $\veps$-neighborhood
\begin{equation}\label{tubedef}
\mathcal{T}_{\veps}(\gamma)=\{x\in M: d_g(x,\gamma)< \veps\}.
\end{equation}
For $h>0$ we then define the Kakeya-Nikodym norm of a quasimode $\psi_h$ as
\begin{equation}\label{KNnorm}
\| \psi_h \|_{KN}^2 = \sup_{\gamma \in \varPi} h^{-\frac{d-1}{2}} \int_{\tubeh} |\psi_h|^2\,dV_g.
\end{equation}
It is also convenient to use the analogous norm without (approximte) averages
\begin{equation}\label{KNnormNonavg}
||| \psi_h |||_{KN}^2 = \sup_{\gamma \in \varPi} \int_{\tubeh} |\psi_h|^2\,dV_g.
\end{equation}
The norm thus depends on the frequency scale $h$ of the quasimode under consideration.  Note that if $\psi_h$ is rapidly decreasing outside a tube $\tubeh$, as in the case of the highest weight spherical harmonics, then $||| \psi_h |||_{KN} \approx 1$.  Otherwise, one expects better bounds to be satisfied.

The following is the primary result of this work.
\begin{theorem}\label{thm:mainthm}
Let $\psi_h$ be a member of an admissible family of quasimodes as in \eqref{admissquasi}.  If $d \geq 2$ and $\frac{2(d+2)}{d} <q <\frac{2(d+1)}{d-1}$, then
\begin{equation}\label{higherdimboundnavg}
\|\psi_h\|_{L^q(M)} \lesssim
h^{-\frac{d-1}{4}+\frac{d-1}{2q}} |||\psi_h|||_{KN}^{\frac{2(d+1)}{q(d-1)}-1} = h^{-\delta(q)}|||\psi_h|||_{KN}^{\frac{2(d+1)}{q(d-1)}-1}.
\end{equation}
Note that when $d=2$, this means that for $4 < q < 6$,
\begin{equation}\label{twodimboundnavg}
\|\psi_h\|_{L^q(M)} \lesssim h^{-\frac14 +\frac{1}{2q}}|||\psi_h|||_{KN}^{\frac 6q -1} = h^{-\delta(q)}|||\psi_h|||_{KN}^{\frac 6q -1} .
\end{equation}
Moreover, when $d=2$ and $q=4$, we also have that
\begin{equation}\label{twodimboundL4navg}
\|\psi_h\|_{L^4(M)} \lesssim h^{-\frac 18} \sqrt{|\log h|}\;|||\psi_h|||_{KN}^{\frac 12} = h^{-\delta(4)} \sqrt{|\log h|}\;|||\psi_h|||_{KN}^{\frac 12} .
\end{equation}
\end{theorem}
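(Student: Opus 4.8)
The plan is to derive Theorem~\ref{thm:mainthm} from a microlocal refinement of it, as announced in the abstract. Fixing $\la=h^{-1}$, partition the cosphere bundle $S^*M$ into ``tubes'' $\mathcal{T}^\nu$ of radius $\approx h^{1/2}$ in the $2d-2$ directions transverse to the geodesic flow and of length $\approx 1$ along it, the partition chosen to be essentially invariant under the flow for times $|t|\lesssim 1$, and let $\{Q^\nu\}$ be an associated microlocal partition of unity with $\sum_\nu Q^\nu\approx I$ on frequencies $\approx\la$ (this is the flow-invariant phase-space decomposition referred to in the abstract). The aim is the microlocal bound, for admissible quasimodes and $q$ in the range of the theorem,
\begin{equation*}
\|\psi_h\|_{L^q(M)} \;\lesssim\; h^{-\delta(q)}\Bigl(\sup_\nu \|Q^\nu\psi_h\|_{L^2(M)}\Bigr)^{\beta(q)},\qquad \beta(q):=\tfrac{2(d+1)}{q(d-1)}-1\in(0,1),
\end{equation*}
with an extra $\sqrt{|\log h|}$ in the borderline case $d=2$, $q=4$. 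Granting this, Theorem~\ref{thm:mainthm} follows immediately: modulo $O(h^\infty)$ errors each $Q^\nu\psi_h$ is concentrated in the spatial tube $\mathcal{T}_{Ch^{1/2}}(\gamma_\nu)$ about a unit geodesic segment, so $\|Q^\nu\psi_h\|_{L^2(M)}^2\lesssim\int_{\mathcal{T}_{Ch^{1/2}}(\gamma_\nu)}|\psi_h|^2\,dV_g+O(h^\infty)\lesssim|||\psi_h|||_{KN}^2$ after covering the $Ch^{1/2}$-tube by boundedly many $h^{1/2}$-tubes.

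To prove the microlocal bound I would first use the quasimode hypothesis \eqref{admissquasi} --- whose number of derivatives is exactly what is needed --- and the universal estimate $\|\psi_h\|_q\lesssim h^{-\delta(q)}$ to reduce to $\chi_\la\psi_h$, with $\chi_\la$ a self-adjoint approximate projector onto $[\la,\la+1]$ built from the wave group over a time interval of small fixed length (the error has $L^q$ norm $o(h^{-\delta(q)})$, acceptable once one assumes $|||\psi_h|||_{KN}\gtrsim h^N$, the opposite case being trivial by Sobolev embedding). After the Hadamard/Lax parametrix and a coordinate localization, $\chi_\la$ is a variable-coefficient oscillatory integral operator of Carleson--Sjölin type; decompose its frequency localization dyadically in angular distance to a reference direction, $\chi_\la=\sum_{h^{1/2}\le\theta\le1}\sum_{|\tau|\sim\theta}\chi_\la^\tau$. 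At each point one runs a bilinear dichotomy: either $\sum_\tau\chi_\la^\tau\psi_h$ is dominated by a single cap --- a narrow configuration whose worst case is concentration in one $h^{1/2}$-tube, hence controlled by $\sup_\nu\|Q^\nu\psi_h\|_{L^2}$ and the universal $L^q$ bound --- or it involves two caps separated by $\approx\theta$, where one invokes the variable-coefficient analogue of Tao's sharp bilinear restriction estimate for elliptic hypersurfaces: for $\theta$-separated $\theta$-caps $\tau,\tau'$,
\begin{equation*}
\|(\chi_\la^\tau f_1)(\chi_\la^{\tau'}f_2)\|_{L^{q/2}(M)}\;\lesssim\;\la^{2\delta(q)}\,\theta^{\kappa(q)}\,\|f_1\|_{L^2}\|f_2\|_{L^2},\qquad\kappa(q)>0,
\end{equation*}
which holds precisely for $q/2>\tfrac{d+2}{d}$, and at the endpoint $q/2=\tfrac{d+2}{d}$ only when $d=2$ (where the bilinear $L^2$ bound for the circle is classical). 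This is the source of the hypothesis $q>\tfrac{2(d+2)}{d}$ and, in the plane at $q=4$, of the $\sqrt{|\log h|}$ --- there the powers of $\theta$ cancel and the sum over the $\approx|\log h|$ dyadic scales diverges logarithmically.

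Combining the bilinear estimate with a near-orthogonality count for the products over separated caps, bounding the number of caps near a given $\tau$ and the quantity $\sup_{|\tau|\sim\theta}\|\chi_\la^\tau\psi_h\|_{L^2}$ in terms of $\theta$ and $\sup_\nu\|Q^\nu\psi_h\|_{L^2}$, and summing the resulting geometric series over the dyadic scales (generically dominated by one endpoint) produces the exponent $\beta(q)$. The main obstacle is the bilinear oscillatory integral estimate itself: one must carry out Tao-type induction-on-scales and wave-packet arguments in the genuinely variable-coefficient setting, uniformly over the angular scale $\theta$ and --- crucially --- over geodesic-flow time intervals of a \emph{fixed}, non-shrinking length, which is exactly what forces the tubes $\mathcal{T}^\nu$ to be aligned with the flow rather than with a fixed frame. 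A secondary difficulty is the multi-scale bookkeeping needed to guarantee that the \emph{only} configuration obstructing a gain in $|||\psi_h|||_{KN}$ is honest concentration in a single $h^{1/2}$-tube --- the Kakeya--Nikodym extremizer --- which is delicate especially when $d\ge3$, where $q/2<2$ and naive $L^{q/2}$-orthogonality of the products is unavailable; one also needs the comparison between the microlocal cutoffs and the spatial tube integrals defining $|||\cdot|||_{KN}$, furnished by the $O(h^\infty)$ off-tube decay of the $Q^\nu$.
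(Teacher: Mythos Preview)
Your outline is correct and follows essentially the same architecture as the paper: a flow-invariant phase-space decomposition at scale $h^{1/2}$, a Whitney organization of $(\psi_h)^2$ by dyadic separation scale, bilinear oscillatory-integral estimates at each scale, an almost-orthogonality argument to sum over pairs, and an optimization over the dyadic splitting point that produces the exponent $\beta(q)=\frac{2(d+1)}{q(d-1)}-1$ (with the logarithmic loss at $d=2$, $q=4$ arising exactly as you say). A few points of comparison are worth recording. First, the paper works in the Koch--Tataru--Zworski evolution-equation formalism rather than with the spectral projector: after microlocalizing to a conic sector, $h^2\Delta_g+1$ factors through a first-order operator $hD_t+P(t,x,hD_x)$, and the propagator $S(t,s)$ is a local $h$-FIO whose phase is manifestly of Carleson--Sj\"olin type; this replaces your Hadamard parametrix. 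Second, the paper's Whitney decomposition is in the full $2(d-1)$-dimensional transverse phase space (cubes in $(x,\xi)$), not in angular caps alone; the cutoffs $B_\nu^J(t)=S(t,0)B_\nu^J(0)S(0,t)$ have symbols in the critical class $S_{1/2}(1)$, and a dedicated Egorov theorem for that class (with quantitative off-support decay) is what makes your ``flow-invariant $Q^\nu$'' rigorous. Third---and this is the one substantive correction---you overstate the main obstacle: the variable-coefficient bilinear estimate is \emph{not} redone here via induction on scales but is quoted directly from Lee (together with an $\varepsilon$-removal lemma from the authors' earlier work), after a rescaling $x\mapsto 2^j x$ that normalizes the separation. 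The labor specific to this paper is rather the $S_{1/2}$ Egorov theorem and the almost-orthogonality step (your ``secondary difficulty''), the latter handled by explicit spatial and frequency weight operators and an interpolation among $L^1$, $L^2$, $L^\infty$.
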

Here $\delta(q) = \frac{d-1}{2}(\frac 12-\frac 1q)$ as in the classical bounds of the second author \cite{sogge88}.  The theorem thus furnishes a sufficient condition for improving on these classical bounds, as it means that improvements on the trivial $|||\psi_h|||_{KN} = O(1)$ bounds translates to improvements on the $L^q(M)$ bounds (see our discussion of \cite{blairsoggetop} below).

Expressing our main theorem in this manner allows for a good discussion of its consequences, which we do below and in our companion paper \cite{blairsoggetop}.  However, in proving the theorem, it is convenient to instead work with the averaged norms \eqref{KNnorm} and show following bounds which are equivalent to \eqref{higherdimboundnavg}, \eqref{twodimboundL4navg}:
\begin{equation}\label{higherdimbound}
\|\psi_h\|_{L^q(M)} \lesssim
h^{-\frac{d-1}{2}+\frac dq} \|\psi_h\|_{KN}^{\frac{2(d+1)}{q(d-1)}-1}, \qquad d \geq 2 ,
\end{equation}
\begin{equation}\label{twodimboundL4}
\|\psi_h\|_{L^4(M)} \lesssim \sqrt{|\log h|}\;\|\psi_h\|_{KN}^{\frac 12}, \qquad d = 2.
\end{equation}

In \cite{blairsoggerefined}, the authors proved the bound \eqref{twodimboundL4}, but with the factor $\sqrt{|\log h|}$ on the right replaced by $h^{-\veps}$, where $\veps>0$ can be taken to be arbitrarily small.  This is because that work actually proves lossless estimates involving averages of the mass over slightly thicker tubular neighborhoods $h^{1/2-\veps}$.  The present work thus sharpens this result and extends it to higher dimensions.  Estimates of this type extend results of Bourgain \cite{bourgain} and the second author \cite{soggekaknik} in two dimensions, and the authors \cite{blairsogge} in higher dimensions.  This latter set of results show that the universal $L^q$ bounds on eigenfunctions due to the second author \cite{sogge88} can be improved to $o(h^{-\frac{d-1}{2}(\frac 12-\frac 1q)})$ when $|||\psi_h|||_{KN}=o(1)$ and $2< q < \frac{2(d+1)}{d-1}$.

Obtaining lossless estimates involving tubes of diameter $h^{1/2}$ is significant for applications.  In a companion paper \cite{blairsoggetop}, it will be shown that
\[
|||\psi_h|||_{KN}=O(|\log h|^{-\sigma}), \qquad \text{for some $\sigma=\sigma(d) >0$},
 \]
for quasimodes in the range of
\begin{equation}
\rho(T(\lambda-\sqrt{-\Delta_g})) = \rho(Th^{-1}(1-h\sqrt{-\Delta_g})), \qquad h= 1/\lambda,
\end{equation}
as in \eqref{classicalquasi} with $T\approx \log\lambda = |\log h|$.  The small $h^{1/2}=\lambda^{-1/2}$ thickness of the tubes turns out to be significant in the proof of such bounds.  Combining these with Theorem \ref{thm:mainthm} thus yields a logarithmic gain in the $L^q$ bounds when $2 < q < \frac{2(d+1)}{d-1}$ for quasimodes in the range of
\begin{equation}
\mathbf{1}_{[\lambda,\lambda+(\log \lambda)^{-1}]}(\sqrt{-\Delta_g}) =
\mathbf{1}_{[1,1+h|\log h|^{-1}]}(h\sqrt{-\Delta_g}) .
\end{equation}
Namely, for such quasimodes we have
\[
\|\psi_h\|_{L^q(M)} \lesssim
h^{-\frac{d-1}{4}+\frac{d-1}{2q}} |\log h|^{-\sigma(\frac{2(d+1)}{q(d-1)}-1)}.\]
Moreover, the second author has shown \cite{soggeforthcoming} that averaging over these smaller tubes allows one to see that upper bounds on the $L^1(M)$ norm of allows one to detect scarring phenomena for eigenfunctions and quasimodes.  More precisely, it is observed that using H\"older's inequality,
\begin{equation*}
 |||\psi_h|||_{KN}^{-\frac{2(d+1)-(d-1)q}{(d-1)(q-2)}} \lesssim h^{-\frac{d-1}{4}}\|\psi_h\|_{L^1(M)} \qquad \mbox{for $\frac{2(d+2)}{d}<q<\frac{2(d+1)}{d-1}$}.
\end{equation*}
Thus if the right hand side is $\approx 1$, it can be seen that the mass of $\psi_h$ must concentrate in an $h^{1/2}$ tubular neighborhood about a geodesic segment.  Indeed, if the mass of $\psi_h$ were mostly concentrated in a $\tubeh$, then it is not hard to see that $\|\psi_h\|_{L^1(M)} \lesssim h^{\frac{d-1}{4}}$.  The main idea in \cite{soggeforthcoming} is to show these upper bounds on the $L^1(M)$ norm imply such concentration must occur.

The present work does not address how to improve upon the universal $L^q(M)$ bounds of the second author in the range $\frac{2(d+1)}{d-1} < q \leq \infty$ or when $q=\frac{2(d+1)}{d-1}$.  Improvements in the former range, based on the measure of closed loops in the cosphere spaces $S^*_xM$, appear in \cite{soggetothzelditch}, \cite{soggezelduke}, \cite{soggezelfocal} and stronger improvements for nonpositive curvature were shown in \cite{Berard}, \cite{HT}.  As alluded to above, any condition for improvements on the universal $L^q$ bounds in these cases must address the existence of modes which decay weakly outside of balls of radius $h$, and the norms in \eqref{KNnorm} above are insufficient for this purpose.  The results in \cite{HR} obtain a logarithmic gain in the universal estimates in the presence of negative curvature, for all $2 < q \leq \infty$ via an estimate at $q=\frac{2(d+1)}{d-1}$ and interpolation, but the methods rely on generalized Weyl laws, and hold only for a full density subsequence of eigenfunctions.

The discussion above shows that the exponent in the two dimensional bound \eqref{twodimboundnavg} is perhaps not so surprising as it would be the outcome of interpolating between the $L^4(M)$ bound (without the logarithmic factor) and the universal $L^6(M)$ bound of the second author.  The same idea holds for the higher dimensional bound \eqref{higherdimbound}: if this were to hold when $q=\frac{2d}{d-1}$ (so that the exponent of $h$ vanishes), then the estimates appearing here would be the result of interpolating between this bound and the classical bounds of the second author at $q=\frac{2(d+1)}{d-1}$.  However, the validity of  \eqref{higherdimbound} when $\frac{2d}{d-1} \leq q \leq \frac{2(d+2)}{d}$ and $d\geq 3$ could be subtle.  Indeed, when $d\geq 3$, the present work relies on bilinear estimates of Lee \cite{leebilinear} arising from the bilinear approach to the Fourier restriction problem that originated in works of Tao, Vargas, and Vega \cite{tvv}, Wolff \cite{wolff}, and Tao \cite{taobilinear}.  Explicit examples show that such bilinear estimates fail to hold in the regions $\frac{2d}{d-1} \leq q \leq \frac{2(d+2)}{d}$ (see \cite{tvv}).

\begin{remark}
The present work uses the semiclassical formalism of Koch, Tataru, and Zworski in \cite{ktzsemi}.  It is thus likely that Theorem \ref{thm:mainthm} can be generalized to semiclassical pseudodifferential operators $P(x,hD)$ with symbols $p(x,\xi)$ whose characteristic sets $\{\xi \in T^*_xM :p(x,\xi)=0\}$ have nonvanishing second fundamental form.  This is provided sets analogous to the $\tubeh$ above can be defined in a manner that allows for a substitute for Lemma \ref{lem:MKNandKN} below.  While we do not examine this issue in detail, our methods do easily treat semiclassical operators of the form $h^2\Delta_g+V$, $V \in C^{\infty}$ with $0< \inf_{x\in M} V(x)$, by passing to the ``Jacobi metric" $\tilde{g}(x):=V(x)g(x)$ (see \cite[p.228]{abrahammarsden}).  In this case, if one has an admissible quasimode defined by replacing $(h^2\Delta_g+1)\psi_h$ by $(h^2\Delta_g+V)\psi_h$ in \eqref{admissquasi}, then $\psi_h$ is a quasimode with respect to $h^2\Delta_{\tilde{g}}+1$ in that
\[
\left\|(h^2\Delta_{\tilde{g}}+1)^k \psi_h\right\|_{L^2(M)} = O(h^k), \text{ for }
k=1, \dots, \left\lceil \frac{d}{d+1}+\frac{d-1}{2} \right\rceil.
\]
Indeed, it is verified that the difference $V\cdot \Delta_{\tilde{g}} - \Delta_{g}$ is a first order differential operator on $M$, which implies that
Theorem \ref{thm:mainthm} applies in this setting, defining the sets $\tubeh$ with respect to $\tilde{g}$.
\end{remark}
\subsection*{Outline of the paper} In \S\ref{sec:semiclassical}, we review aspects of the semiclassical approach to $L^q$ bounds for eigenfunctions introduced in \cite{ktzsemi}.  We also prove an Egorov theorem for symbols in the critical ``$S_{1/2}(1)$" class that is needed in our work.  Section \ref{sec:MKNoutline} provides the main outline of the proof of Theorem \ref{thm:mainthm}, considering ``microlocal" Kakeya-Nikodym norms and reducing matters to showing certain almost orthogonality bounds and bilinear estimates in $L^{\frac q2}$.  The last two sections, \S\ref{sec:ao} and \S\ref{sec:bilinear}, treat these almost orthogonality bounds and bilinear estimates respectively.
\subsection*{Acknowlegement}  The authors are grateful to Steve Zelditch for several helpful conversations and to the anonymous referee for numerous thoughtful suggestions which improved the exposition in this work.

\section{Semiclassical analysis}\label{sec:semiclassical}
\subsection{Preliminary reductions}
Since $\|(h^2\Delta_g)^k \psi_h\|_{L^2(M)} =O(1)$ for the $k$ in \eqref{admissquasi}, elliptic regularity shows that $\|\psi_h\|_{H^{2k}_h(M)} =O(1)$ in the semiclassical Sobolev spaces.  Hence commutator estimates show that for a smooth bump function $\varphi$,
\[
\|(h^2\Delta_g + 1)^k (\varphi \psi_h)\|_{L^2(M)} = O(h^k) \text{ for }
k=1, \dots, \left\lceil \frac{d}{d+1}+\frac{d-1}{2} \right\rceil.
\]
Hence by multiplying $\psi_h$ by a bump function within a finite partition of unity, we may assume it is supported in a suitable coordinate chart given by a cube of sidelength $2\e$ centered at the origin in $\RR^d$, and in particular
\begin{equation}\label{epsdef}
\supp(\psi_h) \subset \left[-\frac{\e}{2},\frac{\e}{2}\right]^{d},
\end{equation}
for some $\e>0$ sufficiently small.  Moreover, we may assume that $g^{ij}(0) = \delta^{ij}$.

Next, we let $\chi$ be a suitable smooth cutoff to a neighborhood of the unit sphere such that for $k$ as in \eqref{admissquasi}
\begin{equation}\label{gainawaychar}
\|(1-\chi(hD))\psi_h\|_{L^2(M)} \lesssim \|(h^2\Delta_g +1)^k\psi_h \|_{L^2(M)} +h^k \|\psi_h\|_{L^2(M)}\lesssim h^k.
\end{equation}
Indeed, for a suitable choice of $\chi$, $h^2\Delta_g +1$ is elliptic on $\supp(1-\chi)$, so elliptic regularity bounds yield the first inequality and quasimode properties the second.  Taking $k=\left\lceil \frac{d}{d+1}+\frac{d-1}{2} \right\rceil$, semiclassical Sobolev embedding gives for $ 2< q <\frac{2(d+1)}{d-1}$
\begin{align*}
  \|(&1-\chi(hD))\psi_h\|_{L^q(M)}
  \lesssim h^{-(\frac d2-\frac dq)}\sum_{|\alpha|\leq 1}\|(hD)^\alpha (1-\chi(hD))\psi_h \|_{L^2(M)}\\
  &\lesssim h^{-\frac{d}{d+1}}\left(\|(h^2\Delta_g + 1)^k (1-\chi(hD))\psi_h \|_{L^2(M)} + \|(1-\chi(hD))\psi_h \|_{L^2(M)}\right)\\
  &\lesssim h^{k-\frac{d}{d+1}} \leq h^{\frac{d-1}{2}},
\end{align*}
where the second inequality follows from elliptic regularity (as in \cite[Lemma 2.6]{ktzsemi}) and the last follows from the observation \eqref{gainawaychar} above (for a slightly different choice of $\chi$) as the symbol of $[(h^2\Delta_g + 1)^k ,(1-\chi(hD))]$ is supported away from the unit sphere, at least up to $O(h^k)$ error.  Since we may cover $M$ with $O(h^{-\frac{d-1}{2}})$ tubular neighborhoods $\tubeh$, $\|(1-\chi(hD))\psi_h\|_{L^q(M)}$ is in turn bounded by the right hand side of the inequalities in Theorem \ref{thm:mainthm}.

Taking a further partition of unity in the frequency variable, we may alter $\chi(\zeta) $ so that it is supported in a conic set of the form
\begin{equation}\label{conesupp}
\{\zeta: -\zeta_1 \geq C|\zeta'|, |\zeta| \approx 1\}, \qquad \zeta'=(\zeta_2, \dots, \zeta_n),
\end{equation}
for some $C>0$ sufficiently large.  The main idea in \cite{ktzsemi} is that after this further microlocalization, we may write the characteristic surface $\sum_{ij}g^{ij}(z)\zeta_i\zeta_j = 1$ as a graph in the $\zeta'$ variables, which leads us to a evolution equation in the first variable.  Indeed, over \eqref{conesupp}, the principal symbol of $h^2\Delta_g +1$ can be written in the form
\begin{equation}\label{graph}
-\sum_{i,j=1}^d g^{ij}(z)\zeta_i\zeta_j +1 = -a(z,\zeta)(\zeta_1 + p(z,\zeta')), \quad (z,\zeta) \in \supp(\psi_h)\times \supp(\chi),
\end{equation}
with $a(z,\zeta) , p(z,\zeta') > 0$, by taking $\e>0$ sufficiently small in \eqref{epsdef} and the aperture of the cone sufficiently small in \eqref{conesupp}.  Moreover taking $\veps$ small above, we may assume that the restricted matrix $\{g^{ij}(z)\}_{i,j=2}^{d}$ is positive definite when $z \in [-\veps,\veps]^d$.

We now have that
\begin{align*}
(hD_{z_1}+P(z,hD_{z'}))(\chi(hD)\psi_h) = h f_h,\\
f_h := \frac{1}{h}b(z,hD)[h^2\Delta_g,\chi(hD)]\psi_h,
\end{align*}
for some symbol $b$ compactly supported in all variables.  In particular, we may assume that $\supp(b(\cdot,\zeta)) \subset [-\veps,\veps]^d$ for every $\zeta$.  Moreover, we have that
\begin{equation}\label{fhbound}
\|f_h\|_{L^2(\RR^d)} \lesssim \|\psi_h\|_{L^2(\RR^d)}
\end{equation}
and up to an error term which is $O(h^\infty)$ as an operator on $L^2$, the composition
\begin{equation}\label{fhoperator}
b(z,hD)[h^2\Delta_g,\chi(hD)]
\end{equation}
is an operator with a compactly supported symbol.

\begin{notation}
Since the operator with symbol $(\zeta_1 + p(z,\zeta'))$ is naturally an evolution equation, it is now convenient to take a different convention on the notation.  We will use $r,s,t$ to denote variables in $\RR$ and $w,x,y,\xi,\eta$ to denote those in $\RR^{d-1}$ so that expressions such as $(t,x)$ give variables in $\RR^d$. The variable $z$ will still denote those in $\RR^d$ on the few occasions this is needed. The relation for $\psi_h$ thus reads
\[
(hD_{t}+P(t,x,hD))\psi_h = h f_h
\]
where it is understood that $P(t,x,hD)$ (or $P(t)$ when the dependence on $x,h$ is not crucial) is a family of semiclassical pseudodifferential operators acting on the $x$ variables.  Keeping in line with these conventions, the symbols we consider in the remainder of this work will be quantized in the ``$x$" variables over $\RR^{d-1}$ and never in the first variable.  Note that given the restriction of $\chi$ above to a cone of sufficiently small aperture, we may also assume that for some smooth bump function $\tilde \chi$ with $\supp(\tilde\chi) \subset \{|\xi| \ll 1\}$ (so that the quantization is only in the $x$ variables)
\begin{equation}\label{frequencyerror}
 \|(1-\tilde\chi)(hD)\psi_h(t,\cdot)\|_{L^2} , \|(1-\tilde\chi)(hD)f_h(t,\cdot)\|_{L^2} = O(h^\infty), \quad |t |\leq \veps.
\end{equation}
\end{notation}

Let $\tilde{\psi}_h := \chi(hD)\psi_h$ and let $S(t,s)$ be the family of unitary operators satisfying
\[
(hD_{t}+P(t,x,hD))S(t,s) = 0, \qquad S(t,s)|_{t=s}  = I.
\]
Hence in vector valued notation, for $f_h \in L^2([-\veps,\veps]_s;L^2(\RR^{d-1}_x))$ and $t \in [-\veps,\veps]$,
\begin{equation}\label{duhamel}
\tilde{\psi}_h(t) = \int_{-\veps}^t S(t,s)f_h(s)\,ds.
\end{equation}
Let $\kappa_{t,s}(x,\xi)$ denote the time $t$ value of the integral curve determined by $H_{p_r}$, the Hamiltonian vector field of $p_r(x,\xi) = p(r,x,\xi)$ whose value at time $s$ is $(x,\xi)$.  Taking $\veps>0$ sufficiently small in \eqref{epsdef}, we may assume that $\kappa_{t,s}(x,\xi)$ defines a canonical transformation for $t,s \in [-\veps,\veps]$.  We denote the components of this map in $\RR^{d-1}\times\RR^{d-1}$ as $\kappa_{t,s}(x,\xi) = (x_{t,s}(x,\xi),\xi_{t,s}(x,\xi))$.   Moreover, standard construction (see e.g. \cite[\S 10.2]{zworski}) shows that there exists a phase function $\phi(t,s,x,\eta)$ and a smooth, compactly supported amplitude $a(t,s,x,\eta)$ such that
\begin{equation}\label{parametrix}
\left(S(t,s)f\right)(x) = \frac{1}{(2\pi h)^{d-1}} \iint e^{\frac ih (\phi(t,s,x,\eta)-\langle y ,\eta \rangle)} a(t,s,x,\eta) f(y)\,dy\,d\eta +E f,
\end{equation}
for $\supp(f) \subset [-\veps,\veps]^{d-1}$.  Here $E= E(t,s)$ satisfies $\|E(t,s)\|_{L^2 \to L^2} \lesssim_N h^{N}$ for any $N>0$ (i.e. it is ``$O(h^\infty)$") and given Sobolev embedding, it has a negligible contribution to the estimates in the present work.  Hence we will often make a slight abuse of notation, treating $S(t,s)$ as the same as this oscillatory integral operator.

We further recall that the phase function has the properties
\begin{equation}\label{generatingfunction}
\begin{gathered}
\prtl_t \phi(t,s,x,\eta) + p(t,x,d_x \phi(t,s,x,\eta)) =0,
\\
\kappa_{t,s}(y,\eta) = (x,\xi) \quad \text{if and only if} \quad \xi = d_x \phi(t,s,x,\eta),\; y =  d_\eta \phi(t,s,x,\eta).
\end{gathered}
\end{equation}
The operators $S(t,s)$ are thus semiclassical Fourier integral operators associated to the canonical transformations $\kappa_{t,s}(x,\xi)$.

It is implicit in the work of Koch, Tataru, and Zworski \cite{ktzsemi} that for $s$ fixed, the phase functions $\phi(t,s,x,\xi)$ are of Carleson-Sj\"olin type as defined in \cite[\S2.2]{soggefica}.  Indeed, for $t,s \in [\veps,\veps]$, we have that
\begin{equation}\label{genfcnnondeg}
d_x d_\xi \phi(t,s,x,\xi)   =I+ O( \veps).
\end{equation}
Moreover, it can be checked that
\begin{equation}\label{pgraph}
\left\{ \left(-p(t,x,d_x\phi(t,s,x,\xi)), d_x\phi(t,s,x,\xi)\right): \xi \in \supp(a(t,s,x,\cdot)) \right\}
\end{equation}
is an embedded hypersurface in $T_{(t,x)}^*\RR^{d}$ with nonvanishing principal curvatures, all of the same sign.  This follows as \eqref{genfcnnondeg} implies that this is a local reparameterization of the graph $\{(-p(t,x,\eta), \eta):|\eta| \leq \frac 12\}$ and this is a subset of the strictly convex hypersurface determined by the zero set of the left hand side of \eqref{graph}.

Stein's theorem on Carleson-Sj\"olin phases (see e.g. \cite[Theorem 2.2.1]{soggefica}) thus shows that
\begin{equation}\label{linearbounds}
\left( \int_{-\veps}^{\veps} \left\| S(t,s)g \right\|_{L^q(\RR^{d-1})}^q\,dt \right)^{\frac 1q} \lesssim h^{\frac{d-1}{2}(\frac 12+\frac 1q)-(d-1)}\|\mathscr{F}_h (g)\|_{L^2}\lesssim h^{-\frac{d-1}{2}(\frac 12-\frac 1q)}\|g\|_{L^2}
\end{equation}
Indeed, Stein's theorem treats the case $q = \frac{2(d+1)}{d-1}$, so that the first inequality follows from interpolation with H\"ormander's theorem \cite{HorOsc} for nondegenerate phase functions in $L^2(\RR^{d-1})$ (cf. \eqref{genfcnnondeg}).  The second inequality follows from the semiclassical Plancherel identity.  An application of Minkowski's inequality for integrals then shows that this yields the same linear bounds of the second author in \cite{sogge88}:
\begin{equation*}
\|\tilde{\psi}_h\|_{L^q(\RR^d)} \lesssim \left( \int_{-\veps}^{\veps} \left\| \int_{-\veps}^t S(t,s)f_h(s)\,ds \right\|_{L^q(\RR^{d-1})}^q\,dt \right)^{\frac 1q}\lesssim h^{-\frac{d-1}{2}(\frac 12-\frac 1q)}\|\psi_h\|_{L^2(M)}.
\end{equation*}

In our case, linear bounds of this type will play a role, but ultimately the key is bilinear estimates, specifically those due to H\"ormander \cite{HorOsc} when $d=2$ and S. Lee \cite{leebilinear} and the epsilon removal lemma of the authors \cite{blairsogge} when $d \geq 3$.  For now, we remark that the reductions above mean that it now suffices to prove estimates on
\begin{equation}\label{ehtildesquared}
\left\|(\tilde{\psi}_h(t))^2\right\|_{L^{\frac q2}([-\veps,\veps]_t\times\RR^{d-1}_x)} = \left\|\left(\int_{-\veps}^t S(t,s)f_h(s)\,ds\right)^2\right\|_{L^{\frac q2}([-\veps,\veps]_t\times\RR^{d-1}_x)},
\end{equation}
showing they are bounded by the square of the right hand sides in \eqref{higherdimbound}, \eqref{twodimboundL4}.

\subsection{An Egorov theorem for critical and near-critical symbols}
We conclude this section with an Egorov type theorem in the symbol class $S_{1/2-\delta}(1)$, $\delta \in [0,\frac 12]$ (see \cite[p.73]{zworski} for the notation) which are symbols of the form \eqref{thetasymbol} with $\theta = h^{1/2-\delta}$ below.  The $\delta =0$ case is considered critical as the usual symbolic calculus does not furnish terms of higher order in $h$.  For example, the usual stationary phase expansions one often uses to prove the calculus do not yield these higher order terms.  Consequently, one cannot apply the usual Egorov theorem to such symbols.

Here we show a version of the Egorov theorem for $h$-pseudodifferential operators (PDO) with symbols in these classes.  As observed in \cite[Ch.VIII,\S8]{taylorpdo} and \cite{christiansonmonodromy}, there are Egorov theorems for classical PDO in these critical symbol classes, but here we are interested in a more qualitative version of the theorem.  While we cannot show that conjugation by an $h$-Fourier integral operator (FIO) results in a strict propagation of support for the symbols via the associated canonical transformation, the symbol is rapidly decreasing on a scale of at least $h^{-1/2}$ in terms of the distance to this propagated region, which is enough for our applications.  The theorem not only applies to the operator $S(t,s)$ defined above, but to more general local $h$-FIO.
\begin{theorem}\label{thm:egorov}
Let $S$ be a local $h$-Fourier integral operator
\[
(Sf)(x) = \frac{1}{(2\pi h)^n}\iint_{\RR^{2n}} e^{\frac ih (\phi(x,\eta)-\langle y,\eta \rangle)} a(x,\eta) f(y)\,dyd\eta,
\]
for some compactly supported symbol $a \in S(1)$ over $\RR^{2n}$ and associated to a canonical transformation $\kappa:\RR^{2n} \to \RR^{2n}$ in that $\kappa(y,\eta) = (x,\xi)$ if and only if $\xi=d_x \phi(x,\eta)$ and $y=d_\eta \phi(x,\eta)$.  Furthermore, let $B$ be an $h$-pseudodifferential operator with symbol satisfying
\begin{equation}\label{thetasymbol}
\left| \prtl^\alpha b(y,\xi) \right| \lesssim_\alpha \theta^{-|\alpha|} \qquad \text{where } \theta = h^{\frac 12-\delta},\, \delta \in \mbox{$[0,\frac 12]$}.
\end{equation}
Then $C=SBS^*$ defines an $h$-pseudodifferential operator with symbol $c(x,\rho)$ also satisfying \eqref{thetasymbol}.  If in addition, $\supp(b) \subset\mathscr{D}$ for some compact set $\mathscr{D} \subset \RR^n$, then for every $N \geq 0$,
\begin{equation}\label{symbolbdwithkappa}
\left| \prtl^\alpha c(x,\rho) \right| \lesssim_{\alpha,N} \theta^{-|\alpha|} (1+h^{-1}\theta d(x,\rho;\kappa(\mathscr{D})))^{-N},
\end{equation}
with $d(x,\rho;\kappa(\mathscr{D}))$ denoting the distance to $\kappa(\mathscr{D})$, the image of $\mathscr{D}$ under $\kappa$.  Furthermore, for any $M>0$ there exists a symbol $c_M(y,\eta)$ satisfying \eqref{thetasymbol} such that $\supp(c_M) \subset \kappa(\mathscr{D})$ and for every $N \geq 0$,
\begin{equation}\label{compactapprox}
\left| \prtl^\alpha (c-c_M)(y,\eta) \right| \lesssim_{\alpha,N} \theta^{-|\alpha|} (h\theta^{-2})^M(1+h^{-1}\theta d(y,\eta;\kappa(\mathscr{D})))^{-N}.
\end{equation}
\end{theorem}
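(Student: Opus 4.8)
The plan is to realize $C = SBS^*$ as an oscillatory integral by composing the Schwartz kernels of $S$, $B$, and $S^*$, and then to extract a left symbol $c(x,\rho)$ through the standard FIO-composition reductions, carrying the $S_{1/2-\delta}(1)$ character of the amplitude through each step. With $K_S(x,y) = (2\pi h)^{-n}\int e^{\frac ih(\phi(x,\eta) - \langle y,\eta\rangle)}a(x,\eta)\,d\eta$, with $B$ in left quantization, and with $S^*$ the adjoint, one inner integration is exact and identifies the fiber variables of $B$ and $S^*$, leaving
\[
K_C(x,w) = \frac{1}{(2\pi h)^{2n}}\iiint e^{\frac ih\left(\phi(x,\eta) - \phi(w,\zeta) + \langle y,\zeta - \eta\rangle\right)} a(x,\eta)\,b(y,\zeta)\,\overline{a(w,\zeta)}\,d\eta\,dy\,d\zeta .
\]
Stationary phase in the $2n$ vertical variables $(\eta,y)$ — whose Hessian $\left(\begin{smallmatrix} d_\eta^2\phi & -I \\ -I & 0\end{smallmatrix}\right)$ is nondegenerate with unimodular determinant — collapses the integral onto the critical point $\eta = \zeta$, $y = d_\eta\phi(x,\zeta)$; a change of variables $\zeta \mapsto \rho$ adapted to $\phi$ (legitimate since $d_x d_\eta\phi$ is invertible, $\kappa$ being a canonical transformation) together with the usual passage from an amplitude $c(x,w,\rho)$ to a left symbol $c(x,\rho)$ then exhibits $C$ as an $h$-pseudodifferential operator. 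By the defining relation of $\phi$, the critical point is precisely $\kappa^{-1}(x,\rho) = (d_\eta\phi(x,\zeta),\zeta)$, so the leading symbol is $|a|^2\,b\circ\kappa^{-1}$, equal to $b\circ\kappa^{-1}$ modulo $O(h\theta^{-2})$ when one normalizes so that $SS^* = I + O(h^\infty)$ — the Egorov principle in this setting.

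Each step above applies stationary phase (or the asymptotics of a nonstationary oscillatory integral) to an amplitude involving the critical-class symbol $b$, so the crux is a version of these principles valid for amplitudes in $S_{1/2-\delta}(1)$, which I would isolate as a lemma: if $\Phi$ has a single nondegenerate critical point $v_c$ on a fixed neighborhood in $\RR^m$ and $A_h \in C_c^\infty$ obeys $|\partial^\alpha A_h| \lesssim \theta^{-|\alpha|}$, then after the Morse lemma straightens $\Phi - \Phi(v_c)$ to an exact quadratic form $\frac12\langle Av, v\rangle$, the integral equals $(2\pi h)^{m/2}|\det A|^{-1/2}e^{\frac{i\pi}{4}\operatorname{sgn}A}\bigl(e^{\frac{ih}{2}\langle A^{-1}D,D\rangle}\tilde A_h\bigr)(0)$ exactly, and expanding the Fourier multiplier produces $\sum_{j=0}^{M-1} h^j L_j\tilde A_h(0) + R_M$ with $L_j$ a fixed constant-coefficient operator of order $2j$, so that $h^j L_j\tilde A_h(0) = O((h\theta^{-2})^j) = O(h^{2\delta j})$ — a genuine gain per term for $\delta > 0$, merely $O(1)$ per term when $\delta = 0$ — and $R_M = O((h\theta^{-2})^M)$. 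Tracking the interaction with the external parameters $(x,\rho)$ — $v_c$ moves smoothly, and differentiations land on the phase or on the amplitude, the latter costing $\theta^{-1}$ each — then yields $|\partial^\gamma c(x,\rho)| \lesssim \theta^{-|\gamma|}$, i.e. $c \in S_{1/2-\delta}(1)$, which is already the first assertion of the theorem. The main obstacle is precisely this uniformity down to $\delta = 0$: at the critical exponent the stationary-phase expansion neither terminates nor improves in powers of $h$, so the usual symbolic calculus is unavailable and one must argue directly through the Morse reduction and the exact Gaussian representation, the remainder estimates being the delicate point.

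For the decay \eqref{symbolbdwithkappa}, observe that when $d := d(x,\rho;\kappa(\mathscr D)) > 0$ the amplitude factors through $b$ and so vanishes on a ball of radius $\gtrsim d$ about the critical point, whence $|\nabla_{(\eta,y)}\Phi| \gtrsim \min(d,1)$ throughout the support of the amplitude. An integration-by-parts argument — dyadically decomposing in $|v - v_c|$ when $d$ is small, and using at each scale that each derivative of $b$ costs only $\theta^{-1}$ — produces the gain $(1 + h^{-1}\theta d)^{-N}$ for every $N$; alternatively, when $d$ is bounded away from $0$ one notes that $b$ and all its derivatives vanish at $\kappa^{-1}(x,\rho)$, so each $L_j\tilde A_h(0)$ vanishes and $c$ reduces to the remainder $R_M = O((h\theta^{-2})^M)$. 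Combining the two gives the claimed bound on all of $d \gtrsim h/\theta$, for every $\alpha$ and $N$. Finally, for \eqref{compactapprox} take $c_M$ to be the partial sum of the stationary-phase expansion retaining all terms down to size $O((h\theta^{-2})^M)$: by Leibniz each such term is a finite sum of functions $(\partial^\beta b)\circ\kappa^{-1}$ times smooth factors, so $\supp c_M \subseteq \kappa(\supp b) \subseteq \kappa(\mathscr D)$ and $c_M \in S_{1/2-\delta}(1)$, while $c - c_M$ is the corresponding remainder, of size $O((h\theta^{-2})^M)$ and inheriting the $(1 + h^{-1}\theta d)^{-N}$ decay from the preceding argument. (When $\delta = 0$ the last assertion is immediate from \eqref{symbolbdwithkappa}, taking $c_M = |a|^2\,b\circ\kappa^{-1}$.)
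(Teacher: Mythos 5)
Your proposal identifies the correct core technical device, and it is the same one the paper uses: reduce the phase to a quadratic form, write the resulting Gaussian-phase integral exactly as $(e^{\frac{ih}{2}\langle A^{-1}D,D\rangle}\tilde A_h)(0)$, and invoke Taylor's formula in $h$ to produce the finite expansion $\sum_{k<M}(h\theta^{-2})^k\tilde c_k$ plus an integral remainder, rather than attempting the usual asymptotic expansion (which carries no information when $\delta=0$). The decay estimate via integration by parts off the critical set, and the choice of $c_M$ as the partial sum of terms (which are supported in $\kappa(\supp b)$ because they are built from derivatives of $b\circ\kappa^{-1}$ evaluated at the critical point), also match the paper in substance. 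Your observation that \eqref{compactapprox} is trivial when $\delta=0$ is correct, though a cleaner choice than $|a|^2\,b\circ\kappa^{-1}$ is simply $c_M=0$.

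Where you diverge is in the preliminary reduction, and this is where your route is looser than the paper's. The paper never passes through a compound amplitude $c(x,w,\rho)$: it writes the \emph{left symbol} directly as $c(x,\rho)=\int e^{\frac ih\langle\tilde x-x,\rho\rangle}K_C(x,\tilde x)\,d\tilde x$, a $4n$-dimensional oscillatory integral, and then uses an explicit (parameter-dependent) change of variables \eqref{wdef}--\eqref{taudef} that converts the phase \emph{globally} into the fixed bilinear form $\langle(\tilde x,w),(\sigma,\tilde\eta)\rangle$, whose matrix $Q=\left(\begin{smallmatrix}0&I\\ I&0\end{smallmatrix}\right)$ is independent of $(x,\rho)$. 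You instead do stationary phase in the internal $(\eta,y)$ variables, then invoke ``the usual passage from an amplitude to a left symbol.'' Two issues result. First, that passage is itself a Gaussian-phase reduction in the critical symbol class, so it needs the same Taylor-remainder argument; it cannot be waved through as ``usual.'' Second, your Morse lemma is local and makes the quadratic form $A$ depend on the external parameters $(x,\rho)$: every $\partial_{x,\rho}$ you apply to $c$ can hit $A^{-1}$ inside the exponential multiplier and inside the Morse diffeomorphism, creating extra terms whose bookkeeping is exactly what the paper's choice of a fixed $Q$ eliminates. You also need to separately dispose of the part of the integral away from the critical point before the Morse lemma applies; this is standard (the phase gradient there is $\gtrsim1$ and each integration by parts gains $h\theta^{-1}=h^{1/2+\delta}\to0$), but it should be said. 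None of these is a wrong turn — the approach can be made to work — but they are precisely the spots where the paper's more direct setup pays off, and your write-up leaves them as unacknowledged debts.
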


Tracing through the proof, it is seen that the implicit constants appearing in \eqref{symbolbdwithkappa} and \eqref{compactapprox} do not depend on $\delta$, but only on those in \eqref{thetasymbol} and the derivative bounds for $a$, $\kappa$, and $\phi$.  In this work, we are mainly interested applying this theorem in the $\delta=0$ case, but record the more general case as a point of interest.  We also stress that $B$ is defined by taking the standard quantization of the symbol $b(y,\xi)$.

\begin{proof}
The composition $C=SBS^*$ has a Schwartz kernel $K(x,\tilde{x})$ given by an oscillatory integral.  For sufficiently regular $f$, the compact support of the symbols ensures that we may write
\[
(Cf)(x) = \frac{1}{(2\pi h)^n} \int e^{\frac ih \langle x,\rho \rangle}\left( \int e^{\frac ih \langle \tilde{x}-x,\rho \rangle} K(x,\tilde{x})\,d\tilde{x} \right)\mathscr{F}_h(f)(\rho)\,d\rho.
\]
The expression in parentheses determines the symbol $c(x,\rho)$ and can be written as
\begin{multline*}
c(x,\rho)=\frac{1}{(2\pi h)^{3n}} \int_{\RR^{6n}}e^{\frac ih (\langle y-\tilde{y},\xi \rangle - \langle y,\eta \rangle + \langle \tilde{y}, \tilde{\eta} \rangle + \phi(x,\eta) - \phi(\tilde{x},\tilde{\eta}) + \langle x-\tilde{x},\rho \rangle)}\\
\times a(x,\eta)\overline{a(\tilde{x},\tilde{\eta})}b(y,\xi)
\,d(y,\tilde{y},\eta ,\tilde{\eta},\xi ,\tilde{x})
\end{multline*}
where the integrations here must be interpreted in the sense of distributions since the amplitude is independent of $\tilde{y}$.  A standard limiting procedure shows that this presents no difficulty, allowing us to integrate in the $\tilde{y}$ variable first, which yields the distribution $(2\pi h)^{n}\delta(\tilde{\eta}-\xi)$.  Hence $c(x,\rho)$ can be written as
\begin{equation*}
\frac{1}{(2\pi h)^{2n}} \int_{\RR^{4n}}
e^{\frac ih(\phi(x,\eta)-\phi(\tilde{x},\tilde{\eta}) + \langle \tilde{x}-x,\rho\rangle + \langle y, \tilde{\eta}-\eta\rangle )}
a(x,\eta)\overline{a(\tilde{x},\tilde{\eta})}b(y,\tilde{\eta})\,d\eta d \tilde{\eta}dy d\tilde{x},
\end{equation*}
and the integration here is no longer formal. We rewrite this as
\begin{multline*}
\frac{1}{(2\pi h)^{2n}} \int_{\RR^{4n}}
e^{\frac ih(\phi(x,\eta)-\phi(\tilde{x}+x,\tilde{\eta}+\eta) + \langle \tilde{x},\rho\rangle + \langle y, \tilde{\eta} \rangle )}\\\times
a(x,\eta)\overline{a(\tilde{x}+x,\tilde{\eta}+\eta)}b(y,\tilde{\eta}+\eta)\,d\eta d \tilde{\eta}dy d\tilde{x}.
\end{multline*}

For each $(x,\tilde{x},\tilde{\eta},\rho)$, we make a further change of variables $(y,\eta)\mapsto (w,\sigma)$ with
\begin{align}
w&= y-\int_0^1 d_\eta \phi(x+s\tilde{x},\eta+s \tilde{\eta})\,ds = y-d_\eta \phi(x+\tilde{x},\eta+\tilde{\eta}) + O(|(\tilde{x},\tilde{\eta})|)\label{wdef}\\
\sigma &= \rho - \int_0^1 d_x \phi(x+s\tilde{x},\eta + s \tilde{\eta})\,ds = \rho - d_x \phi(x,\eta) + O(|(\tilde{x},\tilde{\eta})|)\label{taudef}
\end{align}
so that the phase reads $\langle (\tilde{x},w),(\sigma,\tilde{\eta})\rangle$.  The invertibility of the mixed Hessian $d_x d_\eta \phi$ ensures that locally, $(y,\eta)\mapsto (w,\sigma)$ defines an invertible transformation.  The last expression in \eqref{wdef} means that we may write $y=d_\eta\phi(x+E_1, \eta + \tilde{\eta})$  where $E_1(x,\tilde{x},w, \sigma,\tilde{\eta},\rho)=O(|(w, \tilde{x}, \tilde{\eta})|)$ as $(w, \tilde{x}, \tilde{\eta}) \to 0$.  Hence
\[
 b(y,\eta + \tilde{\eta}) = b(d_\eta\phi(x+E_1, \eta + \tilde{\eta}), \eta + \tilde{\eta})
 = (b\circ \kappa^{-1})(x+E_1,d_x \phi(x+E_1, \eta + \tilde{\eta}))
\]
where $y,\eta$ are understood as functions of $(x,\tilde{x},w, \sigma,\tilde{\eta},\rho)$.  Moreover, the last expression in \eqref{taudef} means that $d_x \phi(x+E_1, \eta + \tilde{\eta}) = \rho + E_2$, with $E_2(x,\tilde{x},w, \sigma,\tilde{\eta},\rho) = O(|(w, \tilde{x}, \tilde{\eta},\sigma)|)$ near 0.  Up to $O(h^\infty)$ error, $c(x,\rho)$ can thus be rewritten as
\begin{equation}\label{stdform}
\frac{1}{(2\pi h)^{2n}} \int_{\RR^{4n}}
e^{\frac ih\langle (\tilde{x},w),(\sigma,\tilde{\eta}) \rangle}
A(x,\tilde{x},w, \sigma,\tilde{\eta},\rho)(b\circ \kappa^{-1})(x+E_1,\rho + E_2)\,d\eta d \tilde{\eta}dy d\tilde{x}
\end{equation}
where $A$ and its partial derivatives are uniformly bounded in $h$.

As observed in \cite[p.51]{zworski}, the phase function in \eqref{stdform} can be rewritten as
\[
\langle (\tilde{x},w),(\sigma,\tilde{\eta})\rangle =\frac 12\langle Q (\tilde{x},w,\sigma,\tilde{\eta}),(\tilde{x},w,\sigma,\tilde{\eta})\rangle,
\text{ where } Q= \begin{bmatrix} 0 & I\\ I & 0 \end{bmatrix} = Q^{-1}.
\]
At this point, it can be treated using the method in \cite[Theorem 3.13]{zworski}.  We momentarily treat $x,\rho$ as fixed, and let $u(\tilde{x},w,\sigma,\tilde{\eta}) $ be the amplitude in \eqref{stdform} formed by fixing these variables.  Let $\zeta\in \RR^{4n}$ denote variables which are dual to $(\tilde{x},w,\sigma,\tilde{\eta})$, then using the classical Fourier transform, \eqref{stdform} can be rewritten as
\[
J(h,u)=\frac{1}{(2\pi)^{4n}} \int_{\RR^{4n}} e^{-\frac {ih}2 \langle Q\zeta, \zeta \rangle} \widehat{u}(\zeta) \,d\zeta
.
\]
Applying Taylor's formula in $h$ we have for every $M \in \mathbb{N}$, the differential operator $\widetilde{P}=-i\langle D_{(\tilde{x},w)}, D_{(\sigma,\tilde{\eta})}\rangle$ can be used to write
\begin{equation}\label{Jhuexpansion}
 J(h,u) = \sum_{k=0}^{M-1}\frac{h^k}{k!} J(0,\widetilde{P}^k u) + \frac{h^M}{(M-1)!}\int_0^t (1-t)^{M-1}J(th,\widetilde{P}^Mu)\,dt.
\end{equation}
The sum in $k$ can be rewritten as $\sum_{k=0}^{M-1}(h\theta^{-2})^k\tilde{c}_k (x,\rho)$ with $|\prtl^\alpha \tilde{c}_k | \lesssim \theta^{-|\alpha|},$ and $\supp(c_k) \subset \supp(b\circ \kappa^{-1}) = \kappa(\supp(b)).$

A change of variables $\zeta \mapsto t^{-\frac 12}\zeta$ in the remainder $J(th,\widetilde{P}^Mu)$ yields
\begin{equation}\label{statphaseremainder}
 J(th,\widetilde{P}^Mu) = \frac{1}{(2\pi \sqrt{t})^{4n}} \int_{\RR^{4n}} e^{-\frac {ih}2 \langle Q\zeta, \zeta \rangle} \widehat{\widetilde{P}^M u}(t^{-\frac 12}\zeta) \,d\zeta\\
\end{equation}
We are thus led to study integrals of the following form, for $t \in (0,1]$
\begin{equation}\label{modelamplitude}
\tilde{c}_t(x,\rho)=\frac{1}{(2\pi h)^{2n}} \int_{\RR^{4n}} e^{\frac ih \langle (\tilde{x},w),(\sigma,\tilde{\eta})\rangle }
\widetilde{A}(x,t^{\frac 12}\tilde{x},t^{\frac 12}w,t^{\frac 12}\sigma,t^{\frac 12}\tilde{\eta},\rho) d\tilde{x} dw d\sigma d\tilde{\eta}.
\end{equation}
for some amplitude $\widetilde{A}(x,\tilde{x},w,\sigma,\tilde{\eta},\rho)$ satisfying $|\prtl^\alpha \widetilde{A}|\lesssim_\alpha \theta^{-|\alpha|}$ and
\begin{multline*}
\{(x,\tilde{x},w,\sigma,\tilde{\eta},\rho) :\widetilde{A}(x,\tilde{x},w,\sigma,\tilde{\eta},\rho) \neq 0 \}\\ \subset \{(x,\tilde{x},w,\sigma,\tilde{\eta},\rho) :(b \circ \kappa^{-1})(x+E_1,\rho+E_2) \neq 0\} .
\end{multline*}
Indeed, the identity in \eqref{statphaseremainder}, combined with the Fourier transform, shows that the remainder term in \eqref{Jhuexpansion} can be expressed as
\[
\frac{(h\theta^{-2})^M}{(M-1)!} \int_0^1 \tilde{c}_t(x,\rho)(1-t)^{M-1}\,dt
\]
for some integral $\tilde{c}_t(x,\rho)$.  Moreover, \eqref{stdform} is of the form \eqref{modelamplitude} with $t=1$, so studying these integrals provides a unified approach to \eqref{symbolbdwithkappa} and \eqref{compactapprox}.

We now show that $(\theta D_{x,\rho})^\alpha \tilde{c}_t(x,\rho)$ has the effect of replacing the amplitude in the definition by one of the same regularity.  By induction, it suffices to prove this for $|\alpha|=1$.  This is clear for any such weighted derivative applied to the amplitude, hence it suffices to consider the effect of such derivatives on the phase. Recall that $\tilde{x}, \tilde{\eta}$ are independent of $(x,\rho)$ even though $(w,\sigma)$ are not.  Hence
\[
\theta D_{x_i}e^{\frac ih \langle (\tilde{x},w),(\sigma,\tilde{\eta})\rangle} = \left( \prtl_{x_i}\sigma \cdot(\theta D_\sigma) +
\prtl_{x_i}w \cdot(\theta D_w)\right)e^{\frac ih \langle (\tilde{x},w),(\sigma,\tilde{\eta})\rangle},
\]
and integration by parts completes the proof of the claim.  Derivatives in $\rho$ are handled analogously.

To see \eqref{symbolbdwithkappa}, \eqref{compactapprox} with $N=0$, it remains to show that $|\tilde{c}_t(x,\rho)|$ is uniformly bounded in $h$.  Note that the first order differential operator
\begin{equation}\label{firstorderdo}
\frac{1+(\sigma,\tilde{\eta},\tilde{x},w)\cdot D_{(\tilde{x},w,\sigma,\tilde{\eta})}}{1+h^{-1}|(\sigma,\tilde{\eta},\tilde{x},w)|^2}
=\frac{1+(h^{-\frac 12}(\sigma,\tilde{\eta},\tilde{x},w))\cdot(h^{\frac 12}D_{(\tilde{x},w,\sigma,\tilde{\eta})})}{1+h^{-1}|(\sigma,\tilde{\eta},\tilde{x},w)|^2}
\end{equation}
preserves the exponential factor in \eqref{modelamplitude}.  Writing the differential operator in the second fashion illustrates that the transpose of this operator applied to the class of amplitudes here yields an amplitude of the same regularity (in that there is no loss in $h$).  Integration by parts sufficiently many times yields an amplitude which is $O((1+h^{-1}|(\sigma,\tilde{\eta},\tilde{x},w)|^2)^{-4n})$, so that integration in these variables counterbalances the loss of $h^{-2n}$ in front of \eqref{modelamplitude}.  Alternatively, one could verify \eqref{symbolbdwithkappa}, \eqref{compactapprox}  when $N=0$ by changing variables $(\tilde{x},w,\sigma,\tilde{\eta}) \mapsto h^{1/2}(\tilde{x},w,\sigma,\tilde{\eta})$ from the outset in \eqref{modelamplitude}.

To show \eqref{symbolbdwithkappa}, \eqref{compactapprox} for $N\geq 1$, we observe that if $d(x,\rho;\kappa(\mathscr{D})) \geq h\theta^{-1}$, then
\begin{equation}\label{ctdecay}
 |c_t(x,\rho)| \lesssim_N \left(h^{-1}\theta d(x,\rho;\kappa(\mathscr{D}))\right)^{-N}.
\end{equation}
Indeed, since $(b \circ \kappa^{-1})(x+E_1,\rho+E_2) \neq 0$ over the domain of integration in \eqref{modelamplitude}, our previous observation that $E_1,E_2 = O(|(\tilde{x},w,\sigma,\tilde{\eta})|)$ means that for such $(x,\rho)$,
\begin{equation*}
 h\theta^{-1}\leq  d(x,\rho;\kappa(\mathscr{D})) \leq |(E_1,E_2)| \lesssim |t^{\frac 12} (\tilde{x},w,\sigma,\tilde{\eta})| \lesssim |(\tilde{x},w,\sigma,\tilde{\eta})|,
\end{equation*}
Hence the phase function in \eqref{modelamplitude} has no critical points, and each integration by parts with respect to $D_{(\tilde{x},w,\sigma,\tilde{\eta})}$ yields a gain of at least $O(h\theta^{-1}|(\tilde{x},w,\sigma,\tilde{\eta})|^{-1})$.  Integrating by parts sufficiently many times with this operator followed by a successive integration by parts using the operator in \eqref{firstorderdo} thus implies \eqref{ctdecay}.
\end{proof}

\begin{remark}
It is interesting to note that if one does not assume $\supp(b)\subset \mathscr{D}$, but instead the weaker decay condition
\[
\left| \prtl^\alpha b(x,\xi) \right| \lesssim_{\alpha,N} \theta^{-|\alpha|} (1+\theta^{-1} d(x,\xi;\mathscr{D}))^{-N},
\]
then the symbol of $c$ satisfies the same bound, but with $\mathscr{D}$ replaced by $\kappa(\mathscr{D})$.  This is a matter of replacing the differential operator in \eqref{firstorderdo} by
\[
\frac{1+(\theta^{-1}(\sigma,\tilde{\eta},\tilde{x},w))\cdot(h\theta^{-1} D_{(\tilde{x},w,\sigma,\tilde{\eta})})}{1+\theta^{-2}|(\tilde{x},w,\sigma,\tilde{\eta})|^2}
\]
and proceeding in a similar fashion to the $t=1$ case in the analysis of \eqref{modelamplitude} above.
\end{remark}

\section{Microlocal Kakeya-Nikodym Norms and the Outline of the Proof}\label{sec:MKNoutline}
Let $J$ be the negative integer satisfying $2^{J} \geq \sqrt{h} >2^{J-1}$. Given $\nu \in 2^J\mathbb{Z}^{2(d-1)}$, let $b_\nu^J$ be a smooth bump function such that for some $\tilde \veps>0$ sufficiently small
\begin{equation}\label{bjsupp}
\begin{gathered}
\supp(b_\nu^J)\subset \nu + \left[-\left(\frac 12+\tilde\veps\right)2^J,\left(\frac 12+\tilde\veps\right) 2^J\right]^{2(d-1)},
\\
\sum_{\nu \in 2^J\mathbb{Z}^{2(d-1)} } b_\nu^J(x,\xi) = 1 \quad \text{and} \quad \left|\prtl^\alpha_{y,\eta} b_\nu^J (y,\eta)\right| \lesssim_{\alpha} 2^{-J|\alpha|} \approx h^{-\frac{|\alpha|}{2}}.
\end{gathered}
\end{equation}
We then define $B_\nu^J(0)$ (to be extended to a family of operators $\{B_\nu^J(t) \}_{t \in [-\veps,\veps]}$), to be the semiclassical $h$-pseudodifferential operator with symbol $b_\nu^J \in S_{1/2}(1)$
\begin{equation}\label{bdef}
(B_\nu^J(0)g)(x)=\frac{1}{(2\pi h)^{d-1}}\iint_{\RR^{2(d-1)}} e^{\frac ih \langle x-y,\eta\rangle}b_\nu^J\left(x,\xi\right)g(y)\,dyd\eta.
\end{equation}
We remark that by a standard rescaling argument and the Calder\'on-Vaillancourt theorem, $B_\nu^J(0)$ is bounded on $L^2(\RR^{d-1}_x)$ (as is any operator with symbol in $S_{1/2}(1)$).  Extend $B_\nu^J(0)$ to a family of operators by defining
\[
B_\nu^J(t)= S(t,0) B_\nu^J(0)S(0,t).
\]

Given $g\in L^2([-\veps,\veps]_s;L^2(\RR^{d-1}_x))$, let $T$ denote the operator defined in vector valued notation
\[
(Tg) (t)= \int_{-\veps}^t S(t,s)g(s)\,ds, \text{ that is, } (Tg)(t,x) = \int_{-\veps}^t \big(S(t,s)g(s,\cdot)\big)(x)\,ds,
\]
so that $g(s),(Tg) (t) \in L^2(\RR^{d-1}_x)$ for $t,s\in[-\veps,\veps]$.  As suggested in \eqref{ehtildesquared}, the main strategy for proving the estimates in \eqref{higherdimbound}, \eqref{twodimboundL4} will be to prove $L^{q/2}$ bounds on $(\tilde{\psi}_h)^2$, leading us to consider $(Tf_h)^2 $.  Since $f_h(s) = \sum_\nu B_\nu^{J}(s)f_h(s)$, we may write
\begin{align}
(Tf_h(t))^2 &= \sum_{\nu,\nu'} \left( \int_{-\veps}^t S(t,s)B_\nu^J(s)f_h(s)\,ds\right)\left( \int_{-\veps}^t S(t,r)B_{\nu'}^J(r)f_h(r)\,dr\right)\notag\\
&= \sum_{\nu,\nu'} T(B_{\nu}^Jf_h)(t) T(B_{\nu'}^Jf_h)(t)\label{nusum}
\end{align}
where $B_{\nu}^Jf_h$ abbreviates the vector valued function $s\mapsto B_{\nu}^J(s)f_h(s)$.

We next consider the family of dyadic cubes in $\RR^{2(d-1)}$, $\tau^j_\mu$ formed by taking the cube $[-2^{j-1},2^{j-1})^{2(d-1)}$, and translating by an element $\mu \in 2^j \mathbb{Z}^{2(d-1)}$.  As in \cite{tvv}, two dyadic cubes are said to be \emph{close}, writing $\tau^j_\mu \sim \tau^j_{\mu'}$, if they are not adjacent, but have adjacent parents of sidelength $2^{j+1}$.  We use this to organize the sum in \eqref{nusum}: given a pair $(\nu,\nu')$ such that $2^{-J}|\nu-\nu'|$ is sufficiently large, there exist close dyadic cubes $\tau^j_\mu \sim \tau^j_{\mu'}$ such that $\nu \in \tau^j_\mu$ and $\nu' \in\tau^j_{\mu'}$ (which means $|\nu-\nu'|\approx 2^j$), allowing us to write
\begin{multline}\label{whitneyorg}
\sum_{\nu,\nu'} T(B_{\nu}^Jf_h) T(B_{\nu'}^Jf_h) =  \sum_{(\nu,\nu')\in \Xi_J} T(B_{\nu}^Jf_h) T(B_{\nu'}^Jf_h)+\\
\sum_{j=J+1}^{0} \sum_{(\nu,\nu') \in \tau^j_\mu \times \tau^j_{\mu'}: \tau^j_\mu\sim \tau^j_{\mu'}} T(B_{\nu}^Jf_h) T(B_{\nu'}^Jf_h) + O(h^\infty),
\end{multline}
where we let $\Xi_J$ denote pairs $(\nu,\nu')$ lying in adjacent cubes.  Note that by \eqref{frequencyerror}, we may limit ourselves to cubes of distance less than 1 at the cost of the negligible $O(h^\infty)$ error term on the right. When $0\leq j \leq J+1$, we define
\[
\Xi_j:=\{(\mu,\mu'):\tau^j_\mu \sim \tau^j_{\mu'}\},
\]
noting the slight variation in these definitions.  For $\mu \in 2^j\mathbb{Z}^{2(d-1)}$, we define $B_\mu^j(0)$ as the $h$-PDO with symbol
\[
b_\mu^j(y,\eta) := \sum_{\nu \in \tau^j_\mu} b_{\nu}^J(y,\eta).
 \]
For convenience, set
\begin{equation}
  \mathscr{D}_\mu^j := \supp(b_\mu^j).
\end{equation}
Taking $\tilde \veps$ small in \eqref{bjsupp}, we have for $(\mu,\mu')\in \Xi_j$, we have
\begin{equation}\label{bjseparation}
d\big(\mathscr{D}_\mu^j ,\mathscr{D}_{\mu'}^j \big)\approx 2^j, \qquad 0 \geq j \geq J+1.
\end{equation}
Also define
\[
B_\mu^j(s) := \sum_{\nu \in \tau^j_\mu} B_{\nu}^J(s)=S(s,0)B_\mu^j(0)S(0,s),
\]
and denote the symbol of $B^j_\mu(s)$ as $b_\mu^j(s,y,\eta)$.  Given Theorem \ref{thm:egorov},
\begin{equation}\label{actualdecayprep}
\left| \prtl^\alpha_{y,\eta} b_\mu^j(s,y,\eta)\right| \lesssim_{\alpha,N} h^{-\frac{|\alpha|}{2}} \left(1+h^{-\frac 12}d\left(y,\eta;\kappa_{s,0}\left( \mathscr{D}_\mu^j\right)\right)\right)^{-N},
\end{equation}
and the same holds when $j=J$.

With this notation, the first sum on the right in \eqref{whitneyorg} can be rewritten as
\begin{equation}\label{whitneyorg2}
\sum_{j=J+1}^{0} \sum_{(\mu, \mu') \in \Xi_j} T(B_{\mu}^jf_h) T(B_{\mu'}^jf_h).
\end{equation}
Hence $\|(\tilde{\psi}_h)^2\|_{L^{\frac q2}([-\veps,\veps]_t \times \RR^{d-1}_x)} $ is bounded above by
\begin{equation}\label{whitneyorgnorm}
\sum_{j=J+1}^0 \left\|\sum_{(\mu, \mu') \in \Xi_j } T(B_{\mu}^jf_h) T(B_{\mu'}^jf_h)\right\|_{L^{\frac q2}} + \left\|\sum_{(\nu,\nu')\in \Xi_J} T(B_{\nu}^Jf_h) T(B_{\nu'}^Jf_h)\right\|_{L^{\frac q2}}.
\end{equation}

To estimate \eqref{whitneyorgnorm}, we follow the strategy in \cite{blairsoggerefined}, first proving almost orthogonality of the $T(B_{\mu}^jf_h) T(B_{\mu'}^jf_h)$ for $(\mu, \mu') \in \Xi_j$, then using bilinear estimates to bound each product.  However, to make this rigorous, we will need to instead consider a family of $h$-PDO for $|s| \leq \veps$ where $\veps$ is as in \eqref{epsdef}
\begin{equation}\label{Bfamily}
\left\{ B_{\mu,\omega}^j(s): \omega \in \Omega, s\in [-\veps,\veps]\right\}
\end{equation}
where $\Omega$ is a finite index set and each corresponding symbol $b_{\mu,\omega}^j(s,y,\eta)$ in the family satisfies the same bound as in \eqref{actualdecayprep}
\begin{equation}\label{actualdecay}
\left| \prtl^\alpha_{y,\eta} b_{\mu,\omega}^j(s,y,\eta)\right| \lesssim_{\alpha,N} h^{-\frac{|\alpha|}{2}} \left(1+h^{-\frac 12}d\left(y,\eta;\kappa_{s,0}\left(\mathscr{D}_{\mu}^j\right)\right)\right)^{-N},
\end{equation}
and for each $\alpha$ and $N$, the implicit constants here are uniformly bounded in $\mu,\omega,j$.  This collection will be defined below.  A $B_{\mu,\omega}^j(s)$ can thus be viewed as akin to a $B_{\mu}^j(s)$, but whose symbol as been distorted (though compact support of the symbol may be lost when $s=0$).

Before getting to the crux of our argument, we prove two lemmas.
\begin{lemma}\label{lem:aosums}
Suppose $\omega_\mu\in \Omega$ is any sequence parameterized by $\mu \in 2^j\mathbb{Z}^{2(d-1)}$. Then for any $J \leq j \leq 0$,
\begin{equation}\label{aoomega}
\sum_\mu \left\| B_{\mu,\omega_\mu}^j f_h\right\|_{L^2([-\veps,\veps]_t \times \RR^{d-1}_x))}^2 \lesssim \|f_h\|_{L^2}^2.
\end{equation}
\end{lemma}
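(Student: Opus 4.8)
The plan is to reduce to a fixed value of the time variable and then run a Cotlar--Stein / Schur-test argument exploiting the phase-space localization \eqref{actualdecay}. Since the operators $S(t,s)$ are unitary and $B_{\mu,\omega_\mu}^j f_h$ abbreviates $s\mapsto B_{\mu,\omega_\mu}^j(s)f_h(s,\cdot)$, Tonelli's theorem gives
\[
\sum_\mu \big\| B_{\mu,\omega_\mu}^j f_h\big\|_{L^2([-\veps,\veps]_t\times\RR^{d-1}_x)}^2 = \int_{-\veps}^{\veps}\Big(\sum_\mu \big\| B_{\mu,\omega_\mu}^j(s)f_h(s,\cdot)\big\|_{L^2(\RR^{d-1})}^2\Big)\,ds ,
\]
so it suffices to prove, uniformly in $s\in[-\veps,\veps]$, in $j\in[J,0]$, and in $h$, that $\sum_\mu \|A_\mu g\|_{L^2(\RR^{d-1})}^2 \lesssim \|g\|_{L^2(\RR^{d-1})}^2$ with $A_\mu := B_{\mu,\omega_\mu}^j(s)$, and then integrate in $s$.

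Set $\mathcal B g := (A_\mu g)_\mu \in \ell^2(\{\mu\};L^2(\RR^{d-1}))$; the desired bound is $\|\mathcal B\|\lesssim 1$, which is equivalent to $\|\mathcal B\mathcal B^*\|\lesssim 1$, where $\mathcal B\mathcal B^*$ is the block operator with $(\mu,\nu)$-entry $A_\mu A_\nu^*$. I would verify this by Schur's test, i.e.\ by showing $\sup_\mu \sum_\nu \|A_\mu A_\nu^*\|_{L^2\to L^2}\lesssim 1$, the column sums being controlled identically by symmetry. Each $A_\mu A_\nu^*$ is a composition of two $h$-pseudodifferential operators with symbols in $S_{1/2}(1)$ obeying the localization bound \eqref{actualdecay}; its symbol is therefore concentrated, on the scale $h^{1/2}$, near $\kappa_{s,0}(\mathscr D^j_\mu)\cap \kappa_{s,0}(\mathscr D^j_\nu)$, with derivative bounds of the form $\lesssim_{\alpha,N} h^{-|\alpha|/2}\big(1+h^{-1/2}d(\kappa_{s,0}(\mathscr D^j_\mu),\kappa_{s,0}(\mathscr D^j_\nu))\big)^{-N}$, and Calder\'on--Vaillancourt for $S_{1/2}(1)$ symbols then yields
\[
\|A_\mu A_\nu^*\|_{L^2\to L^2}\lesssim_N \big(1+h^{-1/2}d(\kappa_{s,0}(\mathscr D^j_\mu),\kappa_{s,0}(\mathscr D^j_\nu))\big)^{-N}.
\]
Since $\kappa_{s,0}$ is a canonical transformation on a compact set for $|s|\le \veps$, it is bi-Lipschitz with uniform constants, so $d(\kappa_{s,0}(\mathscr D^j_\mu),\kappa_{s,0}(\mathscr D^j_\nu))\gtrsim d(\mathscr D^j_\mu,\mathscr D^j_\nu)$; and by \eqref{bjsupp}--\eqref{bjseparation} the sets $\mathscr D^j_\mu$ form an essentially finitely-overlapping tiling of $\RR^{2(d-1)}$ by cubes of side $\approx 2^j$ centered at the points $\mu\in 2^j\mathbb Z^{2(d-1)}$, so $d(\mathscr D^j_\mu,\mathscr D^j_\nu)\approx |\mu-\nu|$ once $|\mu-\nu|\gg 2^j$. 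Summing over $\nu=\mu+2^j k$, and using crucially that $2^j\ge 2^J\ge h^{1/2}$ so that $h^{-1/2}2^j\ge 1$,
\[
\sum_\nu \|A_\mu A_\nu^*\|_{L^2\to L^2}\lesssim \#\{\nu: |\mu-\nu|\lesssim 2^j\} + \sum_{\substack{k\in\mathbb Z^{2(d-1)}\\ |k|\ge C}}\big(h^{-1/2}2^j|k|\big)^{-N}\lesssim 1 + \sum_{|k|\ge C}|k|^{-N}\lesssim 1
\]
for $N>2(d-1)$. This establishes the Schur bound, hence $\|\mathcal B\mathcal B^*\|\lesssim 1$, hence $\|\mathcal B\|\lesssim 1$, and the lemma follows after integration in $s$. (Equivalently, one could estimate the single operator $\sum_\mu A_\mu^*A_\mu$ directly: its symbol is, up to the $h^{1/2}$-tails, $\sum_\mu |b^j_{\mu,\omega_\mu}(s,\cdot)|^2$, which by the same finite-overlap and tail-summation lies in $S_{1/2}(1)$ with bounds uniform in $h$, and Calder\'on--Vaillancourt finishes it.)

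The main obstacle is the composition step, i.e.\ the operator-norm estimate on $A_\mu A_\nu^*$. The class $S_{1/2}(1)$ is the borderline case in which the usual symbolic calculus provides no asymptotic gain in $h$, so one cannot invoke it off the shelf; the point is that the \emph{extra} $h^{1/2}$-scale decay in \eqref{actualdecay}—which is precisely the output of the Egorov-type Theorem \ref{thm:egorov}—propagates under composition. This can be seen either by applying the remark following Theorem \ref{thm:egorov} (with the canonical transformation and support region chosen appropriately) or by estimating the Schwartz kernel of $A_\mu A_\nu^*$ directly via repeated integration by parts. Together with the uniform bi-Lipschitz control on $\kappa_{s,0}$ and the near-tiling geometry of $\{\mathscr D^j_\mu\}_\mu$, this decay is exactly what forces the $\ell^1$-type summation in the Schur test to converge with a bound independent of $h$, $j$, $s$, and the choice of the $\omega_\mu$.
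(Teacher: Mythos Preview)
Your argument is correct, and the route differs from the paper's. Both reduce to a fixed $s$ and aim at $\sum_\mu\|A_\mu g\|_{L^2}^2\lesssim\|g\|_{L^2}^2$, but the paper proceeds via Khintchine's inequality: it randomizes by $\upsilon_\mu=\pm 1$, reduces to $\|\sum_\mu \upsilon_\mu A_\mu\|_{L^2\to L^2}\lesssim 1$, and then checks that the single symbol $\sum_\mu \upsilon_\mu b^j_{\mu,\omega_\mu}(s,\cdot)$ lies in $S_{1/2}(1)$ with bounds uniform in the sign choices, using the very same overlap sum $\sum_\mu(1+h^{-1/2}d(y,\eta;\kappa_{s,0}(\mathscr D^j_\mu)))^{-N}\lesssim 1$ that also drives your Schur summation. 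Your main line instead bounds the block operator $(A_\mu A_\nu^*)_{\mu,\nu}$ by a Schur test, at the cost of having to justify the off-diagonal estimate $\|A_\mu A_\nu^*\|\lesssim_N(1+h^{-1/2}d(\kappa_{s,0}(\mathscr D^j_\mu),\kappa_{s,0}(\mathscr D^j_\nu)))^{-N}$ in the borderline class $S_{1/2}(1)$; as you note, this is where the work is, and it does follow from a direct kernel estimate (or the remark after Theorem~\ref{thm:egorov}). The Khintchine route is cleaner here because it needs only a symbol bound for a \emph{sum}, not a composition estimate; your Cotlar--Stein route is more robust in settings where summing symbols is awkward. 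Your parenthetical alternative---bounding $\sum_\mu A_\mu^*A_\mu$ directly via Calder\'on--Vaillancourt---is essentially the paper's proof in different clothing, since Khintchine is just one device for passing from the square-function bound to that single-operator bound.
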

\begin{proof}
 Observe that it suffices to show that for each $s\in [-\veps,\veps]$
 \begin{equation*}
  \sum_\mu \left\| B_{\mu,\omega_\mu}^j(s) f_h(s)\right\|_{L^2(\RR^{d-1})}^2  \lesssim \|f_h(s)\|_{L^2(\RR^{d-1})}^2,
 \end{equation*}
 and integrate both sides over $s \in [-\veps,\veps]$. By Khintchine's inequality, this reduces to showing that for an arbitrary sequence $\{\upsilon_\mu\}_\mu$ taking values $\upsilon_\mu = \pm 1$
 \begin{equation*}
  \left\|   \sum_\mu \upsilon_\mu B_{\mu,\omega_\mu}^j(s) f_h(s)\right\|_{L^2(\RR^{d-1})} \lesssim \|f_h(s)\|_{L^2(\RR^{d-1})},
 \end{equation*}
where the implicit constant on the right is independent of the sequence.  By the aforementioned application of the Calder\'on-Vaillancourt theorem, this further reduces to showing that the symbol
\begin{equation}\label{symbolsum}
 \sum_\mu \upsilon_\mu b_{\mu,\omega_\mu}^j(s,y,\eta)
\end{equation}
is in $S_{1/2}(1)$ with bounds uniform in the sequence $\{\upsilon_\mu\}_\mu$.  This in turn follows from verifying that for $N \geq 2d$,
\begin{equation}\label{supportsum}
 \sum_\mu \left(1+h^{-\frac 12}d\left(y,\eta;\kappa_{s,0}\left(\mathscr{D}_{\mu}^j\right)\right)\right)^{-N} \lesssim \sum_\mu \left(1+2^{-j}d\left(\kappa_{0,s}(y,\eta);\mathscr{D}_{\mu}^j\right)\right)^{-N} \lesssim 1.
\end{equation}
Indeed, if this holds, then any weighted derivative $(h^{1/2}\prtl_{y,\eta})^\alpha$ of the symbol in \eqref{symbolsum} is $O(1)$ by \eqref{actualdecay} and the triangle inequality.

To see \eqref{supportsum}, note that the first inequality follows since $\kappa_{s,0}$ and its inverse are Lipschitz maps and $2^{-j} \leq h^{-1/2}$.  For the second inequality, observe that
\begin{equation*}
  d\left(\kappa_{0,s}(y,\eta);\mathscr{D}_{\mu}^j\right) \leq 4 |\kappa_{0,s}(y,\eta)-\mu| \;\text{ if } \; d\left(\kappa_{0,s}(y,\eta);\mathscr{D}_{\mu}^j\right) \geq 2^{j+3}.
\end{equation*}
The rest of \eqref{supportsum} then follows as a consequence of
\begin{equation*}
  \sum_{\mu \in 2^{j}\mathbb{Z}^{2(d-1)}} \left(1+2^{-j}|\kappa_{0,s}(y,\eta)-\mu| \right)^{-N} \lesssim 1.
\end{equation*}
\end{proof}

We now define the microlocal Kakeya-Nikodym norm of $f_h$ as
\[
\|f_h\|_{MKN}^2 := \sup_{J \leq j \leq 0} \sup_{\omega \in \Omega} \sup_{\mu \in 2^j\mathbb{Z}^{2(d-1)}} 2^{-j(d-1)}\int_{-\veps}^{\veps} \|B_{\mu,\omega}^j(s)f_h(s)\|_{L^2(\RR^{d-1})}^2\,ds.
\]
\begin{lemma}\label{lem:MKNandKN}
Using the norms in \eqref{KNnorm} involving averages over tubes, we have that \begin{equation}\label{MKNandKN}
\|f_h\|_{MKN} \lesssim \|\psi_h\|_{KN}.
\end{equation}
\end{lemma}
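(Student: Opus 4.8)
The plan is to bound, uniformly over $J\le j\le 0$, $\omega\in\Omega$, and $\mu\in 2^j\mathbb Z^{2(d-1)}$, the quantity $2^{-j(d-1)}\int_{-\veps}^{\veps}\|B_{\mu,\omega}^j(s)f_h(s)\|_{L^2(\RR^{d-1})}^2\,ds$ by a fixed multiple of $\|\psi_h\|_{KN}^2$; the supremum over these parameters then gives \eqref{MKNandKN}. Fix such parameters and let $\gamma_\mu$ be the geodesic segment obtained by projecting the bicharacteristic $s\mapsto\kappa_{s,0}(\mu)$ into the base. By the support restrictions on the frequency cutoffs the frequency component of $\mu$ satisfies $|\mu|\ll 1$, so $\gamma_\mu$ is nearly parallel to the $t$-axis, has length $\lesssim1$, and—after extending it—lies in $\varPi$; at each $s$ the base component of $\kappa_{s,0}(\mu)$ is the point $\gamma_\mu(s)\in\RR^{d-1}$, and in the $x$-chart the distance from a point to $\gamma_\mu(s)$ is comparable, uniformly in $s$, to $d_g(\,\cdot\,,\gamma_\mu)$.

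The heart of the matter is the inequality
\begin{equation*}
\int_{-\veps}^{\veps}\|B_{\mu,\omega}^j(s)f_h(s)\|_{L^2(\RR^{d-1})}^2\,ds\;\lesssim\;2^{j(d-1)}\,h^{-\frac{d-1}2}\sup_{\gamma\in\varPi}\int_{\mathcal T_{h^{1/2}}(\gamma)}|f_h|^2\,dV_g.
\end{equation*}
By Theorem \ref{thm:egorov} and \eqref{actualdecay}, $B_{\mu,\omega}^j(s)$ is an $h$-pseudodifferential operator whose symbol is concentrated on the box $\kappa_{s,0}(\mathscr D_\mu^j)$—of diameter $\approx2^j$, with base-component centered within $O(2^j)$ of $\gamma_\mu(s)$—with off-set decay $(1+h^{-1/2}d(\,\cdot\,;\kappa_{s,0}(\mathscr D_\mu^j)))^{-N}$ available for \emph{every} $N$; moreover, since this symbol varies at scale $h^{1/2}\le2^j$ in $\eta$ with $\eta$-support of diameter $\approx2^j$, the Schwartz kernel of $B_{\mu,\omega}^j(s)$ is rapidly decreasing outside an $h^{1/2}$-neighborhood of the diagonal—all with constants uniform in $s,\mu,\omega,j$. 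Decomposing $f_h(s)$ and $B_{\mu,\omega}^j(s)f_h(s)$ into the dyadic annuli $A_\ell(s):=\{|y-\gamma_\mu(s)|\approx2^\ell2^j\}$, $\ell\ge0$, the off-box symbol decay controls the blocks with equal input and output annulus, while the off-diagonal kernel decay—inputs and outputs lying in different annuli being separated by $\gtrsim2^j\gg h^{1/2}$ times a power of two—controls the rest; taking $N$ in \eqref{actualdecay} large enough one obtains, for any prescribed $N_0$,
\begin{equation*}
\|B_{\mu,\omega}^j(s)f_h(s)\|_{L^2(\RR^{d-1})}^2\;\lesssim_{N_0}\;\int_{\RR^{d-1}}\bigl(1+2^{-j}|y-\gamma_\mu(s)|\bigr)^{-2N_0}|f_h(s,y)|^2\,dy.
\end{equation*}
Integrating this in $s$, dominating the weight by $\sum_{\ell\ge0}2^{-2N_0\ell}\mathbf 1_{\mathcal T_{2^\ell2^j}(\gamma_\mu)}$, and using that a geodesic tube of radius $R\ge h^{1/2}$ and length $\lesssim1$ is covered by $\lesssim(R/h^{1/2})^{d-1}$ tubes $\mathcal T_{h^{1/2}}(\gamma)$, $\gamma\in\varPi$ (translate the core geodesic across an $h^{1/2}$-net of the $(d-1)$-dimensional cross-section), now yields the claimed bound, since $\sum_{\ell\ge0}2^{-2N_0\ell}2^{\ell(d-1)}<\infty$ once $2N_0>d-1$. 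Multiplying by $2^{-j(d-1)}$—which exactly cancels $2^{j(d-1)}$—and taking the supremum over $j,\mu,\omega$ gives $\|f_h\|_{MKN}^2\lesssim h^{-\frac{d-1}2}\sup_{\gamma\in\varPi}\int_{\mathcal T_{h^{1/2}}(\gamma)}|f_h|^2\,dV_g$.

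Finally I would pass from $f_h$ to $\psi_h$. Since the commutator $[h^2\Delta_g,\chi(hD)]$ has symbol $h\,c(z,\zeta)+O(h^2)$ with $c$ smooth and compactly supported, $f_h=\frac1h b(z,hD)[h^2\Delta_g,\chi(hD)]\psi_h=\mathcal B\psi_h+O(h^\infty)\psi_h$, where $\mathcal B$ is an $h$-pseudodifferential operator with compactly supported symbol, bounded on $L^2$ uniformly in $h$ and with Schwartz kernel $O(h^\infty)$ outside the $h$-neighborhood of the diagonal. As $h^{1/2}\gg h$, truncating gives $\int_{\mathcal T_{h^{1/2}}(\gamma)}|f_h|^2\,dV_g\lesssim\int_{\mathcal T_{2h^{1/2}}(\gamma)}|\psi_h|^2\,dV_g+O(h^\infty)\lesssim h^{\frac{d-1}2}\|\psi_h\|_{KN}^2+O(h^\infty)$ for each $\gamma\in\varPi$ (covering $\mathcal T_{2h^{1/2}}(\gamma)$ by $O(1)$ thin tubes), so $h^{-\frac{d-1}2}\sup_\gamma\int_{\mathcal T_{h^{1/2}}(\gamma)}|f_h|^2\,dV_g\lesssim\|\psi_h\|_{KN}^2+h^{-\frac{d-1}2}O(h^\infty)$; the error is absorbed because covering $M$ by $O(h^{-\frac{d-1}2})$ tubes $\mathcal T_{h^{1/2}}(\gamma)$ forces $\|\psi_h\|_{KN}^2\gtrsim\|\psi_h\|_{L^2(M)}^2$. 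Combining the displays completes the proof.

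The step I expect to be most delicate is the localization estimate in the second paragraph at the critical scale $2^j=h^{1/2}$, where the box $\kappa_{s,0}(\mathscr D_\mu^j)$, the $h^{1/2}$ kernel-concentration scale, and the target thin-tube radius all coincide, so one cannot afford an $O(h^\infty)$ truncation and must genuinely use that the decay exponent $N$ in \eqref{actualdecay} (equivalently, in Theorem \ref{thm:egorov}) is arbitrary—large enough both to dominate the off-diagonal annular blocks and to beat the volume factors $(2^\ell2^j/h^{1/2})^{d-1}$ that arise when the fattened tubes $\mathcal T_{2^\ell2^j}(\gamma_\mu)$ are covered by $h^{1/2}$-tubes. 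The reason the estimate is lossless—no $h^{-\e}$ or logarithmic factor—is that the normalization $2^{-j(d-1)}$ in the definition of $\|\,\cdot\,\|_{MKN}$ is precisely the number of $h^{1/2}$-tubes needed to fill a $2^j$-thickened geodesic tube.
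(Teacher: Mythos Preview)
Your argument is correct and follows the same overall logic as the paper's proof, but the two differ in how the localization is organized. The paper introduces Fermi coordinates adapted to the geodesic $s\mapsto\kappa_{s,0}(\mu)$ and builds a partition of unity $\sum_{\mathbf k}\chi_{\mathbf k}^2=1$ indexed by $h^{1/2}$-tubes $\widetilde{\mathcal T}_{\mathbf k}$ from the outset; it then proves two separate inequalities, one passing from $B^j_{\mu,\omega}(s)f_h$ to the pieces $\chi_{\mathbf k}f_h$ (splitting $\mathbf k$ into indices near and far from the propagated support $\mathscr E_\mu^j$), and a second passing from $\chi_{\mathbf k}f_h$ to $\chi_{\mathbf k}\psi_h$. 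You instead work at the coarser scale $2^j$ first, establishing the weighted bound
\[
\|B_{\mu,\omega}^j(s)f_h(s)\|_{L^2}^2\lesssim\int\bigl(1+2^{-j}|y-\gamma_\mu(s)|\bigr)^{-2N_0}|f_h(s,y)|^2\,dy
\]
via a dyadic annular decomposition and the symbol/kernel decay from \eqref{actualdecay}, and only afterwards cover each fattened tube $\mathcal T_{2^\ell 2^j}(\gamma_\mu)$ by $O((2^\ell 2^j h^{-1/2})^{d-1})$ thin tubes. Your route avoids the Fermi-coordinate construction and is a bit more streamlined; the paper's route keeps the thin-tube structure visible throughout and separates the $f_h\to\psi_h$ step more cleanly. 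Either way the key point---that the arbitrary decay exponent $N$ in \eqref{actualdecay} beats the $(2^\ell 2^j/h^{1/2})^{d-1}$ covering loss, with no logarithmic or $h^{-\e}$ factor left over---is the same, and you identify it correctly in your final paragraph.
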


\begin{proof} The bound \eqref{MKNandKN} will follow by showing that for any $j,\mu,\omega$
\[
2^{-j(d-1)}\int_{-\veps}^{\veps} \|B_{\mu,\omega}^j(s)f_h(s)\|_{L^2(\RR^{d-1})}^2\,ds \lesssim \sup_{\gamma \in \varPi} h^{-\frac{d-1}{2}} \int_{\mathcal{T}_{h^{1/2}}(\gamma)} |\psi_h(t,x)|^2dxdt.
\]
In what follows, let $\pi:T^*\RR^{d-1} \to \RR^{d-1}$ denote the projection $\pi(x,\xi) = x$.  Observe that the integral curves of the Hamiltonian vector field $H_{p_t}$ are nonlinear reparameterizations of unit speed geodesics in $T^*\RR^d$, which are integral curves of the Hamiltonian vector field determined by $\frac 12(\sum_{i,j}g^{ij}(z)\zeta_i\zeta_j -1)$.  So while
\begin{equation*}
\mathscr{E}_\mu^j :=\cup_{s\in[-\veps,\veps]}\pi(\kappa_{s,0}(\mathscr{D}_{\mu}^j))
\end{equation*}
does not define a tubular neighborhood, it is contained in a tubular neighborhood of width $O(2^j)$ about the unit speed geodesic segment which intersects the $s=0$ plane with coordinates $\mu \in T^*(\{s=0\})$.

We now claim that there exists a cover of our coordinate chart by family of (not necessarily tubular) neighborhoods about geodesic segments $\{ \widetilde{\mathcal{T}}_\mathbf{k}\}_{\mathbf{k}}$ indexed by $\mathbf{k} \in 2^J\mathbb{Z}^{d-1}$ ($2^J \approx h^{1/2}$ as above) and an associated partition of unity $\sum \chi_{\mathbf{k}}^2 = 1$ in the chart with $\supp(\chi_{\mathbf{k}}) \subset \widetilde{\mathcal{T}}_\mathbf{k}$ with the following properties for some $C$ uniform and $\mathbf{k}_0$ fixed:
\begin{gather}
 d(\widetilde{\mathcal{T}}_\mathbf{k},\mathscr{E}_\mu^j) \approx |\mathbf{k}-\mathbf{k}_0| \text{ whenever }
 d\big(\widetilde{\mathcal{T}}_\mathbf{k}, \mathscr{E}_\mu^j \big) \geq C2^j,
\label{distant}\\
\#\left\{\mathbf{k}: d\left(\widetilde{\mathcal{T}}_\mathbf{k}, \mathscr{E}_\mu^j \right) < C2^j\right\} = O\left((h^{-1/2}2^j)^{d-1}\right),\label{cardinality}\\
\text{each $\widetilde{\mathcal{T}}_\mathbf{k}$ can be covered by $O(1)$ tubular neighborhoods $\tubeh$},\label{finitecover}\\
\widetilde{\mathcal{T}}_\mathbf{k}\cap \mathcal{T}_{\tilde{\mathbf{k}}} = \emptyset \text{ and } d(\widetilde{\mathcal{T}}_\mathbf{k},\widetilde{\mathcal{T}}_{\tilde{\mathbf{k}} })\approx |\mathbf{k}-\tilde{\mathbf{k}}|  \quad\text{ if } \quad|\mathbf{k}- \tilde{\mathbf{k}}| \geq 2^{J+3}. \label{mutualsep}
\end{gather}
To see that this cover exists, we momentarily take Fermi coordinates $(z_1,z')\in \RR^d$ adapted to a hypersurface orthogonal to the geodesic segment $\{\kappa_{s,0}(\mu): |s|\leq \veps\}$, so that $\{z'=0\}$ identifies the hypersurface and $r\mapsto (r,z')$ always parameterizes a unit speed geodesic.  In particular, we assume that $r \mapsto (r,0)$ parameterizes the geodesic segment given by $s\mapsto \kappa_{s,0}(\mu)$ so that $\mathscr{E}_\mu^j \subset \{z: |z'| \lesssim 2^j\}$.  In these coordinates, we take a partition of unity $\sum \chi_{\mathbf{k}}^2 = 1$ so that with $\mathbf{k} = (k_2,\dots,k_d)$
\begin{equation*}
\supp(\chi_{\mathbf{k}} ) \subset \widetilde{\mathcal{T}}_\mathbf{k} :=\left\{(z_1,z'): |z_1| \lesssim \veps, \; \max_{2\leq i\leq d}|z_i - k_i| \leq 2^{J+1}  \right\}.
\end{equation*}
The property \eqref{mutualsep} is immediate in these coordinates.  The set $\widetilde{\mathcal{T}}_\mathbf{k}$ is a neighborhood about the geodesic segment $\gamma_{\mathbf{k}}$ given as the image of $r\mapsto (r,\mathbf{k})$.  Since $2^J \approx h^{1/2}$, we have that $\widetilde{\mathcal{T}}_\mathbf{k}\subset \mathcal{T}_{C_1h^{1/2}}(\gamma_{\mathbf{k}})$ for some $C_1$ large, and \eqref{finitecover} follows easily in any coordinate system.  We now take $\mathbf{k}_0=0$ and it is then verified that we may take $C$ so that \eqref{cardinality} is satisfied and $ d(\widetilde{\mathcal{T}}_\mathbf{k},\widetilde{\mathcal{T}}_{\mathbf{k}_0}) \gtrsim |\mathbf{k}-\mathbf{k}_0|$  whenever $d\left(\widetilde{\mathcal{T}}_\mathbf{k}, \mathscr{E}_\mu^j \right) \geq C2^j$.  We now revert back to the original coordinates and at the cost of enlarging $C$ and the other implicit constants, \eqref{distant},  \eqref{cardinality}, and \eqref{mutualsep} are all satisfied here since the diffeomorphism can be taken to satisfy a Lipschitz bound with uniform constant.

Given the above, \eqref{finitecover} means that it suffices to show the two inequalities
\begin{gather}
  \|B_{\mu,\omega}^j(s)f_h(s)\|_{L^2(\RR^{d-1})}^2 \lesssim
  (h^{-1/2}2^j)^{d-1} \sup_{\mathbf{k}} \left\|(\chi_\mathbf{k}f_h)(s,\cdot)\right\|_{L^2(\RR^{d-1})}^2, \label{firstMKN} \\
      \int_{-\veps}^{\veps}\|(\chi_\mathbf{k}f_h)(s,\cdot)\|_{L^2(\RR^{d-1})}^2 ds \lesssim  \sup_{\mathbf{\tilde k}} \int\left\|(\chi_\mathbf{\tilde k}\psi_h)(t,\cdot)\right\|_{L^2(\RR^{d-1})}^2 dt, \label{secondMKN}
\end{gather}
We primarily focus on \eqref{firstMKN}, as the second bound will be seen to follow from very similar ideas.  Let $B_{\mu,\omega,\mathbf{k}}(s)$ be the operator with Schwartz kernel given by
\begin{equation}\label{kernel}
\frac{1}{(2\pi h)^{d-1}} \int e^{\frac ih \langle y-x,\eta\rangle}b_{\mu,\omega}(s,y,\eta)\chi_{\mathbf{k}}(s,x)d\eta,
\end{equation}
and since $\sum_{\mathbf{k}}\chi_k^2 =1$, this means that
\begin{equation*}
B_{\mu,\omega}^j(s)(f_h(s,\cdot)) = B_{\mu,\omega}^j(s)\left(\sum_{\mathbf{k}} (\chi^2_k f_h)(s,\cdot)\right) = \sum_{\mathbf{k}} B_{\mu,\omega,\mathbf{k}}(s)\left((\chi_{\mathbf{k}} f_h)(s,\cdot)\right).
\end{equation*}
Partition the indices $\mathbf{k}$ into sets $K_1,K_2$ where $\mathbf{k} \in K_1$ if  $d\left(\widetilde{\mathcal{T}}_\mathbf{k}, \mathscr{E}_\mu^j \right) < C2^j$ and $\mathbf{k} \in K_2$ otherwise.  The compound symbol of $B_{\mu,\omega,\mathbf{k}}(s)$ is in $S_{1/2}(1)$ so the operator is uniformly bounded in $L^2(\RR^{d-1})$. Hence
\begin{align*}
\left\|B_{\mu,\omega}(s)\left(  \sum_{\mathbf{k}\in K_1} \chi_{\mathbf{k}} f_h (s,\cdot)\right) \right\|_{L^2(\RR^{d-1})}^2 &\lesssim \int \Big| \sum_{k\in K_1} \chi_{\mathbf{k}}^2(x) f_h (s,x)\Big|^2 dx \\
&\lesssim \sum_{k\in K_1} \int \big| \chi_{\mathbf{k}}(x) f_h (s,x)\big|^2 dx .
\end{align*}
where we have used \eqref{mutualsep} and $\chi_{\mathbf{k}}^2(x) \leq \chi_{\mathbf{k}}(x) $ as a bump function.  Given \eqref{cardinality}, the expression on the right is dominated by the right hand side of \eqref{firstMKN}.

We conclude \eqref{firstMKN} by showing that for any $N \geq 0$,
\begin{equation}\label{tripleBbounds}
\left\|B_{\mu,\omega,\mathbf{k}}(s)\right\|_{L^2 \to L^2} \lesssim_N (h^{-\frac 12}|\mathbf{k}-\mathbf{k}_0| )^{-N}
\end{equation}
so that by taking $N$ large
\begin{equation}\label{rapiddk}
\left\|\sum_{\mathbf{k}\in K_2} B_{\mu,\omega,\mathbf{k}}(s)\left((\chi_{\mathbf{k}} f_h)(s,\cdot)\right)\right\|_{L^2} \lesssim \sum_{\mathbf{k}\in K_2}(2^{-J}|\mathbf{k}-\mathbf{k}_0| )^{-N} \left\| (\chi_{\mathbf{k}} f_h)(s,\cdot)\right\|_{L^2}
\end{equation}
and the last expression is seen to be bounded by the right hand side of \eqref{firstMKN}.  To see \eqref{tripleBbounds}, observe that an integration by parts in \eqref{kernel} similar to \eqref{firstorderdo} shows that we may take the compound symbol $b_{\mu,\omega,\mathbf{k}}^j(s,y,x,\eta)$ of $B_{\mu,\omega,\mathbf{k}}^j(s)$ to satisfy
\begin{equation*}
  |(h^{\frac 12}\prtl_{y,x,\eta})^\alpha b_{\mu,\omega,\mathbf{k}}^j(s,\cdot)| \lesssim_{\alpha,N} (1+h^{-1}|y-x|^2)^{-N}
  \left(1+h^{-\frac 12}d\left(y,\eta;\kappa_{s,0}\left(\mathscr{D}_{\mu}^j\right)\right)\right)^{-2N}.
\end{equation*}
The right hand side here is thus $O((1+h^{-1/2}d(x,\mathscr{E}_\mu^j))^{-N})$ and since we may restrict to $x$ such that $\chi_{\mathbf{k}}(s,x)\neq 0$, this in turn is $O((h^{-\frac 12}|\mathbf{k}-\mathbf{k}_0| )^{-N})$ by \eqref{distant}.

The proof of \eqref{secondMKN} is similar, the primary difference is that $f_h$ is the image of $\psi_h$ under the operator in \eqref{fhoperator}, so the estimates are over $\RR^d$ instead of $\RR^{d-1}$.  But given \eqref{mutualsep}, the same principles as before apply here, as the kernel of the operator in \eqref{fhoperator} rapidly decreases on the scale of $h^{1/2}$ away from the diagonal.  The only other significant difference is that the analog of \eqref{rapiddk} should be adjusted to read
\begin{equation*}
  \left\|\sum_{\tilde{\mathbf{k}}\in K_2} \chi_{\tilde{\mathbf{k}}} f_h\right\|_{L^2(\RR^d)} \lesssim \sum_{\tilde{\mathbf{k}}\in K_2}(2^{-J}|\mathbf{k}-\tilde{\mathbf{k}}| )^{-N} \left\| \chi_{\tilde{\mathbf{k}}} \psi_h \right\|_{L^2(\RR^d)}.
\end{equation*}
\end{proof}
We now return to estimating \eqref{whitneyorgnorm} and state the two main theorems which form the crux of
the proof of Theorem \ref{thm:mainthm}.  The first step in is to use an almost orthogonality theorem, valid when $j=J$ or when $j \geq J+1$, which will be proved in \S\ref{sec:ao}. Note that $\kappa_{t,0}(\mu) = (x_{t,0}(\mu),\xi_{t,0}(\mu))$ is the image of $\mu \in 2^j \mathbb{Z}^{2(d-1)}$ under the map $\kappa_{t,0}$ defined after \eqref{duhamel}.
\begin{theorem}\label{thm:aothm}
Suppose $1 \leq p \leq \infty$ and $p^* = \min(p,p')$.  There exists a family of $h$-pseudodifferential operators $B_{\mu,\omega}^j(s)$ which satisfy \eqref{actualdecay} and with $\omega$ ranging over a finite collection of indices depending only on the dimension, such that
\begin{multline}\label{aoj}
 \left\|\sum_{\mu, \mu' \in \Xi_j} T(B_{\mu}^jf_h) T(B_{\mu'}^jf_h)\right\|_{L^{p}} \lesssim\\
\sum_{\omega,\omega'}\left(\sum_{\mu, \mu' \in \Xi_j} \left\|T_{\mu,\omega}^j (B_{\mu,\omega}^jf_h)  T_{\mu',\omega'}(B_{\mu',\omega'}^jf_h)\right\|_{L^{p}}^{p^*}\right)^{\frac{1}{p^*}},
\end{multline}
when $0 \geq j \geq J+1$ or when $j=J$.  Here $T_{\mu,\omega}^j$ (and similarly $T_{\mu',\omega'}^j$) is defined by
\begin{equation}\label{tmuomega}
(T_{\mu,\omega}^jF) (t,x) =\int_{-\veps}^t \left(T_{\mu,\omega}^{j,s}F(s,\cdot)\right)(x) \,ds
\end{equation}
where $T_{\mu,\omega}^{j,s}$ is the fixed $s$ operator defined by the formula
\begin{equation}\label{tmuomegafixeds}
(T_{\mu,\omega}^{j,s}f)(t,x)=\frac{1}{(2\pi h)^{d-1}} \int e^{\frac ih (\phi(t,s,x,\eta)-\langle y ,\eta \rangle)} a^j_{\mu,\omega}(t,s,x,\eta) f(y) \,dyd\eta
\end{equation}
with $\phi$ as in \eqref{parametrix} but with symbol $a^j_{\mu,\omega}(t,s,x,\eta)$ satisfying
\[
|\prtl^\alpha_{x,\eta} a^j_{\mu,\omega}(t,s,x,\eta)|\lesssim 2^{-j|\alpha|}(1+2^{-j}|x-x_{t,0}(\mu)|)^{-2d}(1+2^{-j}|\eta-\xi_{s,0}(\mu)|)^{-2d}.
\]
\end{theorem}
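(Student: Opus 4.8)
\emph{Proof strategy.}
The plan is to combine two largely independent ingredients: (i) a normal-form decomposition rewriting each microlocalized piece $T(B_\mu^jf_h)$ as a finite sum $\sum_\omega T_{\mu,\omega}^j(B_{\mu,\omega}^jf_h)$ modulo negligible errors, and (ii) an $L^p$ almost-orthogonality principle driven by the near-disjointness of the space-time frequency supports of the products as $(\mu,\mu')$ ranges over $\Xi_j$.

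For (ii) I would use the elementary fact that if $\{G_k\}$ are functions on $\RR^d$ whose Fourier transforms have boundedly overlapping supports, then $\|\sum_kG_k\|_{L^p}\lesssim(\sum_k\|G_k\|_{L^p}^{p^*})^{1/p^*}$ with $p^*=\min(p,p')$ --- for $p\le2$ this is Hausdorff--Young together with $\ell^p\hookrightarrow\ell^{p'}$, and for $p\ge2$ it follows by interpolating the trivial $L^\infty$ bound (giving $p^*=1$) against Plancherel at $L^2$ (giving $p^*=2$). Setting $G_{\mu,\mu'}:=T(B_\mu^jf_h)\,T(B_{\mu'}^jf_h)$, the identity $S(t,s)B_\mu^j(s)=B_\mu^j(t)S(t,s)$ --- immediate from $B_\mu^j(s)=S(s,0)B_\mu^j(0)S(0,s)$ --- gives $T(B_\mu^jf_h)(t)=B_\mu^j(t)\tilde\psi_h(t)$, and since the symbol of $B_\mu^j(t)$ is rapidly concentrated, on scale $h^{1/2}$, near $\kappa_{t,0}(\mathscr D_\mu^j)$ by Theorem~\ref{thm:egorov}, on any cube of sidelength $\approx h^{1/2}$ in the $(t,x)$ variables the frequency of $G_{\mu,\mu'}$ lies --- up to rapidly decreasing tails --- within $O(h^{-1}2^j)$ of $h^{-1}\Phi_{\mu,\mu'}(t,x)$, where $\Phi_{\mu,\mu'}(t,x):=\bigl(-p(t,x,\xi_{t,0}(\mu))-p(t,x,\xi_{t,0}(\mu')),\,\xi_{t,0}(\mu)+\xi_{t,0}(\mu')\bigr)$ is a sum of two points on the characteristic hypersurface $\{\tau=-p(t,x,\xi)\}$ of $hD_t+P(t,x,hD)$. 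When $J+1\le j\le0$ these two points lie in non-adjacent $\approx2^j$-separated caps, so strict convexity of this hypersurface and the geometry of the Whitney decomposition (as in \cite{tvv}) force the regions $\{\zeta:|\zeta-h^{-1}\Phi_{\mu,\mu'}(t,x)|\lesssim h^{-1}2^j\}$ to overlap boundedly, uniformly in $(t,x)$; when $j=J$ the pairs are adjacent, and for each of the $O(1)$ relative positions one needs only that the $O(h^{-1/2})$-balls about $h^{-1}\Phi_{\mu,\mu'}(t,x)$ overlap boundedly as $\mu$ runs over an $h^{1/2}$-net, which holds since $\mu\mapsto\xi_{t,0}(\mu)$ is bi-Lipschitz. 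Cutting $(t,x)$-space into $h^{1/2}$-cubes and applying the principle in each, one obtains $\|\sum_{(\mu,\mu')\in\Xi_j}G_{\mu,\mu'}\|_{L^p}\lesssim(\sum_{(\mu,\mu')\in\Xi_j}\|G_{\mu,\mu'}\|_{L^p}^{p^*})^{1/p^*}$.

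For (i), writing $T(B_\mu^jf_h)(t)=\int_{-\veps}^tB_\mu^j(t)S(t,s)f_h(s)\,ds$ and composing the $h$-PDO $B_\mu^j(t)$ (symbol concentrated near $\kappa_{t,0}(\mathscr D_\mu^j)$, by Theorem~\ref{thm:egorov}) with the parametrix \eqref{parametrix} for $S(t,s)$, I would use that the semiclassical PDO$\,\circ\,$FIO calculus gains a factor $h^{1/2}$ at each order --- since the amplitude in \eqref{parametrix} lies in the uncritical class $S(1)$ --- to present $B_\mu^j(t)S(t,s)$ as an $h$-FIO with phase $\phi(t,s,\cdot)$ whose amplitude, by \eqref{generatingfunction}, is supported where $x$ lies within $O(2^j)$ of $x_{t,0}(\mu)$ and $\eta$ within $O(2^j)$ of $\xi_{s,0}(\mu)$. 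A finite partition --- with $\omega$ indexing the $O(1)$ pieces (their number depending only on $d$) into which the curvilinear support of this amplitude, a diffeomorphic image of an $O(2^j)$-cube, is cut so as to extract coarse-scale factors $a_{\mu,\omega}^j$ obeying the product bound in the statement and redundant $h$-PDO factors $B_{\mu,\omega}^j(s)$ obeying \eqref{actualdecay} --- then yields $B_\mu^j(t)\tilde\psi_h(t)=\sum_\omega T_{\mu,\omega}^j(B_{\mu,\omega}^jf_h)(t)$ up to errors representing ``leakage'' into $O(1)$-neighboring $2^j$-phase-space cells and carrying amplitude $O(C^{-N})$ for a fixed dimension-dependent $C$ as large as desired; summing in $\mu$, these errors contribute at most a fixed power of $C$ times $C^{-N}$ times the right side of \eqref{aoj}, hence are absorbed by a bootstrap once $C$ and $N$ are large. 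Inserting this decomposition into the bound from (ii), estimating $\|G_{\mu,\mu'}\|_{L^p}\le\sum_{\omega,\omega'}\|T_{\mu,\omega}^j(B_{\mu,\omega}^jf_h)\,T_{\mu',\omega'}^j(B_{\mu',\omega'}^jf_h)\|_{L^p}+(\text{errors})$, and applying Minkowski's inequality in $\ell^{p^*}$ (valid since $p^*\ge1$) to pull the finite sum over $(\omega,\omega')$ outside, gives \eqref{aoj}.

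The hard part will be the composition step in (i) in the (near-)critical regime $2^j\approx h^{1/2}$, i.e.\ $j=J$ or $j$ near $J$: there $B_\mu^j$ sits in the borderline class $S_{1/2}(1)$, the symbolic calculus provides no asymptotic gain, and supports cannot be transported cleanly under the canonical relation --- precisely the difficulty Theorem~\ref{thm:egorov} is built to handle --- so the decomposition must be engineered so that the only losses incurred are the polynomial tails already present in \eqref{actualdecay} and the coarse scale $2^j$ (rather than $h^{1/2}$) in the amplitude bound for $a_{\mu,\omega}^j$. A secondary point is the truncated Duhamel integral $\int_{-\veps}^t$: the cutoff $\mathbf1_{\{s<t\}}$ is checked not to spoil the frequency localization because the characteristic set is a graph $\tau=-p(t,x,\xi)$ with $|\tau|\gtrsim1$, so that the $t$-frequency is pinned down by the $\xi$-frequency; this is also why the notion of ``frequency support'' above is made literal only after passing to $h^{1/2}$-cubes.
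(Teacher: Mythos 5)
Your approach is genuinely different from the paper's, and the almost-orthogonality step has a real gap. The paper factors $T(B_\mu^jf_h)\,T(B_{\mu'}^jf_h)=Q_{1,\mu}^jQ_{2,\mu}^j(\cdots)+[\text{commutator}]$ where $Q_{1,\mu}^j=(1+2^{-2j}|x-x_{t,0}(\mu)|^2)^{-2d}$ is a \emph{spatial} weight and $Q_{2,\mu}^j$ its Fourier-multiplier analogue, then proves \eqref{operatorao} by interpolating the trivial $L^1$, $L^\infty$ bounds with $L^2$; at $L^2$ the cross terms are controlled by splitting into $|\mu_x-\tilde\mu_x|\ge 8|\mu_\xi-\tilde\mu_\xi|$ (exploit $Q_{1,\mu}^j$) and the reverse (exploit $Q_{2,\mu}^j$), and everything is carried out pointwise in $t$, so the sharp Duhamel cutoff never enters. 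Your argument instead claims bounded overlap of the space-time Fourier supports of the $G_{\mu,\mu'}$, resting on the assertion that $\mu\mapsto\xi_{t,0}(\mu)$ is bi-Lipschitz. That map goes from the $2(d-1)$-dimensional lattice $2^j\mathbb Z^{2(d-1)}$ into $\RR^{d-1}$, so it cannot be injective, let alone bi-Lipschitz: distinct $\mu,\tilde\mu$ whose time-$t$ frequency components coincide produce $G_{\mu,\mu'}$ with the same nominal Fourier center. What rescues the sum in that situation is that $|x_{t,0}(\mu)-x_{t,0}(\tilde\mu)|\gtrsim|\mu-\tilde\mu|\ge2^j$, i.e.\ \emph{spatial} separation on scale $2^j$ — which is exactly what $Q_{1,\mu}^j$ encodes — but cutting $(t,x)$ into cubes of side $h^{1/2}\le2^j$ does not see this separation, so your reduction to the Fourier-overlap principle does not close. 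The sharp cutoff $\ind_{\{s<t\}}$ in Duhamel's formula is a second unaddressed source of $t$-frequency spreading that the paper's pointwise-in-$t$ formulation simply avoids.

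The decomposition step (i) is also organized differently in the paper, and the difference matters in the critical regime. There, the index $\omega$ does not label pieces of a cut-up amplitude; it labels the multi-indices $(\alpha,\beta,\gamma)$ arising when the inverse polynomial weight $c(x-x_{t,0}(\mu))\,c(hD_x-\xi_{t,0}(\mu))$ is expanded against the product via the Leibniz rule. The engine is Lemma~\ref{lem:aoprep}: up to $O(h^\infty)$ error, $2^{-j(|\alpha|+|\beta|)}(x-x_{t,0}(\mu))^\alpha(hD_x-\xi_{t,0}(\mu))^\beta S(t,s)f$ equals an FIO with a bounded, $2^{-j}$-calibrated symbol applied not to $f$ but to $\langle2^{-j}(y-x_{s,0}(\mu))\rangle^{4d}\langle2^{-j}(hD_y-\xi_{s,0}(\mu))\rangle^{4d}f$. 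This exact weight-transfer identity is what makes $B_{\mu,\omega}^j:=\,$(conjugate weight)$\circ B_\mu^j$ well-defined with symbol bounds \eqref{actualdecay}, since the unbounded weight is absorbed by the rapid decay of $b_\mu^j$. Your appeal to a PDO$\,\circ\,$FIO calculus gaining $h^{1/2}$ per order and a geometric partition of the amplitude does not produce this structure, and when $2^j\approx h^{1/2}$ (i.e.\ $j=J$, the hardest case) the $S_{1/2}(1)$ calculus yields no asymptotic gain at all, so the ``leakage'' errors from your partition cannot be discarded by taking $N$ large as you propose.
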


The second step in estimating this is to employ the bilinear estimates in the following theorem, which we prove in \S\ref{sec:bilinear}:
\begin{theorem}\label{thm:keybilinear}
Suppose $j=J,\dots,0$, $\mu,\mu' \in \Xi_j$ and  $T_{\mu,\omega}^j, T_{\mu',\omega'}^j,B_{\mu,\omega}^j,B_{\mu',\omega'}^j$ are as in Theorem \ref{thm:aothm}.  There exist corresponding ($s$ dependent) $h$-PDO $\tilde{B}_{\mu,\omega}^j$ and $\tilde{B}_{\mu',\omega'}^j$ satisfying \eqref{actualdecay} such that
the bilinear operator $R$ defined for fixed $s,s'\in[-\veps,\veps]$ by
\[
R(B_{\mu,\omega}^j(s)f,B_{\mu',\omega'}^j(s')g)(t,x) := \left(T_{\mu,\omega}^{j,s} (B_{\mu,\omega}^j(s)f)\right)(t,x) \cdot \left(T_{\mu',\omega'}^{j,s'} (B_{\mu',\omega'}^j(s')g)\right)(t,x).
\]
satisfies bounds
\begin{multline}\label{keybilinear}
h^{-\frac{2d}{q}+(d-1)}2^{-j(d-1-\frac{2(d+1)}{q})}
\left\|R(B_{\mu,\omega}^j(s)f,B_{\mu',\omega'}^j(s')g)\right\|_{L^{\frac q2}([-\veps,\veps]_t \times \RR^{d-1}_x)}
\lesssim\\
\|B_{\mu,\omega}^j(s)f\|_{L^2}\|B_{\mu',\omega'}^j(s')g\|_{L^2}+
\|\tilde{B}_{\mu,\omega}^j(s)f\|_{L^2}\|B_{\mu',\omega'}^j(s')g\|_{L^2}\\
+\|B_{\mu,\omega}^j(s)f\|_{L^2}\|\tilde{B}_{\mu',\omega'}^j(s')g\|_{L^2}+
\|\tilde{B}_{\mu,\omega}^j(s)f\|_{L^2}\|\tilde{B}_{\mu',\omega'}^j(s')g\|_{L^2},
\end{multline}
where the norms on the right hand side are taken to be $L^2(\RR^{d-1})$.
\end{theorem}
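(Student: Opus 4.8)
The plan is to fix $s,s'\in[-\veps,\veps]$, dispose of the ``non-transverse'' pairs, and treat the transverse ones by a parabolic rescaling of the scale-$2^j$ caps about $\mu$ and $\mu'$ followed by the bilinear oscillatory-integral restriction estimates for a curved hypersurface carrying two transverse caps. Since $(\mu,\mu')\in\Xi_j$, at least one of the $2(d-1)$ coordinates of $\mu-\mu'$ has size $\gtrsim 2^j$. If $|\mu_\xi-\mu'_\xi|\gtrsim 2^j$ (a frequency coordinate is large), then $|\xi_{t,0}(\mu)-\xi_{t,0}(\mu')|\gtrsim 2^j$ for all $t$, since $\kappa_{t,0}=\mathrm{id}+O(\veps)$ is bi-Lipschitz with constants $1+O(\veps)$; recalling that the amplitude $a_{\mu,\omega}^j$ is $\eta$-concentrated near $\xi_{s,0}(\mu)$ with $d_x\phi(t,s,x_{t,0}(\mu),\xi_{s,0}(\mu))=\xi_{t,0}(\mu)$, this means the two caps of the common hypersurface $\{(-p(t,x,\eta),\eta)\}$ (cf.\ \eqref{pgraph}) are $\approx 2^j$-separated --- the transversality needed below. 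Otherwise $|\mu_\xi-\mu'_\xi|$ is small, so the large coordinate lies among the $\mu_x-\mu'_x$, whence $|x_{t,0}(\mu)-x_{t,0}(\mu')|\gtrsim 2^j$ for all $t\in[-\veps,\veps]$ (taking $\veps$ small so the flow perturbs this difference only by $O(\veps 2^j)$); then the rapid decay of the symbols of $T_{\mu,\omega}^{j,s}$ and $T_{\mu',\omega'}^{j,s'}$ off the moving spatial tubes about $x_{t,0}(\mu)$ and $x_{t,0}(\mu')$ gives $\|R(B_{\mu,\omega}^j(s)f,B_{\mu',\omega'}^j(s')g)\|_{L^{q/2}}=O(h^\infty)\|B_{\mu,\omega}^j(s)f\|_{L^2}\|B_{\mu',\omega'}^j(s')g\|_{L^2}$, which is dominated by the right side of \eqref{keybilinear}. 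So we may assume the transverse case. The auxiliary $\tilde B_{\mu,\omega}^j$, $\tilde B_{\mu',\omega'}^j$ (again satisfying \eqref{actualdecay}, by Theorem \ref{thm:egorov}) will be slight enlargements of $B_{\mu,\omega}^j$, $B_{\mu',\omega'}^j$, produced when reconciling the fuzzy $S_{1/2}(1)$-cutoffs with the bilinear estimate.

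Next I would carry out the rescaling. Fixing $t,s$ and zooming into $\mathscr{D}_\mu^j$ by $\eta=\xi_{s,0}(\mu)+2^j\tilde\eta$, $x=x_{t,0}(\mu)+2^j\tilde x$, with the corresponding rescalings of $t$ and of the dual variable $y$, this is --- by \eqref{genfcnnondeg} and \eqref{pgraph} --- the standard parabolic rescaling of a cap of a hypersurface with nonvanishing curvature: the phase $\phi(t,s,x,\eta)-\langle y,\eta\rangle$ becomes $2^{2j}$ times a new phase which is again Carleson--Sj\"olin with principal curvatures bounded away from $0$, all of one sign; the amplitude $a_{\mu,\omega}^j$ (obeying $|\prtl^\alpha_{x,\eta}a_{\mu,\omega}^j|\lesssim 2^{-j|\alpha|}(1+2^{-j}|x-x_{t,0}(\mu)|)^{-2d}(1+2^{-j}|\eta-\xi_{s,0}(\mu)|)^{-2d}$) becomes uniformly in $S(1)$; and the semiclassical parameter becomes $\tilde h:=h2^{-2j}\in(0,1]$. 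Since this same rescaling (the cap about $\mu'$ differs from that about $\mu$ only by an $O(2^j)$ translation) turns the $\approx 2^j$-separation of the caps into an $\approx 1$ separation, and since $\kappa_{t,s}=\mathrm{id}+O(\veps)$ makes the two rescaled hypersurfaces uniformly controlled perturbations of a fixed transverse pair, the variable-coefficient bilinear estimates apply.

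It remains to pass to sharply microlocalized inputs, invoke the bilinear estimate, and undo the scaling. Since $a_{\mu,\omega}^j(t,s,x,\eta)$ is $h^\infty$-concentrated in $\eta$ near $\xi_{s,0}(\mu)$ at scale $2^j$, Theorem \ref{thm:egorov} gives $T_{\mu,\omega}^{j,s}=T_{\mu,\omega}^{j,s}\tilde B_{\mu,\omega}^j(s)+O(h^\infty)$ with $\tilde B_{\mu,\omega}^j(s)$ a slight enlargement of the $\eta$-support of $a_{\mu,\omega}^j$ satisfying \eqref{actualdecay}, so that $T_{\mu,\omega}^{j,s}(B_{\mu,\omega}^j(s)f)=T_{\mu,\omega}^{j,s}(\tilde B_{\mu,\omega}^j(s)B_{\mu,\omega}^j(s)f)+O(h^\infty)\|f\|_{L^2}$. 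In the rescaled picture $\tilde B_{\mu,\omega}^j(s)$ is an $\tilde h$-PDO with symbol a unit-scale bump having an $\tilde h^{1/2}$-fuzzy, rapidly decaying boundary; splitting this symbol into a compactly supported main part plus a boundary layer confined to an $\tilde h^{1/2}$-neighborhood of the cap boundary, the main part provides an admissible input to the bilinear restriction estimate for two transverse caps of a curved hypersurface --- Lee \cite{leebilinear} made global via the $\veps$-removal lemma of \cite{blairsogge} when $d\geq3$ (the removal needed since the rescaled $t$-interval and $x$-supports have size up to $\approx\tilde h^{-1/2}$), and H\"ormander \cite{HorOsc} when $d=2$ --- valid for $q>\frac{2(d+2)}{d}$ and also at $q=4$ when $d=2$, and contributing $\|B_{\mu,\omega}^j(s)f\|_{L^2}$, while the boundary-layer part, still frequency-supported in a slightly larger transverse cap, is absorbed into $\|\tilde B_{\mu,\omega}^j(s)f\|_{L^2}$; running this through both inputs yields the four terms of \eqref{keybilinear}. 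Undoing the rescaling --- each $L^2(\RR^{d-1})$ norm scale-invariant up to a power of $2^j$, the $L^{q/2}_{t,x}$ norm picking up a power of $2^j$, and $\tilde h=h2^{-2j}$ --- and collecting these against the $\tilde h$-dependence of the unit-scale bilinear estimate reproduces exactly the normalization $h^{-\frac{2d}{q}+(d-1)}2^{-j(d-1-\frac{2(d+1)}{q})}$ on the left of \eqref{keybilinear}.

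\textbf{Main obstacle.} The delicate point is the passage to sharply microlocalized inputs: because Theorem \ref{thm:egorov} operates in the critical class $S_{1/2}(1)$, it yields only rapid decay off the propagated region rather than genuine propagation of support, so the cutoffs $B_{\mu,\omega}^j(s)$ cannot simply be transported through Lee's estimate, which is naturally formulated for inputs with sharp cap supports and $S(1)$ amplitudes. Controlling the $\tilde h^{1/2}$ boundary layer of these cutoffs --- which is not negligible --- against the bilinear estimate is exactly what forces the multi-term right-hand side of \eqref{keybilinear} and the introduction of the enlarged operators $\tilde B_{\mu,\omega}^j$, $\tilde B_{\mu',\omega'}^j$. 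A more routine but still essential task is to check, uniformly in $j,s,s',\mu,\mu'$, that parabolic rescaling preserves the Carleson--Sj\"olin structure with single-signed curvatures and the transversality of the two caps, and to bookkeep the powers of $h$ and $2^j$ through both the rescaling and the global bilinear estimate.
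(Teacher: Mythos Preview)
Your case split between ``frequency-separated'' and ``spatially-separated'' pairs contains a genuine error. In the spatially-separated case you assert that $\|R(\cdot,\cdot)\|_{L^{q/2}}=O(h^\infty)$ because the amplitudes of $T_{\mu,\omega}^{j,s}$ and $T_{\mu',\omega'}^{j,s'}$ decay off tubes about $x_{t,0}(\mu)$ and $x_{t,0}(\mu')$. But the decay in Theorem~\ref{thm:aothm} is only $(1+2^{-j}|x-x_{t,0}(\mu)|)^{-2d}$, and for $(\mu,\mu')\in\Xi_j$ the spatial centers are separated by $\approx 2^j$, not by anything large relative to the tube width $2^j$. At points between the two centers both weights are $\approx 1$, so the pointwise product is $O(1)$, not $O(h^\infty)$. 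There is no smallness to exploit here, and this case cannot be discarded.

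The paper avoids this split entirely. It introduces compactly supported cutoffs $C_\mu^j(s)$, $C_{\mu'}^j(s')$ with $d(\supp c_\mu^j,\supp c_{\mu'}^j)\approx 2^j$ in the \emph{full} phase space, and defines $\tilde B_{\mu,\omega}^j(s)=(h2^{-2j})^{-(\frac{d+1}{q}-\frac{d-1}{2})}(I-C_\mu^j(s))\circ B_{\mu,\omega}^j(s)$; a direct symbol computation shows the composition $(I-C_\mu^j)\circ B_{\mu,\omega}^j$ gains a factor $h2^{-2j}$, so linear estimates handle the tails. The crucial observation for the main term is that in the bilinear product the two factors are evaluated at the \emph{same} spatial point $x$: if $(x,d_x\phi(t,s,x,\xi))\in\kappa_{t,0}(\supp c_\mu^j)$ and $(x,d_x\phi(t,s',x,\eta))\in\kappa_{t,0}(\supp c_{\mu'}^j)$ share the first coordinate, the $\approx 2^j$ phase-space separation is forced entirely into the second coordinate, giving $|d_x\phi(t,s,x,\xi)-d_x\phi(t,s',x,\eta)|\approx 2^j$. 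This is precisely the frequency transversality Lee's estimate requires, and it holds regardless of whether the original separation $\mu-\mu'$ was in position or momentum. Your argument misses this mechanism, which is why you were driven to the incorrect disjoint-support claim.

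A secondary point: your $\tilde B$ as a ``slight enlargement'' with $T_{\mu,\omega}^{j,s}=T_{\mu,\omega}^{j,s}\tilde B+O(h^\infty)$ is not what the paper does, and it is not clear your version satisfies \eqref{actualdecay} with uniform constants, since the amplitude $a_{\mu,\omega}^j$ has only polynomial (scale-$2^j$) decay in $\eta$, not rapid decay at scale $h^{1/2}$.
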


Note that the first factor on the left rewrites as $h^{(d-1)(\frac 12-\frac 1q)}(h^{\frac 12}2^{-j})^{\frac{d-1}{q}(q-\frac{2(d+1)}{d-1})}$, showing that the phase space separation of the $B_{\mu,\omega}^j$ and $B_{\mu',\omega'}^j$ (up to rapidly decreasing tails) results in a gain of $(h^{\frac 12}2^{-j})^{\frac{d-1}{q}(\frac{2(d+1)}{d-1}-q)}$ over what would be obtained by applying the linear estimates \eqref{linearbounds}. The $\tilde{B}_{\mu,\omega}^j$ are slight distortions of the $B_{\mu,\omega}^j$ that arise when replacing the rapidly decaying symbol of $T^j_{\mu,\omega}$ by a compactly supported one.  We take the family \eqref{Bfamily} to consist of the union of the operators constructed in these two theorems, that is, operators of the form  $B_{\mu,\omega}^j(s)$ or $\tilde{B}_{\mu,\omega}^j(s)$.

We can now begin discussing how to estimate the terms in \eqref{whitneyorgnorm}, showing that the two theorems here imply \eqref{higherdimbound}, \eqref{twodimboundL4}, and hence Theorem \ref{thm:mainthm}.  Given Theorem \ref{thm:aothm}, we have for any $J \leq j \leq 0$,
\begin{multline*}
\left\|\sum_{(\mu, \mu') \in \Xi_j } T(B_{\mu}^jf_h) T(B_{\mu'}^jf_h)\right\|_{L^{\frac q2}}\\ \lesssim
\sum_{\omega,\omega'}\left(\sum_{\mu, \mu' \in \Xi_j} \left\|T_{\mu,\omega}^j (B_{\mu,\omega}^jf_h)  T_{\mu',\omega'}^j (B_{\mu',\omega'}^jf_h) \right\|_{L^{\frac q2}}^{(\frac{q}{2})^*}\right)^{\frac{1}{(\frac{q}{2})^*}}.
\end{multline*}
where the $L^{\frac q2}$ norms are taken over $[-\veps,\veps]_t \times \mathbb{R}^{d-1}_x$. We now fix $\omega,\omega'$ and show that Theorem \ref{thm:keybilinear} supplies bounds on each term on the right hand side here.  In this case Minkowski's inequality for integrals shows that
\begin{multline*}
\left\|T_{\mu,\omega}^j (B_{\mu,\omega}^jf_h)  T_{\mu',\omega'}^j (B_{\mu',\omega'}^jf_h)\right\|_{L^{\frac q2}}
\lesssim\\
\int_{[-\veps,\veps]^2} \left(\int_{-\veps}^{\veps}\left\| T^{j,s}_{\mu,\omega}(B_{\mu,\omega}^j(s)f_h(s)) \cdot T^{j,s'}_{\mu',\omega'}(B_{\mu',\omega'}^j(s')f_h(s'))\right\|_{L^{\frac q2}(\RR^{d-1})}^{\frac q2} dt\right)^{\frac 2q}\,ds ds'
\end{multline*}
Hence \eqref{keybilinear} and H\"older's inequality show that this in turn is bounded by
\begin{multline}\label{messyrhs}
h^{\frac{2d}{q}-(d-1)}2^{j(d-1-\frac{2(d+1)}{q})} \Big( \|B_{\mu,\omega}^jf_h\|_{L^2(\RR^{d})}\|B_{\mu',\omega'}^jf_h\|_{L^2(\RR^{d})}+\\
\|\tilde{B}_{\mu,\omega,}^jf_h\|_{L^2(\RR^{d})}\|B_{\mu',\omega'}^jf_h\|_{L^2(\RR^{d})}+
\|B_{\mu,\omega}^jf_h\|_{L^2(\RR^{d})}\|\tilde{B}_{\mu',\omega'}^jf_h\|_{L^2(\RR^{d})}+\\
\|\tilde{B}_{\mu,\omega}^jf_h\|_{L^2(\RR^{d})}\|\tilde{B}_{\mu',\omega'}^jf_h\|_{L^2(\RR^{d})}\Big) .
\end{multline}

We will now bound the right hand side of \eqref{messyrhs} in two different ways, and then show how to optimize the choice.  If $(\mu,\mu') \in \Xi_j$, then $2^{-j}(\mu-\mu')$ lies in a fixed, finite collection of vectors (of size $\approx 1$ when $j \geq J+1$ or $O(1)$ when $j=J$).  Hence
\begin{equation}\label{cardinalXi}
 \#\{\mu': (\mu,\mu') \in \Xi_j \} = O(1) \text{ for any fixed } \mu \in 2^{j}\mathbb{Z}^{2(d-1)}.
\end{equation}
We now have for any two $\omega, \omega'\in \Omega$,
\begin{multline}\label{messyaorhs}
\left\|T_{\mu,\omega}^j f_h  T_{\mu',\omega'}^jf_h\right\|_{\ell^{(\frac q2)^*}_\mu L^{\frac q2}} \lesssim \\ h^{\frac{2d}{q}-(d-1)}2^{j(d-1-\frac{2(d+1)}{q})}  \left( \sum_{\mu} \sum_{\mu': (\mu,\mu') \in \Xi_j }\|B_{\mu,\omega}^jf_h\|_{L^2}^{(\frac{q}{2})^*}\|B_{\mu',\omega'}^jf_h\|_{L^2}^{(\frac{q}{2})^*}\right)^{\frac{1}{(\frac{q}{2})^*}}+ \cdots
\end{multline}
where the dots denote the corresponding contributions of the last three terms in \eqref{messyrhs}.  Observe that when $d \geq 3$, $(\frac q2)^*=\frac q2$ (since $q<\frac{2(d+1)}{d-1} \leq 4$ in this case) and hence by Cauchy-Schwarz and \eqref{cardinalXi}, the right hand side in \eqref{messyaorhs} is dominated by
\[
h^{\frac{2d}{q}-(d-1)}2^{j(d-1-\frac{2(d+1)}{q})}  \left( \sum_{\mu} \|B_{\mu,\omega}^jf_h\|_{L^2}^{q}\right)^{\frac{1}{q}}  \left( \sum_{\mu'}
\|B_{\mu',\omega'}^jf_h\|_{L^2}^{q}\right)^{\frac{1}{q}} + \cdots
\]
Indeed given \eqref{cardinalXi}, we may view $\mu'$ as a function of $\mu$ at the cost of increasing the implicit constant by a factor determined the $O(1)$ constant there.  By the embedding $\ell^2 \hookrightarrow \ell^q$ and Lemma \ref{lem:aosums}, we have that this in turn is bounded by
$
h^{\frac{2d}{q}-(d-1)}2^{j(d-1-\frac{2(d+1)}{q})}  \|f_h\|_{L^2}^{2}
$.  When $d=2$, the same conclusion holds since $(\frac q2)^*=(\frac q2)'=\frac{q}{q-2}$ and $\ell^{2} \hookrightarrow \ell^{\frac{2q}{q-2}}$.

On the other hand, when $d\geq 3$ we may estimate the sum appearing on the right in \eqref{messyaorhs} (and similarly the other terms) by
\begin{multline*}
\left( \sum_{\mu} \sum_{\mu': (\mu,\mu') \in \Xi_j } \|B_{\mu,\omega}^jf_h\|_{L^2}^{\frac{q}{2}} \|B_{\mu',\omega'}^jf_h\|_{L^2}^{\frac{q}{2}}\right)^{\frac{2}{q}}
 \lesssim \|f_h\|_{L^2} \left(\sum_{\mu'} \|B_{\mu',\omega'}^jf_h\|_{L^2}^{\frac{2q}{4-q}}\right)^{\frac{4-q}{2q}}\\
\lesssim \|f_h\|_{L^2}^{\frac 4q}\left(\sup_{\mu'} \|B_{\mu',\omega'}^jf_h\|_{L^2} \right)^{2-\frac 4q} \lesssim 2^{j(d-1)(1-\frac 2q)}\|f_h\|_{L^2}^{\frac 4q} \|f_h\|_{MKN}^{2-\frac 4q},
\end{multline*}
where the first inequality follows by H\"older's inequality with exponents satisfying $\frac 2q = \frac 12 +\frac{4-q}{2q}$ and the second inequality follows by using that $\frac{2q}{4-q}=2+\frac{4q-8}{4-q}$.  Note that we have used Lemma \ref{lem:aosums} twice in the process and also \eqref{cardinalXi} similar to before.  When $d=2$, a similar argument checks that the sum appearing on the right in \eqref{messyaorhs} is majorized by
\[
\|f_h\|_{L^2}^{2-\frac 4q}\left(\sup_{\mu'} \|B_{\mu',\omega'}^jf_h\|_{L^2} \right)^{\frac 4q}\lesssim 2^{\frac{2j}{q}}  \|f_h\|_{L^2}^{2-\frac 4q}\|f_h\|_{MKN}^{\frac 4q}.
\]

Repeating this for the finite collection of pairs $\omega,\omega'$, we have that when $d\geq 3$,
\begin{equation}\label{MKNbound}
\left\|\sum_{(\mu, \mu') \in \Xi_j } T(B_{\mu}^jf_h) T(B_{\mu'}^jf_h)\right\|_{L^{\frac q2}} \lesssim h^{\frac{2d}{q}-(d-1)}2^{j((d-1)(2-\frac 2q)-\frac{2(d+1)}{q})}  \|f_h\|_{L^2}^{\frac 4q}\|f_h\|_{MKN}^{2-\frac 4q},
\end{equation}
though the second factor on the right rewrites as $2^{2j(\frac{d-1}{q})(q-\frac{2d}{d-1})}$.  When $d=2$, \begin{equation}\label{MKNbound2d}
\left\|\sum_{(\mu, \mu') \in \Xi_j } T(B_{\mu}^jf_h) T(B_{\mu'}^jf_h)\right\|_{L^{\frac q2}} \lesssim h^{\frac{2d}{q}-(d-1)}2^{j(1-\frac 4q)}  \|f_h\|_{L^2}^{2-\frac 4q}\|f_h\|_{MKN}^{\frac 4q}.
\end{equation}
At the same time, in all cases we have
\begin{equation}\label{CSbound}
\left\|\sum_{(\mu, \mu') \in \Xi_j } T(B_{\mu}^jf_h) T(B_{\mu'}^jf_h)\right\|_{L^{\frac q2}} \lesssim h^{\frac{2d}{q}-(d-1)}2^{j(d-1-\frac{2(d+1)}{q})}  \|f_h\|_{L^2}^{2}.
\end{equation}

Consequently, if we fix an integer $J \leq L \leq 0$, then when $d \geq 3$, \eqref{MKNbound} yields
\begin{multline*}
\sum_{j=J}^L\left\|\sum_{(\mu, \mu') \in \Xi_j } T(B_{\mu}^jf_h) T(B_{\mu'}^jf_h)\right\|_{L^{\frac q2}} \lesssim\\
 h^{\frac{2d}{q}-(d-1)}2^{L((d-1)(2-\frac 2q)-\frac{2(d+1)}{q})} \|f_h\|_{L^2}^{\frac 4q}\|f_h\|_{MKN}^{2-\frac 4q},
\end{multline*}
since $q>\frac{2(d+2)}{d}>\frac{2d}{d-1}$ implies that $(d-1)(2-\frac 2q)-\frac{2(d+1)}{q}>0$.  Moreover, $d-1-\frac{2(d+1)}{q}<0$, so \eqref{CSbound} yields
\[
\sum_{j=L}^0\left\|\sum_{(\mu, \mu') \in \Xi_j } T(B_{\mu}^jf_h) T(B_{\mu'}^jf_h)\right\|_{L^{\frac q2}} \lesssim h^{\frac{2d}{q}-(d-1)}2^{L(d-1-\frac{2(d+1)}{q})}  \|f_h\|_{L^2}^{2}.
\]
It is then verified that we optimize the bounds on \eqref{whitneyorgnorm} by taking $L$ such that $2^{L(d-1)/2}\approx \|f_h\|_{L^2}/\|f_h\|_{MKN}$, which is possible since the latter quantity is $O(1)$ and bounded below by $h^{\frac{d-1}{4}}$. This shows that \eqref{whitneyorgnorm} is bounded by
\[
h^{\frac{2d}{q}-(d-1)} \|f_h\|_{L^2}^{4-\frac{4(d+1)}{q(d-1)}}\|f_h\|_{MKN}^{\frac{4(d+1)}{q(d-1)}-2}
\]
When $d=2$ and $4<q<6$, a similar argument using \eqref{MKNbound2d} and \eqref{CSbound} shows the same bound. Finally, when $d=2$ and $q=4$ we obtain that \eqref{whitneyorgnorm} is bounded by
\[
\log(h^{-1})\|f_h\|_{L^2}\|f_h\|_{MKN}.
\]
Given \eqref{fhbound} and Lemma \ref{lem:MKNandKN}, this completes the proofs of \eqref{higherdimbound}, \eqref{twodimboundL4}.

Another perspective on \eqref{MKNbound} and \eqref{CSbound} (and similarly when $d=2$) results from noting that right hand sides of these two inequalities are respectively bounded by
\begin{equation*}
\begin{gathered}
  h^{-2\delta(q)}(h2^{-2L})^{\frac{d-1}{q}(q-\frac{2d}{d-1})}\|\psi_h\|_{L^2}^{\frac 4q}|||\psi_h|||_{KN}^{2-\frac 4q},\\
  h^{-2\delta(q)}(h2^{-2L})^{\frac{d-1}{2q}(\frac{2(d+1)}{d-1}-q)}\|\psi_h\|_{L^2}^2,
\end{gathered}
\end{equation*}
after an application of \eqref{MKNandKN} and $\|f_h\|_{L^2} \lesssim \|\psi_h\|_{L^2}$ (as well as recalling the differences between \eqref{KNnorm}, \eqref{KNnormNonavg}).  The latter of these underscores the gain resulting from the bilinear estimates while the former trades a loss in $h2^{-2L}$ for a gain in the Kakeya-Nikodym norm of $\psi_h$.  This also shows that $L$ is taken so that
\begin{equation*}
  |||\psi_h|||_{KN}\approx \|\psi_h\|_{L^2}(h2^{-2L})^{\frac{d-1}{4}}.
\end{equation*}

\section{Almost Orthogonality}\label{sec:ao}
In this section, we prove Theorem \ref{thm:aothm}.  We begin with a lemma, recalling the notation for $(x_{t,0}(\mu),\xi_{t,0}(\mu))$ following \eqref{duhamel} and prior to Theorem \ref{thm:aothm}.
\begin{lemma}\label{lem:aoprep}
Let $2^j \in [h^{\frac 12},1]$, $\mu \in 2^j\mathbb{Z}^{2(d-1)}$ and suppose that $\alpha,\beta$ are multiindices such that $|\alpha|,|\beta|\leq 2d$.  Given $f \in \mathcal{S}(\RR^{d-1})$, let
\[
F_\mu^s(y) := \langle 2^{-j}(y-x_{s,0}(\mu)) \rangle^{4d}(\langle 2^{-j}(hD_y-\xi_{s,0}(\mu))\rangle^{4d} f)(y),
\]
with $\langle w \rangle = (1+|w|^2)^{\frac 12}$. There exists a symbol $a^{j}_{\mu,\alpha,\beta}(t,s,x,\xi)$ satisfying
\begin{equation}\label{qmusymbolbounds}
 \left|D^{\gamma}_{x,\xi}a^{j}_{\mu,\alpha,\beta}(t,s,x,\xi)\right| \lesssim_{\gamma} 2^{-j|\gamma|}\langle 2^{-j}(y-x_{t,0}(\mu)) \rangle^{-2d}\langle 2^{-j}(\xi-\xi_{s,0}(\mu))\rangle^{-2d}
\end{equation}
such that up to negligible $O(h^\infty)$ error,
\begin{multline}
2^{-j(|\alpha|+|\beta|)}(x-x_{t,0}(\mu))^{\alpha}(hD_x-\xi_{t,0}(\mu))^{\beta} (S(t,s)f)(x) = \\
\frac{1}{(2\pi h)^{d-1}} \iint e^{\frac ih (\phi(t,s,x,\xi)-\langle y ,\xi\rangle)} a^{j}_{\mu,\alpha,\beta}(t,s,x,\xi) F_\mu^s(y)\,dy d\xi.
\end{multline}
\end{lemma}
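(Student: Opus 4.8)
The plan is to use the parametrix \eqref{parametrix} for $S(t,s)$ and commute the weight $2^{-j(|\alpha|+|\beta|)}(x-x_{t,0}(\mu))^\alpha(hD_x-\xi_{t,0}(\mu))^\beta$ rightward through the Fourier integral operator, absorbing the resulting frequency factors into the factor $\langle 2^{-j}(hD_y-\xi_{s,0}(\mu))\rangle^{4d}$ and the resulting position factors into the factor $\langle 2^{-j}(\cdot-x_{s,0}(\mu))\rangle^{4d}$ that together build $F_\mu^s$. The key algebraic input is that $\{\kappa_{r,r'}\}$ is a flow, so $\kappa_{t,0}=\kappa_{t,s}\circ\kappa_{s,0}$ and hence $\kappa_{t,s}$ carries $(x_{s,0}(\mu),\xi_{s,0}(\mu))$ to $(x_{t,0}(\mu),\xi_{t,0}(\mu))$; by the generating-function relations \eqref{generatingfunction} this says $d_x\phi(t,s,x_{t,0}(\mu),\xi_{s,0}(\mu))=\xi_{t,0}(\mu)$ and $d_\xi\phi(t,s,x_{t,0}(\mu),\xi_{s,0}(\mu))=x_{s,0}(\mu)$. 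Taylor expansion along segments then yields smooth amplitude-valued factorizations
\begin{gather*}
d_x\phi(t,s,x,\xi)-\xi_{t,0}(\mu)=A(x-x_{t,0}(\mu))+B(\xi-\xi_{s,0}(\mu)),\\
d_\xi\phi(t,s,x,\xi)-x_{s,0}(\mu)=\bar A(x-x_{t,0}(\mu))+\bar B(\xi-\xi_{s,0}(\mu)),
\end{gather*}
with $A,B,\bar A,\bar B$ bounded with all derivatives bounded uniformly in $\mu$; because $\phi(s,s,x,\xi)=\langle x,\xi\rangle$ one moreover gets $A,B,\bar B=O(\veps)$ and $\bar A=I+O(\veps)$, so $\bar A$ is invertible (cf. \eqref{genfcnnondeg}).

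First I would apply $(hD_x-\xi_{t,0}(\mu))^\beta$ to \eqref{parametrix}. Since $hD_x$ acting on $e^{\frac ih\phi}$ produces the factor $d_x\phi-\xi_{t,0}(\mu)$, Leibniz' rule together with the first factorization rewrites the result (up to the $O(h^\infty)$ parametrix error) as an oscillatory integral whose amplitude is a finite sum $\sum(x-x_{t,0}(\mu))^{\beta_1}(\xi-\xi_{s,0}(\mu))^{\beta_2}q_{\beta_1,\beta_2}(t,s,x,\xi)$ over $|\beta_1|+|\beta_2|\le|\beta|$, where each $q_{\beta_1,\beta_2}$ is supported in $\xi$ where $a$ is and satisfies $S(1)$ bounds uniformly in $\mu$ (terms in which an $hD_x$ falls on an amplitude rather than producing such a factor come with an extra power of $h$, hence of at worst $h^{1/2}$ after the $2^{-j|\beta|}$ normalization). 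I then move each frequency factor $(\xi-\xi_{s,0}(\mu))^{\beta_2}$ onto the input: on the semiclassical Fourier transform side one has $(\xi-\xi_{s,0}(\mu))^{\beta_2}\mathscr{F}_h f(\xi)=\mathscr{F}_h\!\big((hD_y-\xi_{s,0}(\mu))^{\beta_2}f\big)(\xi)$, and since the multiplier of $(hD_y-\xi_{s,0}(\mu))^{\beta_2}\langle 2^{-j}(hD_y-\xi_{s,0}(\mu))\rangle^{-4d}$, namely $(\xi-\xi_{s,0}(\mu))^{\beta_2}\langle 2^{-j}(\xi-\xi_{s,0}(\mu))\rangle^{-4d}$, obeys $|\partial^\gamma(\cdot)|\lesssim 2^{j(|\beta_2|-|\gamma|)}\langle 2^{-j}(\xi-\xi_{s,0}(\mu))\rangle^{-2d}$ (and composition with a multiplier is exact on the oscillatory-integral side), this peels off precisely the $\langle 2^{-j}(hD_y-\xi_{s,0}(\mu))\rangle^{4d}$ factor of $F_\mu^s$ and leaves a residual decay of order at least $2d$ in $2^{-j}(\xi-\xi_{s,0}(\mu))$.

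Second I would handle the position factors $(x-x_{t,0}(\mu))^{\alpha+\beta_1}$ — still outside the integral, multiplying in $x$ — together with the $\langle 2^{-j}(\cdot-x_{s,0}(\mu))\rangle^{4d}$ factor of $F_\mu^s$. The device is to conjugate the negative power $\langle 2^{-j}(\cdot-x_{s,0}(\mu))\rangle^{-4d}$, an $h$-pseudodifferential operator with symbol in $S_{1/2}(1)$, through the parametrix from the right. Composing a Fourier integral operator with such a pseudodifferential operator — which at this critical scale is exactly the content of Theorem \ref{thm:egorov} — again yields an oscillatory integral operator with the same phase, whose amplitude acquires the factor $\langle 2^{-j}(d_\xi\phi(t,s,x,\xi)-x_{s,0}(\mu))\rangle^{-4d}$ plus remainders in the same class; by the second factorization this factor equals $\langle 2^{-j}(\bar A(x-x_{t,0}(\mu))+\bar B(\xi-\xi_{s,0}(\mu)))\rangle^{-4d}$, and invertibility of $\bar A$ makes it carry genuine decay in $2^{-j}(x-x_{t,0}(\mu))$, which is what beats the $(x-x_{t,0}(\mu))^{\alpha+\beta_1}$ growth. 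Gathering the finitely many terms, one then checks that the normalizing factor $2^{-j(|\alpha|+|\beta|)}$ is balanced exactly by the powers $2^{j|\alpha|}$, $2^{j|\beta|}$ produced when the factors $(x-x_{t,0}(\mu))$ and $(\xi-\xi_{s,0}(\mu))$ are rescaled to the scale $2^j$ — legitimate because, thanks to the decay just produced, the amplitude effectively lives where $|x-x_{t,0}(\mu)|,|\xi-\xi_{s,0}(\mu)|\lesssim 2^j$ — so that the resulting amplitude $a^j_{\mu,\alpha,\beta}(t,s,x,\xi)$ satisfies \eqref{qmusymbolbounds}.

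The main obstacle is the second step. The usual symbol calculus (stationary-phase asymptotic expansions) breaks down at the critical scale $2^J\approx h^{1/2}\le 2^j$, so the composition of $S(t,s)$ with the multiplier $\langle 2^{-j}(\cdot-x_{s,0}(\mu))\rangle^{-4d}$ has to be controlled through the $S_{1/2}$ Egorov argument of Theorem \ref{thm:egorov}, keeping track of how the $\kappa_{t,s}$-propagated support and its rapidly decaying tails interact with the weights; it is here that the bookkeeping matching every negative power of $2^j$ in the normalization to a compensating factor of $(x-x_{t,0}(\mu))$ or $(\xi-\xi_{s,0}(\mu))$ must be done carefully.
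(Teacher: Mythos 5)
Your plan differs genuinely from the paper's: the paper absorbs both weights from $F_\mu^s$ directly into the amplitude $A(t,s,x,y,\eta)=a(t,s,x,\eta)\langle 2^{-j}(\eta-\xi_{s,0}(\mu))\rangle^{-4d}\langle 2^{-j}(y-x_{s,0}(\mu))\rangle^{-4d}$, applies the operator $(x-x_{t,0}(\mu))^\alpha(hD_x-\xi_{t,0}(\mu))^\beta$ to produce the growth factor $\langle 2^{-j}(x-x_{t,0}(\mu))\rangle^{|\alpha|}\langle 2^{-j}(d_x\phi-\xi_{t,0}(\mu))\rangle^{|\beta|}$, and then uses integration by parts with the phase-preserving vector field \eqref{phasepreserveA1} to convert the $y$-decay into $\langle 2^{-j}(d_\eta\phi-x_{s,0}(\mu))\rangle^{-4d}$, finishing with the observation that $\kappa_{t,s}$ is bi-Lipschitz. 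You, on the other hand, Taylor-factorize $d_x\phi-\xi_{t,0}(\mu)$ and push the resulting $(\xi-\xi_{s,0}(\mu))^{\beta_2}$ factors exactly onto the input as a Fourier multiplier, leaving only the position factors $(x-x_{t,0}(\mu))^{\alpha+\beta_1}$ to be handled by an FIO-PDO composition. Two small corrections before the substantive one: since $d_x\phi(s,s,x,\xi)=\xi$, the coefficient $B$ in your first factorization is $I+O(\veps)$, not $O(\veps)$; and Theorem \ref{thm:egorov} concerns the conjugation $SBS^*$ being an $h$-PDO, not the composition $S\circ B$ being an FIO whose amplitude acquires $b(d_\eta\phi,\eta)$ — the latter is a cognate but separate claim (closer to what Lemma \ref{thm:FIOPDOsym} establishes via stationary phase), so "exactly the content of Theorem~\ref{thm:egorov}" overstates.

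The more serious concern is the decay bookkeeping at the end of your second step. After moving $(\xi-\xi_{s,0}(\mu))^{\beta_2}$ onto the input, the remaining Fourier-multiplier decay is only of order $4d-|\beta_2|$, and the position factors to be beaten have grown to $(x-x_{t,0}(\mu))^{\alpha+\beta_1}$ with $|\alpha|+|\beta_1|$ up to $4d$. You claim the composed weight $\langle 2^{-j}(\bar A(x-x_{t,0}(\mu))+\bar B(\xi-\xi_{s,0}(\mu)))\rangle^{-4d}$ "carries genuine decay in $2^{-j}(x-x_{t,0}(\mu))$" because $\bar A$ is invertible; but this only holds in the regime $|x-x_{t,0}(\mu)|\gtrsim\veps|\xi-\xi_{s,0}(\mu)|$. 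On the codimension-zero set where $\bar A(x-x_{t,0}(\mu))\approx-\bar B(\xi-\xi_{s,0}(\mu))$, i.e.\ $|x-x_{t,0}(\mu)|\sim\veps|\xi-\xi_{s,0}(\mu)|$ with both scaled quantities large, the composed weight is $O(1)$ and one must fall back on the residual frequency decay: the product is then comparable to $\veps^{|\alpha|+|\beta_1|}\bigl(2^{-j}|\xi-\xi_{s,0}(\mu)|\bigr)^{|\alpha|+|\beta|-4d}$, which is merely bounded (and is $\approx\veps^{|\alpha|+|\beta_1|}$ when $|\alpha|+|\beta|=4d$), whereas \eqref{qmusymbolbounds} demands the decaying bound $\langle 2^{-j}(x-x_{t,0}(\mu))\rangle^{-2d}\langle 2^{-j}(\xi-\xi_{s,0}(\mu))\rangle^{-2d}$. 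The paper sidesteps this by never factorizing $d_x\phi-\xi_{t,0}(\mu)$: it keeps the growth distributed between $\langle 2^{-j}(x-x_{t,0}(\mu))\rangle^{|\alpha|}$ and $\langle 2^{-j}(d_x\phi-\xi_{t,0}(\mu))\rangle^{|\beta|}$, each capped at exponent $2d$, and pairs them against the two $4d$-weights in the coordinates $(d_\eta\phi-x_{s,0}(\mu),\eta-\xi_{s,0}(\mu))$, which $\kappa_{t,s}$ carries bi-Lipschitzly to $(x-x_{t,0}(\mu),d_x\phi-\xi_{t,0}(\mu))$. Your factorization forces all the position growth (up to degree $4d$) against a single $4d$-weight, and the weight is in a mixed variable. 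You would need to either refrain from moving the frequency factor to the input, or increase the exponents in $F_\mu^s$, to close this gap.
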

\begin{proof}
Let $A(t,s,x,y,\eta)$ be defined by
\[
 A(t,s,x,y,\eta) = a(t,s,x,\eta)\langle 2^{-j}(\eta-\xi_{s,0}(\mu))\rangle^{-4d}
 \langle 2^{-j}(y-x_{s,0}(\mu))\rangle^{-4d}
\]
so that up to $O(h^\infty)$ error (resulting only from the error term in \eqref{parametrix}),
\begin{multline*}
(2\pi h)^{d-1}S(t,s)f(x)  \\
=\int e^{\frac ih \phi(t,s,x,\eta)} a(t,s,x,\eta)
\langle 2^{-j}(\eta-\xi_{s,0}(\mu))\rangle^{-4d} \mathscr{F}_h(\langle 2^{-j}(hD-\xi_{s,0}(\mu))\rangle^{-4d}f)(\eta) d\eta\\
=\iint e^{\frac ih (\phi(t,s,x,\eta)-\langle y ,\eta \rangle)} A(t,s,x,y,\eta) F_\mu^s(y)\,dy d\eta,
\end{multline*}
where we have inverted $\mathscr{F}_h$ in the last expression and used that the argument of $\mathscr{F}_h$ in the second expression is $\langle 2^{-j}(y-x_{s,0}(\mu))\rangle^{-4d}F_\mu^s(y)$.
Next we write
\begin{multline}\label{A1integral}
2^{-j(|\alpha|+|\beta|)}(x-x_{t,0}(\mu))^\alpha  (hD_x-\xi_{t,0}(\mu))^\beta \int e^{\frac ih (\phi(t,s,x,\eta)-\langle y ,\eta \rangle)} A(t,s,x,y,\eta) \,d\eta\\=
\int e^{\frac ih (\phi(t,s,x,\eta)-\langle y ,\eta \rangle)} A_1(t,s,x,y,\eta) \,d\eta
\end{multline}
where
\begin{multline*}
2^{j|\gamma|} |D^{\gamma}_{x,y,\eta}A_1(t,s,\cdot)| \lesssim_{\gamma}
\langle 2^{-j}(\eta-\xi_{s,0}(\mu))\rangle^{-4d}
\langle 2^{-j}(y-x_{s,0}(\mu))\rangle^{-4d}
\\
\times \langle 2^{-j}(x-x_{t,0}(\mu))\rangle^{|\alpha|}
\langle 2^{-j}(d_x\phi(t,s,x,\eta)-\xi_{t,0}(\mu))\rangle^{|\beta|}.
\end{multline*}

The differential operator
\begin{equation}\label{phasepreserveA1}
\frac{1+2^{-j}(d_\eta\phi(t,s,x,\eta)-y)\cdot (h2^{-j}D_\eta)}{1+2^{-2j}|d_\eta\phi(t,s,x,\eta)-y|^2}
\end{equation}
preserves the exponential function in the integral on the right in \eqref{A1integral}.  Moreover,  since $h 2^{-j} \leq 2^j$, the weighted derivative $h2^{-j}D_\eta$ appearing here means that the adjoint of the differential operator applied to $A_1$ has the effect of replacing that amplitude by one of the same regularity.  Using that
\[
\langle 2^{-j}|y-x_{s,0}(\mu)|\rangle^{-4d}
\langle 2^{-j}|d_\eta\phi(t,s,x,\eta)-y|\rangle^{-4d}
\lesssim
\langle 2^{-j}|d_\eta\phi(t,s,x,\eta)-x_{s,0}(\mu)|\rangle^{-4d},
\]
integration by parts shows that \eqref{A1integral} can further be rewritten as
\[
\int e^{\frac ih (\phi(t,s,x,\eta)-\langle y ,\eta \rangle)} A_2(t,s,x,y,\eta) \,d\eta
\]
where
\begin{multline*}
2^{j|\gamma|} |D^{\gamma}_{x,y,\eta}A_2(t,s,\cdot)| \lesssim_{\gamma} \\
\langle 2^{-j}(\eta-\xi_{s,0}(\mu))\rangle^{-4d}
\langle 2^{-j}(d_\eta\phi(t,s,x,\eta)-x_{s,0}(\mu))\rangle^{-4d}
\\
\times \langle 2^{-j}(x-x_{t,0}(\mu))\rangle^{|\alpha|}
\langle 2^{-j}(d_x\phi(t,s,x,\eta)-\xi_{t,0}(\mu))\rangle^{|\beta|}
\\
\lesssim \langle 2^{-j}(\eta-\xi_{s,0}(\mu))\rangle^{-2d}
\langle 2^{-j}(x-x_{t,0}(\mu))\rangle^{-2d},
\end{multline*}
where the last inequality follows since $|\alpha|, |\beta| \leq 2d$ and $\kappa_{t,s}$ is Lipschitz in $(x,\xi)$.

We now define
\begin{equation}\label{A2integral}
a^j_{\mu,\alpha,\beta}(t,s,x,\xi) := \frac{e^{-\frac ih \phi(t,s,x,\xi)}}{(2\pi h)^{d-1}}\iint e^{\frac ih \left( \phi(t,s,x,\eta)+\langle y,\xi-\eta\rangle \right)}
 A_2(t,s,x,y,\eta)\,dyd\eta
\end{equation}
and are left to show \eqref{qmusymbolbounds}.  Abbreviate the phase function here as
\[
 \Phi(t,s,x,y,\eta,\xi) = \phi(t,s,x,\eta)-\phi(t,s,x,\xi)+\langle y,\xi-\eta\rangle .
\]
First observe that for a single derivative
\[
D_{x_m} e^{\frac ih \Phi} =
-\sum_{l=1}^{d-1} \left(\int_0^1\prtl_{x_m \xi_l}^2 \phi(t,s,x,(1-r)\xi+r\eta)\,dr\right)  D_{y_l} e^{\frac ih \Phi}.
\]
Integration by parts thus shows that applying a weighted derivative $2^jD_{x_m}$ to $a^j_{\mu,\alpha,\beta}$ has the effect of replacing $A_2$ by an amplitude of the same regularity.  For derivatives in $\xi$, note that
\begin{equation*}
 D_{\xi_m} e^{\frac ih \Phi} = h^{-1}\left(\prtl_{\eta_m} \phi(t,s,x,\eta) - \prtl_{\xi_m} \phi(t,s,x,\xi)\right) e^{\frac ih \Phi} - D_{\eta_m}e^{\frac ih \Phi}.
\end{equation*}
As before, the first term on the right here is thus equivalent to the action of a vector field acting on the exponential in the $y$ variable.  Hence integration by parts in $y,\eta$ similar to before shows that applying a weighted derivative $2^jD_{\xi_m}$ to $a^j_{\mu,\alpha,\beta}$ again has the effect of replacing $A_2$ by an amplitude of the same regularity.

We now conclude the proof by observing that the expression in \eqref{A2integral} is uniformly bounded in $h$.  But this follows from integrating by parts with respect to the operator in \eqref{phasepreserveA1} with $2^{-j} = h^{-1/2}$ sufficiently many times so that integration in $y,\eta$ yields a gain of $O(h^{d-1})$.
\end{proof}
\begin{proof}[Proof of Theorem \ref{thm:aothm}]  Let $Q_{1,\mu}^j(t)$, $Q_{2,\mu}^j(t)$ be the multiplier and Fourier multiplier
\[
Q_{1,\mu}^j(t) := (1+ 2^{-2j}|x-x_{t,0}(\mu)|^2)^{-2d}, \quad
Q_{2,\mu}^j(t) := (1+ 2^{-2j}|hD_x-\xi_{t,0}(\mu)|^2)^{-2d}.
\]
Thus if $c(w) $ denotes the polynomial $c(w) = (1+ 2^{-2j}|w|^2)^{2d}$ for $w \in \RR^{d-1}$, we have
\begin{multline}\label{productinvert}
T(B_{\mu}^jf_h)\cdot T(B_{\mu'}^jf_h) = \\
Q_{1,\mu}^j(t) Q_{2,\mu}^j(t) c(x-x_{t,0}(\mu))c(hD_x-\xi_{t,0}(\mu))\left(T(B_{\mu}^jf_h)(t)\cdot T(B_{\mu'}^jf_h)(t)\right)  \\
+Q_{1,\mu}^j(t) [c(x-x_{t,0}(\mu)), Q_{2,\mu}^j(t)] c(hD_x-\xi_{t,0}(\mu))\left(T(B_{\mu}^jf_h)(t)\cdot T(B_{\mu'}^jf_h)(t)\right).
\end{multline}
In turn the commutator $[c(x-x_{t,0}(\mu)), Q_{2,\mu}^j(t)] $ can be realized as
\begin{equation}\label{spatialcomm}
[c(x-x_{t,0}(\mu)), Q_{2,\mu}^j(t)]=\sum_{|\alpha|\leq 2d} Q_{2,\mu}^{j,\alpha}(t) c_\alpha (x-x_{t,0}(\mu))
\end{equation}
where the symbol of $Q_{2,\mu}^{j,\alpha}(t)$ is $(h2^{-j}\prtl_\xi)^\alpha(1+ 2^{-2j}|\xi-\xi_{t,0}(\mu)|^2)^{-2d}$ and $c_\alpha$ is a polynomial of degree $|\alpha|$.  We thus concern ourselves with the contribution of the middle line in \eqref{productinvert}, as similar arguments treat the last line.

We now observe that
\begin{equation}\label{polyproduct}
c(x-x_{t,0}(\mu))c(hD_x-\xi_{t,0}(\mu))\left(T(B_{\mu}^jf_h) T(B_{\mu'}^jf_h)\right)
\end{equation}
is a sum consisting of terms of the form
\[
2^{-j(|\alpha| + |\beta|+ |\gamma|)}(x-x_{t,0}(\mu))^\alpha (hD_x-\xi_{t,0}(\mu))^\beta T(B_{\mu}^jf_h)\cdot  (hD_x-\xi_{t,0}(\mu))^{\gamma} T(B_{\mu'}^jf_h).
\]
The operator $2^{-j|\gamma|}(hD_x-\xi_{t,0}(\mu))^{\gamma} $ can be written as a sum consisting of terms
\[
2^{-j|\gamma_1|}(\xi_{t,0}(\mu)-\xi_{t,0}(\mu'))^{\gamma_1}\cdot 2^{-j|\gamma_2|}(hD_x-\xi_{t,0}(\mu))^{\gamma_2} , \qquad\gamma_1+\gamma_2 = \gamma,
\]
but since $|\xi_{t,0}(\mu)-\xi_{t,0}(\mu')|\lesssim 2^{j}$, the first factor here is a bounded function.

We next claim that
\[
2^{-j(|\alpha| + |\beta|)}(x-x_{t,0}(\mu))^\alpha (hD_x-\xi_{t,0}(\mu))^\beta B_{\mu}^j f_h = B_{\mu,\alpha, \beta}^j f_h
\]
for some $h$-PDO $B_{\mu,\alpha, \beta}^j$ whose symbol satisfies \eqref{actualdecay}.  Its symbol is
\begin{equation*}
  2^{-j(|\alpha| + |\beta|)}(x-x_{t,0}(\mu))^\alpha (\xi-\xi_{t,0}(\mu))^\beta b_\mu^j(t,x,\xi).
\end{equation*}
For each $(x,\xi)$, we claim that the absolute value of this symbol is bounded above by $(1+h^{-\frac 12}d(x,\xi;\kappa_{t,0}(\mathscr{D}_{\mu}^j))^{-N}$. Take $(\tilde x,\tilde \xi) \in \kappa_{t,0}(\mathscr{D}_{\mu}^j)$ so that
\begin{equation*}
|(x,\xi) - (\tilde x,\tilde \xi)| \leq 2d(x,\xi;\kappa_{t,0}(\mathscr{D}_{\mu}^j))
\end{equation*}
and by a multinomial expansion the symbol rewrites as
\begin{equation*}
    2^{-j(|\alpha| + |\beta|)}\sum_{\gamma_1 \leq \alpha, \gamma_2\leq \beta} c_{\gamma_1,\gamma_2} (x-\tilde{x})^{\gamma_1}(\tilde x -x_{t,0}(\mu))^{\alpha-\gamma_1} (\xi-\tilde{\xi})^{\gamma_2}(\tilde \xi-\xi_{t,0}(\mu))^{\beta -\gamma_2} b_\mu^j,
\end{equation*}
for some coefficients $c_{\gamma_1,\gamma_2}$.  Since $|\tilde x -x_{t,0}(\mu) |, |\tilde \xi-\xi_{t,0}(\mu)| = O(2^j)$, and $2^{-j} \leq h^{-1/2}$ the absolute value of the symbol is dominated by
\begin{multline*}
    \sum_{\gamma_1 \leq \alpha, \gamma_2\leq \beta} 2^{-j(|\gamma_1| + |\gamma_2|)} |x-\tilde{x}|^{|\gamma_1|} |\xi-\tilde{\xi}|^{|\gamma_2|}\left(1+h^{-\frac 12}d\left(x,\xi;\kappa_{t,0}(\mathscr{D}_{\mu}^j\right)\right)^{-N-|\alpha|-|\beta|}\\
    \lesssim \left(1+h^{-\frac 12}d\left(x,\xi;\kappa_{t,0}(\mathscr{D}_{\mu}^j\right)\right)^{-N}.
\end{multline*}
Derivatives of the symbol of $B_{\mu,\alpha, \beta}^j$ are handled similarly as they will be a sum of terms involving derivatives of $b_\mu^j$ and monomials as above with smaller powers.

We now apply Lemma \ref{lem:aoprep}, and reindex the sum formed by \eqref{polyproduct} terms by $\omega, \omega'$, so that it writes as a sum
\[
\sum_{\omega,\omega'}T^j_{\mu,\omega}(B_{\mu,\omega}^j f_h) T^j_{\mu',\omega'}(B_{\mu',\omega'}^j f_h) ,
\]
where $T^j_{\mu,\omega}$ is of the form \eqref{tmuomega}, \eqref{tmuomegafixeds}.

Fix $\omega,\omega'$, and set $G_{\mu,\mu'} = T^j_{\mu,\omega}(B_{\mu,\omega}^j f_h) T^j_{\mu',\omega'}(B_{\mu',\omega'}^j f_h)$. We claim that
\begin{equation}\label{operatorao}
\left\|\sum_{\mu,\mu' \in \Xi_j} Q_{1,\mu}^j Q_{2,\mu}^j (G_{\mu,\mu'})\right\|_{L^p} \lesssim \left(\sum_{\mu,\mu' \in \Xi_j} \left\|G_{\mu,\mu'} \right\|_{L^p}^{p^*}\right)^{\frac{1}{p^*}},
\end{equation}
Observe that when $p=1$ or $p=\infty$, \eqref{operatorao} follows from the fact that $Q_{1,\mu}^j(t)$ is multiplication by a bounded function and the convolution kernel of $Q_{2,\mu}^j(t)$ is a function with uniform $L^1$ norm.  By interpolation, it suffices to prove \eqref{operatorao} when $p=2$.  Moreover, it is sufficient to prove it for $t$ fixed.

We thus write the left hand side of \eqref{operatorao} as
\[
\sum_{\substack{\mu,\mu' \in \Xi_j\\ \tilde{\mu},\tilde{\mu}' \in \Xi_j}}\int_{\RR^{d-1}} Q_{1,\mu}^j(t) Q_{2,\mu}^j(t) (G_{\mu,\mu'}(t,\cdot))
\overline{Q_{1,\tilde{\mu}}(t) Q_{2,\tilde{\mu}}(t) (G_{\tilde{\mu},\tilde{\mu}'}(t,\cdot))}.
\]
Given \eqref{cardinalXi}, the remainder of the proof for $p=2$ now follows by showing that the absolute value of each integral here is bounded by
\begin{equation}\label{museparation}
(1+2^{-2j}|\mu-\tilde{\mu}'|^2)^{-2d}\|G_{\mu,\mu'}(t,\cdot)\|_{L^2} \|G_{\tilde{\mu},\tilde{\mu}'}(t,\cdot)\|_{L^2} .
\end{equation}
Begin by writing $\mu=(\mu_x,\mu_\xi) \in \RR^{d-1} \times \RR^{d-1}$ and consider cases
\[
|\mu_x -\tilde{\mu}_x| \geq 8 |\mu_\xi-\tilde{\mu}_\xi| \text{ and }|\mu_x -\tilde{\mu}_x| < 8 |\mu_\xi-\tilde{\mu}_\xi|.\]  In the first case, we have $|x_{t,0}(\mu)-x_{t,0}(\mu')| \approx |\mu-\mu'|$  which shows that
\[
(1+ 2^{-2j}|x-x_{t,0}(\mu)|^2)^{-2d} (1+ 2^{-2j}|x-x_{t,0}(\tilde{\mu})|^2)^{-2d}\lesssim (1+ 2^{-2j}|\mu-\tilde{\mu}|^2)^{-2d},
\]
and hence \eqref{museparation} follows.  In the second case, $|\xi_{t,0}(\mu)-\xi_{t,0}(\mu')| \approx |\mu-\mu'|$ instead.  We thus use the Plancherel identity for the semiclassical Fourier transform and simple convolution estimates to see that
$Q_{1,\mu}^j(t) Q_{2,\mu}^j(t) (G_{\mu,\mu'}(t,\cdot))$ is sufficiently concentrated in a $2^{-j}$ neighborhood of $\xi_{t,0}(\mu)$.
\end{proof}

\section{Proof of theorem \ref{thm:keybilinear}}\label{sec:bilinear}
The bound \eqref{keybilinear} asserts that the phase space separation of $B_{\mu,\omega}^j(s)f$, $B_{\mu',\omega'}^j(s')g$ yields a gain of $(h2^{-2j})^{\frac{d+1}{q}-\frac{d-1}{2}}$ over what would be obtained by a formal application of the linear bounds \eqref{linearbounds}.  The only difficulty with linear bounds is that the symbols $a^j_{\mu,\omega}$ from \eqref{tmuomegafixeds} are not in $S_0(1)$ since there are losses when they are differentiated in $(x,\eta)$. However, we will see that the decay of the symbol in $x,\xi$ ensures that linear bounds for these symbols are valid nonetheless.  In particular, linear bounds will apply to the case $h \approx 2^{2j}$, where there insufficient phase space separation to apply bilinear bounds.

Our first task is to reduce matters to working with compactly supported symbols instead of the rapidly decaying symbols $a^j_{\mu,\omega}$.  This step is not necessary when $h \approx 2^{2j}$, so the discussion from here through Lemma \ref{thm:FIOPDOsym} will assume that $J+1 \leq j \leq 0$. Recalling from \eqref{bjseparation} that
\[
 d\big(\mathscr{D}_{\mu}^j,\mathscr{D}_{\mu'}^j\big)  \approx 2^j, \qquad J+1 \leq j \leq 0
\]
we may take smooth bump functions $c_\mu^j(x,\xi)$ and $c_{\mu'}^j(x,\xi)$ such that
\[
 d\left(\supp(1-c_\mu^j),\mathscr{D}_{\mu}^j\right) \geq \delta 2^{j}
\]
for some sufficiently small, but uniform constant $\delta >0$ while
\begin{equation}\label{cjseparation}
 d\big(\supp(c_\mu^j),\supp(c_{\mu'}^j)\big)\approx 2^j.
\end{equation}
Now extend the $c_\mu^j$  to a family $\{ c_\mu^j(s,\cdot) \}_{s \in [-\veps,\veps]}$ by defining
\[
 c_\mu^j(s,x,\xi) := c_\mu^j\left(\kappa_{0,s}(x,\xi)\right) = c_\mu^j\left((\kappa_{s,0})^{-1}(x,\xi)\right)
\]
Taking $\veps>0$ sufficiently small in \eqref{epsdef}, we may assume that for every $s \in [-\veps,\veps]$
\begin{align}
 d\big(\supp(1-c_\mu^j)(s,\cdot)),\kappa_{s,0}(\mathscr{D}_{\mu}^j) \big) \geq \delta 2^{j},\label{psibseparation}\\
 d\Big(\supp(c_\mu^j(s,\cdot)),\supp(c_{\mu'}^j(s,\cdot))\Big)\approx 2^j.\label{psimutualseparation}
\end{align}
Now let $C_\mu^j(s)$ denote the $h$-pseudodifferential operator with symbol $c_\mu^j(s,\cdot)$.

We will first show that linear estimates can be applied to
\begin{multline}\label{subtractmain}
h^{-\frac{2d}{q}+(d-1)}2^{-j(d-1-\frac{2(d+1)}{q})} \times \\
\left\|R\left( B_{\mu,\omega}^j(s)f,B_{\mu',\omega'}^j(s')g\right)- R\left((C^j_\mu(s)\circ B_{\mu,\omega}^j(s))f,(C^j_{\mu'}(s')\circ B_{\mu',\omega'}^j(s'))g\right)\right\|_{L^{\frac q2}}
\end{multline}
so that it is bounded by the last three terms on the right hand side of \eqref{keybilinear}.  In particular, this process will lead us to define
\begin{equation}\label{tildeBdef}
\tilde{B}_{\mu,\omega}^j(s) :=  (h2^{-2j})^{-\frac{d+1}{q}+\frac{d-1}{2}} (I-C_{\mu}^j(s))\circ B_{\mu,\omega}^j(s).
\end{equation}
After applying linear bounds, we will be reduced to
\begin{multline}\label{bilinearcj}
\left\| R\left((C^j_\mu(s)\circ B_{\mu,\omega}^j(s))f,(C^j_{\mu'}(s')\circ B_{\mu,\omega}^j(s'))g\right)\right\|_{L^{\frac q2}([-\veps,\veps]_t \times \RR^{d-1}_x)}\lesssim\\
h^{\frac{2d}{q}-(d-1)}2^{j(d-1-\frac{2(d+1)}{q})}
\|B_{\mu,\omega}^j(s)f\|_{L^2(\RR^{d-1})}\|B_{\mu',\omega'}^j(s')g\|_{L^2(\RR^{d-1})},
\end{multline}
which will follow from bilinear estimates of S. Lee \cite{leebilinear}, using the separation in \eqref{psimutualseparation}.

\begin{lemma}
The composition $(I-C_\mu^j(s))\circ B_{\mu,\omega}^j(s)$ is an $h$-PDO with whose symbol $q(s,x,\xi)$ satisfies
\[
|\prtl^\alpha_{x,\xi} q(s,x,\xi)| \lesssim_{\alpha,N}
(h2^{-2j}) h^{-\frac{|\alpha|}{2}} (1+ h^{-\frac 12}d(x,\xi;\kappa_{s,0}(\mathscr{D}_{\mu}^j)))^{-N}.
\]
\end{lemma}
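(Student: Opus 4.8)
The plan is to view $q(s,\cdot)$ as the symbol of the composition of the two $h$-pseudodifferential operators $I-C_\mu^j(s)$ and $B_{\mu,\omega}^j(s)$, and to extract the factor $h2^{-2j}$ from the phase-space separation \eqref{psibseparation} combined with the rapid decay \eqref{actualdecay}. First I would record the symbol bounds for the two factors. Since $c_\mu^j$ is a bump function adapted to a set of diameter $\approx 2^j$ and $\kappa_{0,s}$ is a diffeomorphism with derivatives bounded uniformly for $|s|\le\veps$, the chain rule gives $|\prtl^\alpha_{x,\xi}(1-c_\mu^j)(s,x,\xi)|\lesssim_\alpha 2^{-j|\alpha|}\le h^{-|\alpha|/2}$, and \eqref{psibseparation} shows that $c_\mu^j(s,\cdot)\equiv1$ on the $\delta 2^j$-neighborhood of $\kappa_{s,0}(\mathscr{D}_\mu^j)$, so for every multiindex $\alpha$,
\[
\supp\bigl(\prtl^\alpha_{x,\xi}(1-c_\mu^j)(s,\cdot)\bigr)\subset\bigl\{(x,\xi):d\bigl(x,\xi;\kappa_{s,0}(\mathscr{D}_\mu^j)\bigr)\ge\delta 2^j\bigr\}.
\]
The symbol $b_{\mu,\omega}^j(s,\cdot)$ satisfies \eqref{actualdecay}. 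Both symbols therefore lie in $S_{1/2}(1)$, so by the standard composition theory for this borderline class $(I-C_\mu^j(s))\circ B_{\mu,\omega}^j(s)$ is an $h$-PDO whose symbol $q(s,\cdot)$ is in $S_{1/2}(1)$; what the lemma adds is the uniform prefactor $h2^{-2j}\le1$ (note $2^{2j}\ge h$) and the decay in $d(\cdot;\kappa_{s,0}(\mathscr{D}_\mu^j))$.

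To see these I would rescale $(x,\xi)\mapsto h^{1/2}(x,\xi)$: under the unitary $Uf(z)=h^{(d-1)/4}f(h^{1/2}z)$ the $h$-quantization of a symbol $\sigma$ is conjugated to the $\tilde h$-quantization, $\tilde h:=h^{1/2}$, of $\tilde\sigma(z,\zeta):=\sigma(h^{1/2}z,h^{1/2}\zeta)$, so it suffices to bound $\tilde q(s,\cdot)=\widetilde{1-c_\mu^j}(s,\cdot)\,\#_{\tilde h}\,\widetilde{b_{\mu,\omega}^j}(s,\cdot)$ in the classical calculus. Writing $\rho:=h^{-1/2}2^j\ge1$, $\mathscr E:=h^{-1/2}\kappa_{s,0}(\mathscr{D}_\mu^j)$, and $\tilde d:=d(\cdot;\mathscr E)$, the rescaled symbols lie in $S(1)$ uniformly, with $|\prtl^\alpha\widetilde{1-c_\mu^j}(s,\cdot)|\lesssim(h^{1/2}2^{-j})^{|\alpha|}\le1$ and $\supp\prtl^\alpha\widetilde{1-c_\mu^j}(s,\cdot)\subset\{\tilde d\ge\delta\rho\}$, while $|\prtl^\alpha\widetilde{b_{\mu,\omega}^j}(s,\cdot)|\lesssim_{\alpha,N}(1+\tilde d)^{-N}$. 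In the classical expansion $\tilde q\sim\sum_\alpha\frac{\tilde h^{|\alpha|}}{\alpha!\,i^{|\alpha|}}\prtl^\alpha_\zeta\widetilde{1-c_\mu^j}(s,\cdot)\,\prtl^\alpha_z\widetilde{b_{\mu,\omega}^j}(s,\cdot)$ each term is supported in $\{\tilde d\ge\delta\rho\}$, where $|\prtl^\alpha_z\widetilde{b_{\mu,\omega}^j}(s,\cdot)|\lesssim(1+\tilde d)^{-N-2}\lesssim(1+\delta\rho)^{-2}(1+\tilde d)^{-N}\lesssim\rho^{-2}(1+\tilde d)^{-N}$; combined with $\tilde h^{|\alpha|}(h^{1/2}2^{-j})^{|\alpha|}\le1$ this makes the $\alpha$-th term $\lesssim\rho^{-2}(1+\tilde d)^{-N}$. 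The remainder after $M$ terms is $\tilde h^M$ times an oscillatory integral in the transversal variables whose amplitude carries a factor $\prtl^\beta\widetilde{1-c_\mu^j}(s,z,\zeta+\eta)$ supported in $\{\tilde d(z,\zeta+\eta)\ge\delta\rho\}$; integrating by parts exactly as in the treatment of \eqref{modelamplitude}--\eqref{ctdecay} in the proof of Theorem \ref{thm:egorov} shows that, up to rapidly decaying tails in the transversal variables, this remainder is supported where $\tilde d(z,\zeta)\gtrsim\rho$ and inherits the weight $(1+\tilde d(z,\zeta))^{-N}$, hence is again $\lesssim\rho^{-2}(1+\tilde d)^{-N}$ (in fact $O(\tilde h^M)$-smaller). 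Adding up, $|\prtl^\gamma_{z,\zeta}\tilde q(s,\cdot)|\lesssim_{\gamma,N}\rho^{-2}(1+\tilde d)^{-N}$, and undoing the rescaling --- $\prtl_{x,\xi}=h^{-1/2}\prtl_{z,\zeta}$, $\rho^{-2}=h2^{-2j}$, $\tilde d=h^{-1/2}d(\cdot;\kappa_{s,0}(\mathscr{D}_\mu^j))$ --- yields the stated bound for $q(s,\cdot)$.

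The step I expect to be the main obstacle is the remainder term: because we work in the critical class $S_{1/2}(1)$ (equivalently, because the expansion parameter after rescaling is $\tilde h=h^{1/2}$, not $h$), the symbolic calculus does not furnish extra powers of $h$ in the lower-order terms for free, so one cannot simply argue ``leading term plus $O(h)$'' --- one must verify that both the $h2^{-2j}$ gain and the decay in $d(\cdot;\kappa_{s,0}(\mathscr{D}_\mu^j))$ propagate uniformly through every term and through the remainder, and this is precisely where the integration-by-parts bookkeeping from the proof of Theorem \ref{thm:egorov} is reused. The one genuinely new input is the support separation \eqref{psibseparation}; everything else is a routine composition computation.
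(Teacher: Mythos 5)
Your plan isolates exactly the same mechanism the paper uses: on the support of $1-c_\mu^j$ the rescaled distance $\tilde d=h^{-1/2}d(\cdot;\kappa_{s,0}(\mathscr{D}_\mu^j))$ is $\gtrsim h^{-1/2}2^j$, and two powers of the decay from \eqref{actualdecay} then produce the factor $h2^{-2j}$ while the remaining powers give the stated decay in $\tilde d$. However, the specific route through a rescaled symbolic-calculus expansion has two technical errors that matter precisely because this is the critical class.

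First, the conjugation statement is off. With $Uf(z)=h^{(d-1)/4}f(h^{1/2}z)$ one gets $U\,\mathrm{Op}_h(\sigma)\,U^{-1}=\mathrm{Op}_1(\tilde\sigma)$, the \emph{classical} (unit--parameter) quantization of $\tilde\sigma(z,\zeta)=\sigma(h^{1/2}z,h^{1/2}\zeta)$, not the $\tilde h=h^{1/2}$ quantization. Consequently the formal composition expansion after rescaling carries coefficient $1^{|\alpha|}$, not $\tilde h^{|\alpha|}$: for $S(1)$ symbols in the classical calculus there is no free small parameter in the expansion, which is exactly the criticality warned about in the preamble to Theorem \ref{thm:egorov}. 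The only genuine smallness per term is the factor $(h^{1/2}2^{-j})^{|\alpha|}\le \tfrac12$ coming from the derivative bounds on $\widetilde{1-c_\mu^j}$, which you also invoke, so each term is indeed $\lesssim(h^{1/2}2^{-j})^{|\alpha|}\rho^{-2}(1+\tilde d)^{-N}$ --- but for the stated reason, not because of a quantization parameter.

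Second, and more importantly, the remainder claim ``$O(\tilde h^M)$-smaller'' is unsubstantiated and, as stated, false (the correct per-step gain is $h^{1/2}2^{-j}\ge\tilde h$, so the remainder is $O((h^{1/2}2^{-j})^M)$, a \emph{weaker} bound). Controlling that remainder, with the decay in $\tilde d$, is exactly the content that cannot be borrowed for free from a symbolic calculus; one must go back to the oscillatory integral and integrate by parts --- which is what the paper does directly without any Taylor expansion. The paper writes
\[
q(s,x,\xi)=\frac{1}{(2\pi h)^{d-1}}\int e^{\frac ih\langle x-y,\eta-\xi\rangle}(1-c_\mu^j)(s,x,\eta)\,b_{\mu,\omega}^j(s,y,\xi)\,dy\,d\eta,
\]
observes that weighted derivatives $(h^{1/2}\partial_{x,\xi})^\alpha$ reproduce oscillatory integrals of the same type (via $\partial_{x_i}e^{\frac ih\langle x-y,\eta-\xi\rangle}=-\partial_{y_i}e^{\cdots}$, etc.), and then integrates by parts with the phase-preserving operator $\bigl(1+(h^{-1/2}(\xi-\eta,x-y))\cdot(h^{1/2}D_{(y,\eta)})\bigr)/\bigl(1+h^{-1}|(\xi-\eta,x-y)|^2\bigr)$. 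This immediately gives a rapidly decaying kernel in $h^{-1/2}|(x-y,\xi-\eta)|$; using \eqref{psibseparation} on $\supp(1-c_\mu^j)$ to convert two powers of the decay factor into $h2^{-2j}$, and the remaining powers into the distance weight, finishes it in one pass. Your expansion adds a layer that ultimately must be peeled away by the same integration by parts, so I would drop the Taylor step and go directly to the paper's argument; if you keep the expansion, replace $\tilde h^{|\alpha|}$ by $1^{|\alpha|}$, justify the $(h^{1/2}2^{-j})^M$ remainder bound, and in particular show that the remainder inherits the decay in $\tilde d$ --- which is the genuinely nontrivial point you currently only gesture at.
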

The lemma thus ensures that $\tilde{B}_{\mu,\omega}^j(s)$ as defined in \eqref{tildeBdef} satisfies \eqref{actualdecay} since $h2^{-2j}\leq (h2^{-2j})^{\frac{d+1}{q}-\frac{d-1}{2}}$ for $q \geq 2$.
\begin{proof}
Observe that
\[
q(s,x,\xi)=
\frac{1}{(2\pi h)^{d-1}} \int_{\RR^{2(d-1)}}e^{\frac ih \langle x-y,\eta-\xi\rangle} (1-c_\mu^j)(s,x,\eta)b_{\mu,\omega}^j(s,y,\xi)\,dyd\eta,
\]
and consider more general oscillatory integrals of the form
\begin{equation}\label{compositionmodel}
\frac{1}{(2\pi h)^{d-1}} \int_{\RR^{2(d-1)}}e^{\frac ih \langle x-y,\eta-\xi\rangle} A(x,\eta,y,\xi)\,dyd\eta.
\end{equation}
where $\supp(A(\cdot,y,\xi)) \subset \supp((1-c_\mu^j)(s,\cdot))$ for every $(y,\xi)$ and
\[
|\prtl^\alpha A(x,\eta,y,\xi)| \lesssim_{\alpha,N}  h^{-\frac{|\alpha|}{2}} (1+ h^{-\frac 12}d(y,\xi;\kappa_{s,0}(\mathscr{D}_{\mu}^j)))^{-N}
\]
with implicit constants depending only on those defining $c_\mu^j$, $b_\mu^j$ and their derivatives. The main idea is that through induction, $(h^{\frac 12}\prtl_{x,\xi})^\alpha q(s,x,\xi)$ is an oscillatory integral of this type, which follows easily by integration by parts since
\[
\prtl_{x_i}e^{\frac ih \langle x-y,\eta-\xi\rangle} = -\prtl_{y_i} e^{\frac ih \langle x-y,\eta-\xi\rangle},
\qquad
\prtl_{\xi_i}e^{\frac ih \langle x-y,\eta-\xi\rangle} = -\prtl_{\eta_i} e^{\frac ih \langle x-y,\eta-\xi\rangle}.
\]
Hence we are left to show that
\begin{equation}\label{compositionmodelbound}
|\eqref{compositionmodel}| \lesssim h2^{-2j}(1+ h^{-\frac 12}d(x,\xi;\kappa_{s,0}(\mathscr{D}_{\mu}^j)))^{-N}.
\end{equation}
The differential operator
\[
\frac{1+(h^{-\frac 12}(\xi-\eta,x-y))\cdot(h^{\frac 12} D_{(y,\eta)})}{1+h^{-1}|(\xi-\eta,x-y)|^2}
\]
preserves the phase function in \eqref{compositionmodel}, so integration by parts shows that
\begin{multline*}
|\eqref{compositionmodel}| \lesssim_N \\
\frac{1}{(2\pi h)^{d-1}} \int (1+h^{-\frac 12}|(x-y,\xi-\eta)|)^{-3N}\left(1+ h^{-\frac 12}d\left(y,\xi;\kappa_{s,0}(\mathscr{D}_{\mu}^j)\right)\right)^{-2N} dyd\eta
\end{multline*}
and the domain of integration can be restricted to $\eta \in \supp((1-c_\mu^j)(s,x,\cdot))$.  The first factor in the integrand allows us to replace the pair $(y,\xi)$ in the second by either $(x,\eta)$ or $(x,\xi)$.  In particular, when $N \geq 2$, the integrand is dominated by
\begin{equation*}
  (1+h^{-\frac 12}|(x-y,\xi-\eta)|)^{-N}\big(h^{-\frac 12}d(x,\eta;\kappa_{s,0}(\mathscr{D}_{\mu}^j))\big)^{-2}
  \Big(1+ h^{-\frac 12}d\big(x,\xi;\kappa_{s,0}(\mathscr{D}_{\mu}^j)\big)\Big)^{-N}.
\end{equation*}
Since $(x,\eta) \in \supp((1-c_\mu^j)(s,\cdot))$, \eqref{psibseparation} shows that the second factor here is $O(h2^{-2j})$. Since the first factor means that integration in $y,\eta$ yields a gain of $O(h^{d-1})$, the bound \eqref{compositionmodelbound} follows.
\end{proof}

\begin{lemma}\label{thm:FIOPDOsym} Let $\tilde{c}_\mu^j(t,s,x,\xi) = c_\mu^j(s,d_\eta\phi(t,s,x,\xi),\xi)$
There exists symbols $q^j_{\mu,\omega}$ and $r^j_{\mu,\omega}$ such that
\[
T_{\mu,\omega}^{j,s}(C^j_\mu(s)f)=\frac{1}{(2\pi h)^{d-1}} \iint e^{\frac ih( \phi(t,s,x,\xi)-\langle y, \xi \rangle )} (q^j_{\mu,\omega}+r^j_{\mu,\omega})(t,s,x,\xi) f(y)\, dy d\xi ,
\]
with $|\prtl^\alpha_{x,\xi} q^j_{\mu,\omega}(t,s,\cdot)|\lesssim_\alpha 2^{-j|\alpha|}$ and
\begin{equation}\label{symbolsupp}
\supp(q^j_{\mu,\omega}(t,s,\cdot)) \subset \supp(\tilde{c}^j_\mu(t,s,\cdot)) = \overline{\{(x,\xi):c_\mu^j(s,d_\eta\phi(t,s,x,\xi),\xi) \neq 0 \}}
\end{equation}
while
\begin{equation}\label{rmuregularity}
|\prtl^\alpha_{x,\xi} r^j_{\mu,\omega}(t,s,\cdot)|
\lesssim_{\alpha,N}
(h2^{-2j})2^{-j|\alpha|}
\left(1+h^{-1}2^jd\left(x,\xi;\supp(\tilde{c}^j_\mu(t,s,\cdot))\right)\right)^{-N}.
\end{equation}
\end{lemma}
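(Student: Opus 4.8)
The plan is to compute the composition $T^{j,s}_{\mu,\omega}\circ C^j_\mu(s)$ directly as an oscillatory integral, perform a Kuranishi-type reduction that turns the phase into the model quadratic form studied in the proof of Theorem~\ref{thm:egorov}, and then split off $q^j_{\mu,\omega}$ and $r^j_{\mu,\omega}$ via the same finite stationary-phase expansion used there. First I would write, using \eqref{tmuomegafixeds} and the standard quantization of $c_\mu^j(s,\cdot)$, that for Schwartz $f$ (and up to a negligible kernel coming from the frequency localization \eqref{frequencyerror}, which can be absorbed into $r^j_{\mu,\omega}$)
\[
\big(T^{j,s}_{\mu,\omega}(C^j_\mu(s)f)\big)(t,x)=\frac{1}{(2\pi h)^{d-1}}\iint e^{\frac ih(\phi(t,s,x,\xi)-\langle y,\xi\rangle)}\,Q(t,s,x,\xi)\,f(y)\,dy\,d\xi ,
\]
where
\[
Q(t,s,x,\xi)=\frac{1}{(2\pi h)^{d-1}}\iint e^{\frac ih(\phi(t,s,x,\eta)-\phi(t,s,x,\xi)+\langle y,\xi-\eta\rangle)}\,a^j_{\mu,\omega}(t,s,x,\eta)\,c_\mu^j(s,y,\xi)\,dy\,d\eta .
\]
Writing $\phi(t,s,x,\eta)-\phi(t,s,x,\xi)=\langle\eta-\xi,\Psi(t,s,x,\eta,\xi)\rangle$ with $\Psi=\int_0^1 d_\eta\phi(t,s,x,\xi+r(\eta-\xi))\,dr$ (so $\Psi(t,s,x,\xi,\xi)=d_\eta\phi(t,s,x,\xi)$), and then changing variables $\sigma=\xi-\eta$, $w=y-\Psi(t,s,x,\xi-\sigma,\xi)$, the phase becomes $\langle w,\sigma\rangle$ and
\[
Q(t,s,x,\xi)=\frac{1}{(2\pi h)^{d-1}}\iint e^{\frac ih\langle w,\sigma\rangle}\,A(w,\sigma)\,dw\,d\sigma,\quad A(w,\sigma):=a^j_{\mu,\omega}(t,s,x,\xi-\sigma)\,c_\mu^j\big(s,w+\Psi(t,s,x,\xi-\sigma,\xi),\xi\big).
\]

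Next I would record the two features of $A$ that drive the rest. Since $a^j_{\mu,\omega}$ obeys the bounds of Theorem~\ref{thm:aothm} (each $(x,\eta)$-derivative costing a factor $2^{-j}$, with rapidly decreasing prefactors that are $O(1)$), $c_\mu^j$ is a bump on scale $2^j$, and $\Psi$ with all its derivatives is $O(1)$, the chain rule gives $|\partial^\gamma_{w,\sigma,x,\xi}A|\lesssim 2^{-j|\gamma|}$ with constants uniform in $\mu,\omega,j$; moreover $A$ is compactly supported in $w$ and rapidly decreasing in $\sigma$. The geometric point is that on $\{A\ne 0\}$ one has $d\big(x,\xi;\supp(\tilde{c}_\mu^j(t,s,\cdot))\big)\lesssim |(w,\sigma)|$: indeed $A(w,\sigma)\ne 0$ forces $(w+\Psi(t,s,x,\xi-\sigma,\xi),\xi)\in\supp(c_\mu^j(s,\cdot))$, and since $\Psi(t,s,x,\xi-\sigma,\xi)=d_\eta\phi(t,s,x,\xi)+O(|\sigma|)$ while $x'\mapsto d_\eta\phi(t,s,x',\xi)$ is a diffeomorphism with derivative $I+O(\veps)$ by \eqref{genfcnnondeg}, there is $x'$ with $|x'-x|\lesssim|(w,\sigma)|$ and $(d_\eta\phi(t,s,x',\xi),\xi)\in\supp(c_\mu^j(s,\cdot))$, i.e. $(x',\xi)\in\supp(\tilde{c}_\mu^j(t,s,\cdot))$.

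With $A$ in this model form, I would apply the finite Taylor-in-$h$ expansion \eqref{Jhuexpansion} with $M=1$, exactly as in the proof of Theorem~\ref{thm:egorov}: with $\widetilde{P}=-i\langle D_w,D_\sigma\rangle$,
\[
Q(t,s,x,\xi)=A(0,0)+h\int_0^1 J(\tau h,\widetilde{P}A)\,d\tau ,
\]
$J(\tau h,\cdot)$ denoting the model integral at parameter $\tau h$. Then I would set $q^j_{\mu,\omega}(t,s,x,\xi):=A(0,0)=a^j_{\mu,\omega}(t,s,x,\xi)\,\tilde{c}_\mu^j(t,s,x,\xi)$, which is supported in $\supp(\tilde{c}_\mu^j(t,s,\cdot))$ and satisfies $|\partial^\alpha_{x,\xi}q^j_{\mu,\omega}|\lesssim 2^{-j|\alpha|}$, and $r^j_{\mu,\omega}(t,s,x,\xi):=h\int_0^1 J(\tau h,\widetilde{P}A)\,d\tau$. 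Since $\widetilde{P}A$ still satisfies $|\partial^\gamma(\widetilde{P}A)|\lesssim 2^{-j(|\gamma|+2)}$ and vanishes off $\{A\ne 0\}$, the analysis of the model integral \eqref{modelamplitude} in the proof of Theorem~\ref{thm:egorov} — carried out with the symbol scale $\theta$ there taken to be $2^j$, so that $h\theta^{-1}=h2^{-j}$ — gives $|\partial^\alpha_{x,\xi}J(\tau h,\widetilde{P}A)|\lesssim 2^{-j|\alpha|}\,2^{-2j}\big(1+h^{-1}2^j d(x,\xi;\supp\tilde{c}_\mu^j(t,s,\cdot))\big)^{-N}$ uniformly in $\tau\in(0,1]$; here the $(x,\xi)$-derivatives are harmless since the model phase is independent of $(x,\xi)$, and the rapid decay comes from integrating by parts in $(w,\sigma)$ once $d(x,\xi;\supp\tilde{c}_\mu^j)\gg h2^{-j}$, using the support bound above. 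Multiplying by $h$ yields \eqref{rmuregularity}.

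The step I expect to be the \emph{main obstacle} is the reduction in the first paragraph: executing the two changes of variables so the phase becomes exactly $\langle w,\sigma\rangle$, verifying that the merely rapidly-decreasing (rather than compactly supported) factor $a^j_{\mu,\omega}(t,s,x,\xi-\sigma)$ causes no loss in the $S_{1/2}(1)$-type bounds for $A$, and establishing the linear comparability $d(x,\xi;\supp\tilde{c}_\mu^j)\lesssim|(w,\sigma)|$ on $\{A\ne0\}$. Once these are in place, the remainder is a direct transcription of the stationary-phase-with-remainder computation already performed in the proof of Theorem~\ref{thm:egorov}, with symbol scale $2^j$ in place of $h^{1/2}$.
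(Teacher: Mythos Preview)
Your proposal is correct and follows essentially the same route as the paper. The paper's proof is a two-sentence sketch: write the composition kernel as the oscillatory integral
\[
\frac{1}{(2\pi h)^{d-1}}\iint e^{\frac ih(\phi(t,s,x,\eta)+\langle y,\xi-\eta\rangle)}a^j_{\mu,\omega}(t,s,x,\eta)c^j_\mu(s,y,\xi)\,dy\,d\eta,
\]
observe that the $(y,\eta)$-critical points are $\eta=\xi$, $y=d_\eta\phi(t,s,x,\xi)$, and then invoke ``a variation on the stationary phase arguments in Theorem~\ref{thm:egorov} (taking $M=1$ in the analog of \eqref{Jhuexpansion}).'' Your Kuranishi change of variables $(y,\eta)\mapsto(w,\sigma)$, the identification $q^j_{\mu,\omega}=a^j_{\mu,\omega}\cdot\tilde{c}^j_\mu$ as the $M=1$ leading term, and the remainder analysis with symbol scale $\theta=2^j$ are precisely the details the paper leaves to the reader.
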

Again, since $h2^{-2j}\leq (h2^{-2j})^{\frac{d+1}{q}-\frac{d-1}{2}}$ the gain satisfied by $r^j_{\mu,\omega}$ will be sufficient.
\begin{proof}
The Schwartz kernel of the composition $T^{j,s}_{\mu,\omega}\circ C^j_\mu(s)$ is the oscillatory integral
\begin{equation}\label{FIOPDOsym}
\frac{1}{(2\pi h)^{d-1}}\iint e^{\frac ih(\phi(t,s,x,\eta)+\langle y,\xi-\eta\rangle)}a^j_{\mu,\omega}(t,s,x,\eta)c^j_\mu(t,s,y,\xi)\,dyd\eta .
\end{equation}
Its critical points satisfy $d_\eta\phi(t,s,x,\eta) = y$ and $\xi=\eta$.  The lemma thus follows from a variation on the stationary phase arguments in Theorem \ref{thm:egorov} (taking $M=1$ in the analog of \eqref{Jhuexpansion}).
\end{proof}

\begin{lemma}\label{lem:linearbounds}
Let $T_\mu^j f$ be an oscillatory integral operator defined by
\[
(T_\mu^j f)(t,x) = \frac{1}{(2\pi h)^{d-1}}\iint_{\RR^{2(d-1)}} e^{\frac ih (\phi(t,s,x,\xi)-\langle y,\xi \rangle)} a_\mu^j(t,s,x,\xi) f(y)\,dyd\xi ,
\]
where $J \leq j \leq 0$ and
\begin{equation}\label{ajdecay}
|\prtl^\alpha_{x,\xi} a_\mu^j(t,s,\cdot)| \lesssim_\alpha 2^{-j|\alpha|} (1+2^{-j}|x-x_{t,0}(\mu)|)^{-2d}(1+2^{-j}|\xi-\xi_{s,0}(\mu)|)^{-2d}.
\end{equation}
Then for some implicit constant depending only on the bounds for $a^j_\mu$, $\phi$ and their derivatives, we have
\[
\|T_\mu^j f\|_{L^q([-\veps,\veps]_t \times \RR^{d-1}_x)} \lesssim
h^{-\frac{d-1}{2}(\frac 12-\frac 1q)}\|f\|_{L^2(\RR^{d-1}_y)}.
\]
\end{lemma}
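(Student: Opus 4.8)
The plan is to separate the two decay properties of the amplitude: the decay in $x$ is absorbed into a bounded multiplier acting on the output, while the decay in the dual variable $\eta$ localizes frequencies and thereby reduces matters to the already established linear bounds \eqref{linearbounds}. Write $a_\mu^j(t,s,x,\eta)=m(t,x)\,\tilde a(t,s,x,\eta)$ with $m(t,x)=(1+2^{-j}|x-x_{t,0}(\mu)|)^{-2d}$; by \eqref{ajdecay} the factor $\tilde a$ is bounded, obeys $|\partial^\alpha_{x,\eta}\tilde a(t,s,\cdot)|\lesssim 2^{-j|\alpha|}(1+2^{-j}|\eta-\xi_{s,0}(\mu)|)^{-2d}$, and is supported where the parametrix amplitude $a$ of \eqref{parametrix} is elliptic. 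Then $T_\mu^j=M_\mu^j\circ\tilde T_\mu^j$, where $M_\mu^j$ is multiplication by $m$ on functions of $(t,x)$ (so $\|M_\mu^j\|_{L^\infty}\leq 1$) and $\tilde T_\mu^j$ has the phase $\phi(t,s,x,\eta)-\langle y,\eta\rangle$ and amplitude $\tilde a$, so it suffices to bound $\|\tilde T_\mu^jf\|_{L^q}$. Since $x\mapsto d_\eta\phi(t,s,x,\eta)$ is invertible (cf. \eqref{genfcnnondeg}), one application of the Fourier integral/pseudodifferential calculus (as in \cite{zworski}, stopping at the first remainder) gives
\[
\tilde T_\mu^j = S(t,s)\circ P_\mu^j + (h2^{-2j})\,\tilde T_\mu^{j,(1)},
\]
where $P_\mu^j$ is the $h$-pseudodifferential operator with symbol $\tilde a(t,s,x_{t,s}(y,\eta),\eta)/a(t,s,x_{t,s}(y,\eta),\eta)$, $x_{t,s}(\cdot,\eta)$ being the inverse of $d_\eta\phi(t,s,\cdot,\eta)$; this symbol lies in $S_{1/2}(1)$, so $P_\mu^j$ is bounded on $L^2$ by the Calder\'on--Vaillancourt theorem, and $\tilde T_\mu^{j,(1)}$ is an oscillatory integral operator with the same phase and with amplitude in the same class as $\tilde a$ (bounded, $2^{-j|\alpha|}$-loss derivatives, rapidly decreasing in $\eta$ on scale $2^j$ about $\xi_{s,0}(\mu)$).

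For the main term, \eqref{linearbounds} and the $L^2$-boundedness of $P_\mu^j$ yield $\|S(t,s)(P_\mu^jf)\|_{L^q}\lesssim h^{-\frac{d-1}{2}(\frac12-\frac1q)}\|P_\mu^jf\|_{L^2}\lesssim h^{-\frac{d-1}{2}(\frac12-\frac1q)}\|f\|_{L^2}$. For the remainder, the $\eta$-decay of its amplitude on scale $2^j$ shows via non-stationary phase that, modulo $O(h^\infty)$, $\tilde T_\mu^{j,(1)}f(t,\cdot)$ has semiclassical frequency support in a ball of radius $\lesssim 2^j$, so Bernstein's inequality together with the uniform $L^2\!\to\!L^2$ bound for the $S_{1/2}(1)$-type operator $\tilde T_\mu^{j,(1)}(t)$ gives, for each $t$, $\|\tilde T_\mu^{j,(1)}f(t,\cdot)\|_{L^q_x}\lesssim (2^j/h)^{(d-1)(\frac12-\frac1q)}\|f\|_{L^2}$. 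Integrating over the bounded range of $t$ and multiplying by $h2^{-2j}$, it remains to observe that $(h2^{-2j})(2^j/h)^{(d-1)(\frac12-\frac1q)}\lesssim h^{-\frac{d-1}{2}(\frac12-\frac1q)}$: since $2^j\geq h^{1/2}$ and $(d-1)(\frac12-\frac1q)\leq \frac{d-1}{d+1}<2$ in the relevant range, the exponent $-2+(d-1)(\frac12-\frac1q)$ of $2^j$ is negative, hence $2^{j(-2+(d-1)(\frac12-\frac1q))}\leq h^{-1+\frac12(d-1)(\frac12-\frac1q)}$, which combined with the power of $h$ in front is exactly $h^{-\frac{d-1}{2}(\frac12-\frac1q)}$. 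Adding the two contributions and using $\|M_\mu^j\|_{L^\infty}\leq1$ completes the proof.

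The steps that require genuine care are the uses of the symbolic calculus and of Bernstein's inequality for the $S_{1/2}(1)$-type classes at play here: the calculus does not produce a convergent full expansion when $2^{2j}\approx h$, but the single step displayed above, with its explicit $O(h2^{-2j})$ remainder, is all that is needed, and the crude Bernstein bound for that remainder absorbs the loss precisely because the frequency ball has radius $2^j\geq h^{1/2}$; I expect verifying that $\tilde T_\mu^{j,(1)}$ does lie in the stated class and is frequency-localized as claimed to be the main technical obstacle. Note that $2^{2j}\approx h$ is exactly the regime in which this lemma gets invoked, the bilinear gains being unavailable there; when $2^{2j}\gg h$ one could instead rescale $(x,\eta,y)\mapsto 2^j(x,\eta,y)$ to reduce directly to \eqref{linearbounds} at the smaller parameter $h2^{-2j}$.
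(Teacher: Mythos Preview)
The argument has two gaps, both biting precisely at $j=J$ where this lemma is actually invoked.

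First, the symbol you write for $P_\mu^j$ depends on $t$ (through $\tilde a(t,\cdot)$, $a(t,\cdot)$, and $x_{t,s}$), so $P_\mu^j=P_\mu^j(t)$. The space--time bound \eqref{linearbounds} applies only when $S(t,s)$ acts on a \emph{fixed} $L^2$ function; it says nothing about $t\mapsto S(t,s)(P_\mu^j(t)f)$. In fact, by your own construction the leading amplitude of $S(t,s)\circ P_\mu^j(t)$ is exactly $\tilde a$, so $S(t,s)\circ P_\mu^j(t)=\tilde T_\mu^j+(h2^{-2j})R$ and the displayed decomposition is a tautology: bounding the ``main term'' in $L^q_{t,x}$ is the very problem you started with. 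Freezing $P$ at a single time (say $t=s$) breaks the circularity, but then the remainder amplitude is only $O(|t-s|)$, not $O(h2^{-2j})$, and the numerics collapse.

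Second, the obstacle you flag --- frequency localization of the remainder --- is not merely technical; the claim is false once $m(t,x)$ has been stripped off. The output frequency at $(t,x)$ is $d_x\phi(t,s,x,\eta)$; with $\eta$ in a $2^j$-ball but $x$ free to range over the whole chart, this varies by a fixed amount of order $|t-s|$ (since $\phi(s,s,x,\eta)=\langle x,\eta\rangle$ gives $d_x^2\phi=O(|t-s|)$), not by $O(2^j)$. Bernstein with a unit-scale frequency ball yields only $h^{-(d-1)(\frac12-\frac1q)}$, and $(h2^{-2j})\cdot h^{-(d-1)(\frac12-\frac1q)}$ exceeds $h^{-\frac{d-1}{2}(\frac12-\frac1q)}$ when $2^{2j}\approx h$. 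Retaining the $x$-weight would restore the $2^j$-scale frequency localization, but then you lose the clean factorization through $M_\mu^j$ and are back to the first problem.

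The paper instead carries out the rescaling you mention only in passing, uniformly in $J\le j\le 0$: send $(x,\xi)\mapsto 2^j(x,\xi)$ so the amplitude lands in $S_0(1)$ with unit-scale decay and the effective frequency parameter becomes $h2^{-2j}$, then run the Carleson--Sj\"olin argument directly (H\"ormander's $L^2$ theorem at $q=2$, and $TT^*$ with stationary phase at $q=\tfrac{2(d+1)}{d-1}$). The only extra work is that the rescaled amplitude is rapidly decaying rather than compactly supported; this is handled by tiling into unit $(x,\xi)$-cubes and summing via the $\langle\cdot\rangle^{-2d}$ weights.
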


We postpone a thorough proof of these bounds to the end of the section, though as noted, they essentially are a consequence of the standard theory of Carleson-Sj\"olin integral operators.  For now we note that this will settle Theorem \ref{thm:keybilinear} when $j=J$ (strictly speaking this means that we take $\tilde{B}_{\mu,\omega}^J(s)=0$ as the enlarged family is not needed in this case).  Moreover, the linear estimates imply that \eqref{subtractmain} is bounded by the last three terms in \eqref{keybilinear}. Indeed, the lemma and the definition of $\tilde{B}_{\mu,\omega}^j(s)$ establishes that
\begin{multline*}
\left\|T^{j,s}_{\mu,\omega} \left((I-C_\mu^j(s))\circ B^j_{\mu,\omega}(s)f\right) \right\|_{L^q([-\veps,\veps]_t \times \RR^{d-1}_x)} \lesssim\\
h^{-\frac{d-1}{2}(\frac 12-\frac 1q)}(h2^{-2j})^{\frac{d+1}{q} - \frac{d-1}{2}}
\|\tilde{B}_{\mu,\omega}^j(s)f\|_{L^2(\RR^{d-1}_y)},
\end{multline*}
which yields the desired gain when paired with the linear bound on the second factor. Moreover, the gain of $h2^{-2j}\leq (h2^{-2j})^{\frac{d+1}{q}-\frac{d-1}{2}}$ in the symbol of $r^j_{\mu,\omega}$ shown in \eqref{rmuregularity} means that it suffices to prove \eqref{bilinearcj} with the compactly supported $q^j_{\mu,\omega}$ replacing $a^j_{\mu,\omega}$ in the definition of $T^{j,s}_{\mu,\omega}$.

\subsection{Bilinear estimates}\label{ss:bilinearestimates}
We now complete the proof of \eqref{bilinearcj}.  Set
\begin{equation}\label{uidef}
 u_1(\eta) = \frac{2^{j(d-1)}}{(2\pi h)^{(d-1)}}\mathscr{F}_h(B_{\mu,\omega}^j(s)f)(2^j\eta), \quad
 u_2(\eta) = \frac{2^{j(d-1)}}{(2\pi h)^{(d-1)}}\mathscr{F}_h(B_{\mu',\omega'}^j(s')g)(2^j\eta)
\end{equation}
where $\mathscr{F}_h$ denotes the semiclassical Fourier transform.  Observe that by the semiclassical Plancherel identity,
\begin{equation}\label{dilatedplancherel}
 \|u_1\|_{L^2(\RR^{d-1})} =
  \left(\frac{2^{j}}{2\pi h}\right)^{\frac{d-1}{2}}\|B_{\mu,\omega}^j(s)f\|_{L^2(\RR^{d-1})},
\end{equation}
and an analogous computation establishes the $L^2$ norm of $u_2$.

By the discussion following Lemma \ref{lem:linearbounds}, we may replace the symbol $a^j_{\mu,\omega}$ in the definition of $T^{j,s}_{\mu,\omega}$ by the compactly supported $q^j_{\mu,\omega}$ defined in Lemma \ref{thm:FIOPDOsym}. Next set
\[
 \tilde{\phi}(t,s,x,\eta) = 2^{-2j}\phi(t,s,2^jx,2^j\eta), \qquad \tilde{p}(t,x,\xi) = 2^{-2j}p(t,2^jx,2^j\xi),
\]
so that $\tilde{\phi}$ solves the eikonal equation
\begin{equation}\label{tildeeikonal}
 \prtl_t \tilde{\phi}(t,s,x,\eta) + \tilde{p}(t,x,d_x\tilde{\phi}(t,s,x,\eta))=0.
\end{equation}
Also, let $\tilde{\kappa}_{t,s}(x,\xi)$ denote the time $t$ value of the integral curve determined by the Hamiltonian vector field of $\tilde{p}(r,x,\xi)$ such that $\tilde{\kappa}_{r,s}(x,\xi)\big|_{r=s} = (x,\xi)$. It is verified that the $\tilde{\kappa}_{t,s}$ are related to the original mappings $\kappa_{t,s}$ by
\[
 \tilde{\kappa}_{t,s}(x,\xi) = 2^{-j}\kappa_{t,s}(2^j x,2^j\xi).
\]
Hence $\tilde{\phi}$ is a generating function for $\tilde{\kappa}_{t,s}$, satisfying relations analogous to \eqref{generatingfunction}.  The mixed Hessian $d_x d_\xi\tilde{\phi}=I+O(\veps)$ is nonsingular by the chain rule and \eqref{genfcnnondeg}.

Now make the following definitions
\[
\tilde{q}^j_{\mu,\omega}(t,s,x,\xi) := q^j_{\mu,\omega}(t,s,2^jx,2^j\xi), \qquad \tilde{q}^j_{\mu',\omega'}(t,s,x,\xi) := q^j_{\mu',\omega'}(t,s',2^jx,2^j\xi),
\]
\begin{align}
 T_1u_1(t,x) &:= \int e^{\frac{i2^{2j}}{h}\tilde{\phi}(t,s,x,\xi)}\tilde{q}^j_{\mu,\omega}(t,s,x,\xi)u_1(\xi)\,d\xi,\label{T1def}\\
 T_2u_2(t,x) &:= \int e^{\frac{i2^{2j}}{h}\tilde{\phi}(t,s',x,\eta)}\tilde{q}^j_{\mu',\omega'}(t,s',x,\eta)u_2(\eta)\,d\eta.\label{T2def}
\end{align}
so that after the symbol replacement above, we have that up to acceptable error,
\[
R\left((C^j_{\mu}(s)\circ B_{\mu,\omega}^j(s))f,(C^j_{\mu'}(s')\circ B_{\mu',\omega'}^j(s'))g\right)(t,2^jx) = T_1u_1(t,x)\cdot T_2u_2(t,x)
\]
The bound \eqref{bilinearcj} will thus follow by dilating variables $x \mapsto 2^jx$ and the bound
\begin{equation}\label{bilineardilated}
 \left\|T_1 u_1 T_2 u_2\right\|_{L^{\frac q2}([-\veps,\veps]_t\times \RR^{d-1}_x)} \lesssim (h2^{-2j})^{\frac{2d}{q}} \|u_1\|_{L^2(\RR^{d-1})} \|u_2\|_{L^2(\RR^{d-1})}.
\end{equation}

To show \eqref{bilineardilated}, we will use bilinear estimates.  When $d=2$, these are implicit in the work of H\"ormander \cite{HorOsc}  and when $d \geq 3$, they are a consequence of results of S. Lee \cite[Theorem 1.1]{leebilinear} and the epsilon removal lemma in \cite[Theorem 3.3]{blairsogge}.  To align the notation here with the latter works, define
\[
 \delta(t,x,\xi,\eta) = (d_\xi\tilde{p})(t,x,d_x\tilde{\phi}(t,s,x,\xi))-(d_\eta\tilde{p})(t,x,d_x\tilde{\phi}(t,s',x,\eta)).
\]
Since the Hessian $\prtl^2_{\eta_i\eta_j} \tilde{p}(t,x,\eta) = (\prtl^2_{\eta_i\eta_j} p)(t,2^j x,2^j \eta)$ is positive definite,
\[
| \delta(t,x,\xi,\eta)| \approx |d_x\tilde{\phi}(t,s,x,\xi)-d_x\tilde{\phi}(t,s',x,\eta)|.
\]
The crucial hypothesis to be verified to apply these bounds is that the following expression is uniformly bounded from below when $(2^jx,2^j\xi) \in \supp(q^j_{\mu}(s,\cdot))$ and $(2^jx,2^j\eta) \in \supp(q^j_{\mu'}(s',\cdot))$,
\[
 \left| \left\langle \left[ d_x d_\xi \tilde{\phi}\right] \delta(t,x,\xi,\eta),\left[ d_x d_\xi \tilde{\phi}\right]^{-1} \left[d_\xi d_\xi \tilde{p}(t,x,d_x\tilde{\phi}) \right]^{-1}\delta(t,x,\xi,\eta)\right\rangle \right|,
\]
where the derivatives of $d_x\tilde{\phi}$, $d_x d_\xi \tilde{\phi}$ are either both evaluated at $(t,s,x,\xi)$ or both evaluated at $(t,s',x,\eta)$. Given that $d_x d_\xi\tilde{\phi}=I + O(\veps)$, it suffices to see that
\begin{equation}\label{phiseparation}
 \left|d_x\tilde{\phi}(t,s,x,\xi)-d_x\tilde{\phi}(t,s',x,\eta) \right| \approx 1.
\end{equation}

To see \eqref{phiseparation}, note that by \eqref{symbolsupp}, $(x,\xi) \in \supp(\tilde{q}^j_\mu(s,\cdot))$ implies that
\[
\left(d_\xi\phi(t,s,2^j x, 2^j\xi),2^j\xi \right) \in \supp(c^j_\mu(s,\cdot)) = \kappa_{s,0}\left(\supp(c^j_\mu(0,\cdot))\right).
\]
Applying $\kappa_{t,s}$ to both sides of this, we see that by \eqref{generatingfunction} this is equivalent to
\[
\left(2^j x, d_x\phi(t,s,2^j x, 2^j\xi)\right) \in \kappa_{t,0}\left(\supp(c^j_\mu(0,\cdot))\right).
\]
The same reasoning shows that if $(2^j x,2^j \eta) \in \supp(q^j_{\mu'}(s',\cdot))$, then
\[
\left(2^j x, d_x\phi(t,s',2^j x, 2^j\eta)\right) \in \kappa_{t,0}\big(\supp(c^j_{\mu'}(0,\cdot))\big).
\]
By \eqref{cjseparation}, this gives
$
\left|d_x\phi(t,s,2^j x, 2^j\xi) -d_x\phi(t,s',2^j x, 2^j\eta)\right| \approx 2^j,
$
and \eqref{phiseparation} follows.

\subsection{Linear estimates}
Here we prove Lemma \ref{lem:linearbounds}, using the rescaling from the previous subsection \S\ref{ss:bilinearestimates}.  Define $\tilde{a}_{\tilde{\mu}}^j(t,s,x, \xi) :=a_\mu^j(t,s,2^j x,2^j \xi)$, let $u_1$ be as in \eqref{uidef} but replacing $B_{\mu,\omega}^j(s)f$ by $f$,  and let $T_1$ be the operator with Schwartz kernel given by the oscillatory integral
\[
\int e^{\frac{i2^{2j}}{h} (\tilde{\phi}(t,s,x,\xi)-\langle y,\xi \rangle)} \tilde{a}_{\tilde{\mu}}^j(t,s,x, \xi)\, d\xi .
\]
We thus have that $\tilde{a}_{\tilde{\mu}}^j(t,s,\cdot)\in S_0(1)$, with
\begin{equation}\label{ajtildedecay}
|\prtl^\alpha_{x,\xi}\tilde{a}_{\tilde{\mu}}^j(t,s,\cdot)| \lesssim_\alpha
\langle x-\tilde{x}_{t,0}(\tilde{\mu})\rangle^{-2d}\langle \xi-\tilde{\xi}_{s,0}(\tilde{\mu})\rangle^{-2d}, \quad \tilde \mu = 2^{-j}\mu
\end{equation}
writing $\tilde{\kappa}_{r,0}(\tilde{\mu})$ in terms of components $(\tilde{x}_{r,0}(\tilde{\mu}),\tilde{\xi}_{r,0}(\tilde{\mu}))$.  It now suffices to show
\begin{equation}\label{dilatedlinearbound}
\|T_1 u_1\|_{L^q([-\veps,\veps]_t \times \RR^{d-1}_x)} \lesssim
(h2^{-2j})^{\frac{d-1}{2}(\frac 12+\frac 1q)}\|u_1\|_{L^2(\RR^{d-1}_y)}.
\end{equation}
Indeed, given \eqref{dilatedplancherel}, the right hand side is $\approx h^{-\frac{d-1}{2}(\frac 12-\frac 1q)}2^{-\frac{j(d-1)}{q}}\|f\|_{L^2}$, and the loss in $2^j$ cancelled by reverting back to the original coordinates on the left hand side.  This is the same gain observed in the first inequality in \eqref{linearbounds}, but with frequency $h^{-1}2^{2j}$.  The main observation is that $T_1$ is an oscillatory integral operator with a Carleson-Sj\"olin phase as defined in \cite[\S2.2]{soggefica}.   Indeed,
\begin{equation}\label{tildenonsing}
d_x d_\xi \tilde{\phi}(t,s,x,\xi) =(d_x d_\xi \phi)(t,s,2^j x,2^j \xi),
\end{equation}
and the latter defines a nonsingular $(d-1)\times (d-1)$ matrix.  Recalling \eqref{tildeeikonal}, it suffices to check that for $(t,x)\in \supp(\tilde{q}^j_\mu(\cdot,s,\cdot,\xi))$ fixed
\begin{equation}\label{ptildegraph}
\left\{(-\tilde{p}(t,x,d_x\tilde{\phi}(t,s,x,\xi)), d_x\tilde{\phi}(t,s,x,\xi)): \xi \in \supp(\tilde{q}^j_\mu(t,s,x,\cdot)) \right\}
\end{equation}
is an embedded hypersurface in $T_{(t,x)}^*\RR^{d}$ with nonvanishing principal curvatures.  But this follows from the fact that this is a local reparameterization of the graph of $\eta\mapsto -\tilde{p}(t,x,\eta)$ and $\prtl^2_{\eta_i\eta_j} \tilde{p}(t,x,\eta) = (\prtl^2_{\eta_i\eta_j} p)(t,2^j x,2^j \eta)$.

At this point, \eqref{dilatedlinearbound} formally follows from the standard theory of Carleson-Sj\"olin integrals due to H\"ormander and Stein (see e.g. \cite[Theorem 2.2.1]{soggefica}).  However, care must be taken here since the amplitude $\tilde{a}_{\tilde{\mu}}^j$ is not compactly supported in a set of uniform diameter. However, it does decay rapidly outside such a set, which is enough.  Seeing this is a matter of tracing through the proof in \cite[Theorem 2.2.1(1)]{soggefica}, which we do here.  We first observe that given \eqref{tildenonsing}, the bound \eqref{dilatedlinearbound} with $q=2$ is a consequence of H\"ormander's $L^2$ theorem \cite{HorOsc} and the decay in \eqref{ajtildedecay}.  Indeed, this decay allows us to write the amplitude as a sum
\[
\tilde{a}_{\tilde{\mu}}^j(t,s,x,\xi) = \sum_{\tilde{\nu} =(\tilde{\nu}_x,\tilde{\nu}_\xi)\in \mathbb{Z}^{2(d-1)}} \big\langle \tilde{\nu}_x-\tilde{x}_{t,0}(\tilde{\mu}) \big\rangle^{-2d} \big\langle \tilde{\nu}_\xi-\tilde{\xi}_{s,0}(\tilde{\mu}) \big\rangle^{-2d} \tilde{a}_{\tilde{\mu},\tilde{\nu}}^j(t,s,x,\xi),
\]
and we may take $\tilde{a}_{\tilde{\mu},\tilde{\nu}}^j(t,s,\cdot) \in S_0(1)$ with
$\supp(\tilde{a}_{\tilde{\mu},\tilde{\nu}}^j(t,s,\cdot)) \subset \{|(x,\xi)-\tilde{\nu}| \lesssim 1\}$.
For fixed $t$, H\"ormander's theorem can now be applied in $L^2(\RR^{d-1})$ to each operator determined by the decomposition here, and the decay in $\tilde{\nu}$ ensures that we may sum over the contribution of each of these operators without loss.  This results in
\begin{equation*}
  \|T_1\|_{L^2(\RR^{d-1}) \to L^{2}([-\veps,\veps] \times \RR^{d-1})} = O((h2^{-2j})^{\frac{d-1}{2}}).
\end{equation*}

It now suffices to prove \eqref{dilatedlinearbound} with $q=\frac{2(d+1)}{d-1}$, that is,
\begin{equation*}
  \|T_1\|_{L^2(\RR^{d-1}) \to L^{\frac{2(d+1)}{d-1}}([-\veps,\veps] \times \RR^{d-1})} = O((h2^{-2j})^{\frac{d(d-1)}{2(d+1)}}).
\end{equation*}
Defining the fixed $t$ operator $(T_1^tu_1)(x) = (T_1u_1)(t,x)$, a duality and fractional integration argument along with the aforementioned $L^2$ bounds means it suffices to show that
\[
\|T_1^t(T_1^{\tilde{t}})^*\|_{L^1(\RR^{d-1}) \to L^\infty(\RR^{d-1})} \lesssim (h2^{-2j}|t-\tilde{t}|)^{-\frac{d-1}{2}}
\]
which is a consequence of
\[
\left|\int e^{\frac{i2^{2j}}{h}(\tilde{\phi}(t,s,x,\xi)-\tilde{\phi}(\tilde{t},s,\tilde{x},\xi))}\tilde{a}_{\tilde{\mu}}^j(t,s,x,\xi)
\overline{\tilde{a}^j_{\tilde{\mu}}(\tilde t,s,\tilde{x},\xi)}\,d\xi\right| \lesssim (h2^{-2j}|t-\tilde{t}|)^{-\frac{d-1}{2}}.
\]
As in \cite[\S2.2]{soggefica}, this in turn follows from the fact that \eqref{ptildegraph} ensures that any critical points in the phase are nondegenerate.  As before, this requires some care since the integration is not over a set of uniform size, but this can be overcome by using a partition of unity adapted to cubes of sidelength 1 in $\xi$, then using the decay in $\xi$ away from $\tilde{\xi}_{s,0}(\tilde{\mu})$.


\begin{thebibliography}{99}
\bibitem{abrahammarsden} Abraham, R; Marsden, J.E. {\em Foundations of Mechanics}, second edition. Benjamin/Cummings Publishing Co., Reading, Mass., 1978.
\bibitem{Berard}  B\'erard, P., {\em On the wave equation on a compact Riemannian manifold without conjugate points}, Math. Z.  {\bf 155} (1977), no. 3,  249-276.
\bibitem{blairsogge} Blair, M. D.; Sogge, C. D. {\em On Kakeya-Nikodym Averages, $L^p$-norms and lower bounds for nodal sets of eigenfunctions in higher dimensions}. J. Eur. Math. Soc. \textbf{17} (2015) 2513-2543.
\bibitem{blairsoggerefined} Blair, M. D.; Sogge, C. D. {\em Refined and Microlocal Kakeya-Nikodym Bounds of Eigenfunctions in two dimensions.} Anal. PDE \textbf{8} (2015), 747-764.

\bibitem{blairsoggetop} Blair, M. D.; Sogge, C. D. {\em Toponogov's theorem and improved estimates of eigenfunctions.} To appear, J. Differential Geom.
\bibitem{bourgain} Bourgain, J. \emph{Geodesic restrictions and $L^p$-estimates for eigenfunctions of Riemannian surfaces}. Linear and complex analysis, Amer. Math. Soc. Transl. {\bf 226} (2009),  Amer. Math. Soc., Providence, RI,  27-35.

\bibitem{christiansonmonodromy} Christianson, H. \emph{Quantum monodromy and nonconcentration near a closed semi-hyperbolic orbit}. Trans. Amer. Math Soc. {\bf 363} (2011), no. 7, 3373-3438.

\bibitem{HT} Hassell, A; Tacy, M. {\em  Improvement of eigenfunction estimates on manifolds of nonpositive curvature}.  Forum Math \textbf{27} (2015), no. 3, 1435-1451.

\bibitem{HR} Hezari, H; Rivi\`ere, G. {\em $L^p$ norms, nodal sets, and quantum ergodicity}. Adv. Math \textbf{290} (2016), 938-966.

\bibitem{HorOsc} H\"ormander, L. \emph{Oscillatory integrals and multipliers on $FL^p$}. Ark. Math. {\bf 11} (1973), 1-11.

\bibitem{ktzsemi} Koch, H.; Tataru, D.; Zworski, M. \emph{Semiclassical $L^p$ estimates}. Annales Henri Poincar\'e {\bf 8} (2007), 885-916.
\bibitem{leebilinear} Lee, S. \emph{Linear and bilinear estimates for oscillatory integral operators related to restriction to hypersurfaces}.   J. Funct. Anal. {\bf 241} (2006),  56--98.

\bibitem{sogge88} Sogge, C.D. \emph{Concerning the $L^p$ norm of spectral clusters for second-order elliptic operators on compact manifolds.} J. Funct. Anal. {\bf 77}  (1988),  123--138.
\bibitem{soggefica} Sogge, C.D. \emph{Fourier integrals in classical analysis}. Cambridge Tracts in Math., Cambridge Univ. Press, Cambridge, 1993.

\bibitem{soggeforthcoming} Sogge, C.D.  \emph{Problems related to the concentration of eigenfunctions}. Preprint.
\bibitem{soggekaknik} Sogge, C.D.  \emph{Kakeya-Nikodym averages and $L^p$-norms of eigenfunctions}. Tohoku Math. J. {\bf 63} (2011),  519-538.
\bibitem{soggetothzelditch} Sogge, C.D.; Toth, J.; Zelditch, S. {\em About the blowup of quasimodes on Riemannian manifolds.} J. Geom. Anal. \textbf{2011}, 21 (1), 150-173.


\bibitem{soggezelduke} Sogge, C.D.; Zelditch, S. \emph{Riemannian manifolds with maximal eigenfunction growth}. Duke Math. J. {\bf 114} (2002), 387--437.

\bibitem{soggezelfocal} Sogge, C.D.; Zelditch, S. \emph{Focal points and sup-norms of eigenfunctions}. Rev. Mat. Iberoam. {\bf 32} (2016), no. 3, 971-994.

\bibitem{taobilinear} Tao, T. \emph{A sharp bilinear restriction estimates for paraboloids}.  Geom. Funct. Anal. {\bf 13}  (2003), 1359-1384.
\bibitem{tvv} Tao, T.; Vargas, A.; Vega, L. \emph{A bilinear approach to the restriction and Kakeya conjectures}. J. Amer. Math. Soc. {\bf 11}  (1998), 967--1000.

\bibitem{taylorpdo} Taylor, M. E. \emph{Pseudodifferential operators}. Princeton Mathematical Series, Princeton Univ. Press, Princeton, NJ, 1981.


\bibitem{wolff} Wolff, T. \emph{A sharp cone restriction estimate}.  Ann. of Math. {\bf 153}  (2001), 661--698.

\bibitem{zworski} Zworski, M. \emph{Semiclassical Analysis}. Graduate Studies in Mathematics {\bf 138}, Amer. Math. Soc., Providence, 2012.


\end{thebibliography}
\end{document}